\theoremstyle{plain}
\newtheorem{theorem}{Theorem}[section]
\newtheorem{lemma}[theorem]{Lemma}
\newtheorem{proposition}[theorem]{Proposition}
\newtheorem{corollary}[theorem]{Corollary}
\theoremstyle{remark}
\newtheorem{definition}[theorem]{Definition}
\newcommand{\T}{^\top}
\newcommand*{\TV}{\operatorname{\operatorname{TV}}}
\renewcommand*{\PP}{\mathbb{P}}
\newcommand*{\var}{\operatorname{Var}}
\begin{document}

\begin{frontmatter}
\title{Non-asymptotic bounds for the $\ell_{\infty}$ estimator in linear regression with uniform noise}
\runtitle{Non-asymptotic bounds for the $\ell_{\infty}$ estimator}

\begin{aug}
\author[B]{\inits{Y.}\fnms{Yufei}~\snm{Yi}\ead[label=e1]{yy544@fb.com}}
\author[B]{\inits{M.}\fnms{Matey}~\snm{Neykov}\ead[label=e2]{mneykov@stat.cmu.edu}}

\address[B]{Department of Statistics \& Data Science,
Carnegie Mellon University, Pittsburgh, USA\printead[presep={,\ }]{e1,e2}}
\end{aug}

\begin{abstract}
The Chebyshev or $\ell_{\infty}$ estimator is an unconventional alternative to the ordinary least squares in solving linear regressions. It is defined as the minimizer of the $\ell_{\infty}$ objective function 
\begin{align*}
    \hat\bbeta := \argmin_{\bbeta} \|\bY - \Xb\bbeta\|_{\infty}.
\end{align*}
The asymptotic distribution of the Chebyshev estimator under fixed number of covariates was recently studied \citep{knight2017asymptotic}, yet finite-sample guarantees and generalizations to high-dimensional settings remain open. In this paper, we develop non-asymptotic upper bounds on the estimation error $\|\hat\bbeta-\bbeta^*\|$ for a Chebyshev estimator $\hat\bbeta$, in a regression setting with uniformly distributed noise $\varepsilon_i\sim U([-a,a])$ where $a$ is either known or unknown. With relatively mild assumptions on the (random) design matrix $\Xb$, we can bound the error rate by $C_p/n$ with high probability, for some constant $C_p$ depending on the dimension $p$ and the law of the design. Furthermore, we illustrate that there exist designs for which the Chebyshev estimator is (nearly) minimax optimal. On the other hand we also argue that there exist designs for which this estimator behaves sub-optimally in terms of the constant $C_p$'s dependence on $p$. Finally, we show that ``Chebyshev's LASSO'' has advantages over the regular LASSO in high dimensional situations, provided that the noise is uniform. Specifically, we argue that it achieves a much faster rate of estimation under certain assumptions on the growth rate of the sparsity level and the ambient dimension with respect to the sample size.
\end{abstract}

\begin{keyword}
\kwd{Chebyshev estimator}
\kwd{Chebyshev's LASSO}
\kwd{Linear Model}
\kwd{Uniform distribution}
\end{keyword}

\end{frontmatter}

\section{Introduction}

The goal of this paper is to analyze the non-asymptotic behavior of the Chebyshev estimator (and some of its close relatives) in a linear model with uniformly distributed errors. Concretely, suppose we have $n$ independent and identically distributed (i.i.d.) observations of the following model $Y_i = \bX_i\T \bbeta^* + \varepsilon_i$ where $\bX_i \in \RR^p$ are covariates, and $\varepsilon_i \sim U([-a,a])$ for some $a > 0$ which may be either known or unknown. Throughout the paper we will additionally assume that $\bX_i$ is independent of $\varepsilon_i$. A natural (although unconventional) estimator of $\bbeta^*$ is the Chebyshev (also known as $\ell_{\infty}$ or minimax) estimator which is defined through:
\begin{align}\label{Chebyshev:estimator:intro}
\hat \bbeta := \argmin_{\bbeta} \max_{i \in \{1,\ldots,n\}} |Y_i - \bX_i\T \bbeta|.\footnotemark
\end{align}
\footnotetext{Formally, we should write $\hat \bbeta \in \argmin_{\bbeta} \max_{i \in \{1,\ldots,n\}} |Y_i - \bX_i\T \bbeta|$, as the minimizer may not be unique. However since our results are valid for any point in this set we abuse the notation slightly.}Compared to ordinary least squares (OLS), the Chebyshev estimator minimizes the $\ell_{\infty}$ rather than the $\ell_2$ norm of the estimated residuals. The motivation of \eqref{Chebyshev:estimator:intro} stems from the fact that this is the MLE when the noise is known to be uniform on a bounded interval $U([-a,a])$ where the value of $a$ is unknown (see also Section \ref{main:section} for this simple calculation). It is easy to see that \eqref{Chebyshev:estimator:intro} can be conveniently solved through a linear program. Alternatively, there exist iteratively reweighted least squares schemes, originally due to Lawson \citep{lawson1961contribution, cline1972rate}, which can be shown to converge to the solution of \eqref{Chebyshev:estimator:intro} at a linear rate. Intuitively, \eqref{Chebyshev:estimator:intro} will be a good estimator of $\bbeta^*$ when the noise has bounded support with non-zero probability mass near the boundary or the noise is concentrated on a bounded interval with very thin tails outside the interval. In contrast, when the noise is of unbounded support or whenever there is a negligible probability of the noise being near the boundary the Chebyshev estimator might have poor performance or may be even inconsistent. Importantly, observe that the $\ell_\infty$ estimator \eqref{Chebyshev:estimator:intro} is not a linear estimator of the observations, and therefore Gauss-Markov's theorem is not applicable --- which leaves the door open for the Chebyshev estimator to dominate OLS on some occasions. We will verify that this is indeed the case. 

Apart from being a cute mathematical problem, regression with uniform errors can be motivated in problems where the error is naturally bounded. For instance if the observations undergo some physical measurement process (such as measuring weight on a scale) it may be natural to assume that the error has bounded support. Although one may argue that uniform distribution is not necessarily the most natural bounded distribution, we find it enlightening to study this model, in part because the uniform distribution is naturally related to the order statistics of any continuous distribution. As we shall see the order statistics play a big role in our non-asymptotic analysis of the performance of the Chebyshev estimator, and therefore we believe the methods we develop here are (much) more broadly applicable. In fact all of our proofs can readily be extended to continuous, symmetric, bounded noise with almost no efforts (see Remark \ref{remark:extension}). Finally, it is also possible to extend the results to cases with asymmetric noise, at the cost of a slightly cumbersome argument on symmetrizing the observations (and critical inequalities) one considers in the proofs. We avoid doing this here in order to keep our exposition as clean as possible. 

\subsection{Related work and contributions}

 Although the Chebyshev estimator is not extensively used in practice, there certainly has been some interest coming from various fields. In particular it has found applications in the physical and environmental sciences \citep{james1983fitting, james1983probability, brenner2002aeroservoelastic, zolghadri2004minimax, bertsch2005fitting, qi2015theoretical}, finance \citep{jaschke1997arbitrage, jaschke2001coherent}, and there is also a considerable literature in signal processing on estimation with bounded noise \cite[see][e.g., and references therein]{milanese1982estimation, tse1993optimal, akccay1996choice, alecu2006wavelet, beck2007regularization}. In addition  there is a lot of literature on Chebychev's estimation, dealing with the computational aspects of these estimators using linear programming and numerical analysis \citep{appa19731, sielken1973two, hand1980using, armstrong1980dual, sklar1982least}. Recent statistical studies of the Chebyshev estimator include \citep{castillo2009combined, knight2017asymptotic, berenguer2019models, du2019optimal}. Of note \cite{du2019optimal}, present a high-dimensional problem in composite fuselage assembly where a regularized Chebyshev estimator is a natural choice (i.e. the $\ell_{\infty}$ loss seems more natural in this problem compared to other standard loss functions such as the $\ell_2$ and $\ell_1$ losses). \cite{du2019optimal} provide some statistical guarantees about a dual version of what we call Chebyshev's LASSO below (see \eqref{chebyshev_lasso_est}), assuming that the noise is sub-Gaussian, but under strong assumptions on the design matrix, and most importantly they fail to recognize that if the noise is uniform this estimator will actually outperform the LASSO in terms of the estimation convergence rate (see Theorem \ref{theorem:l2_bound_chebyshev_lasso}). 

%

Remarkably, even though the Chebyshev estimator has been around for a long time, partly as folklore knowledge, studies of its behavior have been very limited. We attribute this fact to the relatively complicated form of the loss function which is non-smooth. To our knowledge the first proper attempts at characterizing the rates of convergence of the Chebyshev estimator (in a regression setting) are due to \citep{robbins1986maximum, schechtman1986estimating}. These authors used very clever ideas, to derive the rate of convergence of the Chebyshev estimator in a simple linear regression case. Both papers observed the ``super-efficient'' behavior of the Chebyshev estimator in comparison to the OLS. The case with more than one covariate remained unsolved for nearly 30 years, and in 2020 Keith Knight in a breakthrough preprint (which seems to first have been released in 2010 and later revised in 2017 and 2020) derived the exact asymptotic distribution of the Chebyshev estimator with a fixed (but potentially bigger than one) number of covariates \citep{knight2017asymptotic}. The author showed, that the Chebyshev estimator converges to its target at a $n^{-1}$ rate (in the uniform noise case), which should be contrasted to the much slower $n^{-1/2}$ rate for the OLS. The asymptotic distribution however is complicated, and non-pivotal, which means that it cannot be used to perform inference. 
While being a landmark, the work of \cite{knight2017asymptotic} left a lot to be desired. For one, the rate of convergence to the asymptotic distribution is unknown. This means that finite sample results which hold with high probability cannot be extracted easily from the main result of \cite{knight2017asymptotic}. In addition, due to the complicated form of the asymptotic distribution, it is not straightforward to derive the dependence on the dimension in the rate of convergence for the estimator. This paper proposes a novel non-asymptotic approach, which is able to derive finite-sample guarantees, and in addition can be used to give a rough upper bound for the dependence on the dimension in the convergence rate. In addition, we formalize and analyze Chebyshev's LASSO, which extends the Chebyshev estimator to high-dimensional settings by incorporating an $\ell_1$-penalty. We demonstrate that Chebyshev's LASSO can be much more efficient than the regular LASSO in models where the noise is uniform under certain assumptions. 

\subsection{Organization}

The paper is structured as follows. In Section \ref{main:section} we record our main results on the Chebyshev estimator (and its relatives). Subsection \ref{simple:analysis:section} uses a simple analysis which as we argue captures a multitude of random designs, while Subsection \ref{minimax:bound:section} derives a minimax lower bound for the problem. In Section \ref{LASSO:section} we state our main result for Chebyshev's LASSO, which illustrates that Chebyshev's LASSO can be much more accurate than the regular LASSO under certain assumptions. We provide brief numerical results in support of our theoretical findings in Section \ref{sim:section}. Section \ref{discussion:section} is dedicated to a brief discussion.

\subsection{Notation}


We use $\lesssim$ and $\gtrsim$ to mean $\leq$ and $\geq$ up to positive universal constants. By convention for any integer $n \in \NN$ we set $[n] = \{1,\ldots, n\}$. We use $\mathbb{B}^{p}_2$ to denote the $p$-dimensional Euclidean ball, while $\mathbb{S}^{p-1}$ to denote the $p$-dimensional Euclidean sphere. For a vector $\vb \in \RR^p$ we denote $\|\vb\|_q = [\sum_{i \in [p]} |\vb^{(i)}|^q]^{1/q}$ its $\ell_q$-th norm (with the usual extension for $q=\infty$), and we use $\|\cdot\|$ as a shorthand for $\|\cdot\|$. For two vectors $\vb, \wb \in \RR^p$ we denote their dot product with either $\vb\T \wb$ or with $\langle \vb,\wb\rangle$. For a matrix $\Ab\in\mathbb{R}^{n\times p}$ we use $\|\Ab\|_{\max}$ to denote the largest absolute value of all its entries, and use $\|\Ab\|_{\infty}$ to denote the largest $\ell_1$ norm of all its rows such that $\|\Ab\|_{\infty} = \max_{i\in n} \sum_{j\in p}|\Ab_{ij}|$. We denote the operator norm of a matrix $\Ab$ with $\|\Ab\|_{\operatorname{op}}$.

\section{Linear model with uniform noise}\label{main:section}
Consider a linear model 
\begin{align}
\label{linear_model_unif_error}
    \bY = \Xb\bbeta^* + \bvarepsilon, \quad\varepsilon_i\sim U([-a,a]),
\end{align}
where $a>0$ is a known constant. Here $\bY$ is a vector of $n$ outcome values, $\Xb$ is an $n \times p$ matrix whose rows are the covariates and $\bvarepsilon$  is the vector of the error terms (which we assume is independent of $\Xb$). OLS is probably the most commonly used method to estimate $\bbeta^*$ in linear models, with an estimation rate $\|\hat\bbeta - \bbeta^*\|\sim O(\sqrt{p/n})$ when $p$ is changing with $n$, and $\bX \sim N(0,\Ib)$ \citep[see equation (18)]{mourtada2019exact}.
Since in our problem the noise $\bvarepsilon$ is bounded, by incorporating this information, we expect that the following constrained optimization
\begin{align}
\hat\bbeta = \argmin_{\bbeta} \frac{1}{n}\sum_{i \in [n]} (Y_i - \bX_i\T \bbeta)^2 \mbox{ s.t. } \label{least:squares:under:constraint} \\
 Y_i - a\leq \bX_i\T \bbeta \leq Y_i + a ~\forall i \in [n],\label{main:optimization:problem}
\end{align}
may give a better estimation of $\bbeta^*$ compared to the OLS. Clearly \eqref{least:squares:under:constraint} given \eqref{main:optimization:problem} can be solved via quadratic programming.  In addition, one could consider the best risk equivariant estimator in this problem, which is given by the centroid of the constraint set in \eqref{main:optimization:problem} \cite[see equation (1.5) and also references therein]{jureckova2009minimum}, although it may be hard to calculate it in practice \citep{rademacher2007approximating}. 
Our analysis will simultaneously cover both estimators considered above. In fact our analysis covers any estimator taking values in the set \eqref{main:optimization:problem}. 

We now consider the situation when $a$ is unknown. In this case, none of the two proposed estimators can be implemented since both of them rely on the knowledge of $a$. A natural approach would be to obtain the MLE. The likelihood function is
\begin{align*}
\mathcal{L}(a, \bbeta) = 1/(2a)^n \prod_{i \in [n]} \mathbbm{1}(|Y_i - \bX_i\T \bbeta| \leq a).
\end{align*}
Hence the MLE of $a$ and $\bbeta^*$ is given by the following linear program (where the inequalities are entrywise):
\begin{align*}
\min a,\\
\bY \leq a + \Xb \bbeta,\\
\Xb \bbeta - a \leq \bY.
\end{align*}
Clearly, this is equivalent to minimizing the loss function $\|\bY-\Xb\bbeta\|_{\infty}$. Thus the MLE of $\bbeta^*$ is given by
\begin{align}
\label{chebyshev_est}
    \hat\bbeta := \argmin_{\bbeta} \|\bY-\Xb\bbeta\|_{\infty},
\end{align}
which is also called the $\ell_{\infty}$ estimator or the Chebyshev estimator. Consequently, $a$ can be estimated by
\begin{align*}
    \hat a = \|\bY-\Xb\hat\bbeta\|_{\infty}.
\end{align*}
Observe that trivially we must have $\hat a \leq a$. This implies that when $\hat \bbeta$ is the Chebyshev estimator, it also satisfies \eqref{main:optimization:problem} even though $a$ is unknown. As we mentioned previously, all results below will be valid for any estimator $\hat \bbeta$ which takes values in the set \eqref{main:optimization:problem}, hence they are automatically valid for the Chebyshev estimator as well.

Let $\eta_i = -\sign(\varepsilon_i),\,\,i\in[n]$ be independent Rademacher random variables which are also independent from $\bX_i$ and $|\varepsilon_i|$. Let
$$\tilde\bX_{i} = \eta_i\bX_{i}.$$
We will now introduce the concept of a \textit{critical inequality} given in \eqref{critical_ineq_idv}. 

\begin{lemma}
\label{lemma:critical_ineq}
From \eqref{main:optimization:problem} one can deduct the inequality
\begin{align}\label{critical_ineq_idv}
\tilde\bX_{i}\T (\hat\bbeta - \bbeta^*) \leq a - |\varepsilon_{i}|.
\end{align}
\end{lemma}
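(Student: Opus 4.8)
The proof is a direct manipulation of the feasibility constraint \eqref{main:optimization:problem}, so my plan is short. First I would evaluate \eqref{main:optimization:problem} at $\hat\bbeta$ and substitute $Y_i = \bX_i\T\bbeta^* + \varepsilon_i$. The right-hand inequality $\bX_i\T\hat\bbeta \le Y_i + a$ becomes
\begin{align*}
\bX_i\T(\hat\bbeta - \bbeta^*) \le a + \varepsilon_i,
\end{align*}
while the left-hand inequality $Y_i - a \le \bX_i\T\hat\bbeta$ becomes $\bX_i\T(\hat\bbeta - \bbeta^*) \ge \varepsilon_i - a$, equivalently
\begin{align*}
-\bX_i\T(\hat\bbeta - \bbeta^*) \le a - \varepsilon_i.
\end{align*}

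Next I would merge these two bounds into the single claimed one using $\eta_i = -\operatorname{sign}(\varepsilon_i)$ and $\tilde\bX_i = \eta_i\bX_i$, by a one-line case split on the sign of $\varepsilon_i$. If $\varepsilon_i \le 0$ then $\eta_i = 1$ and $|\varepsilon_i| = -\varepsilon_i$, so $\tilde\bX_i\T(\hat\bbeta - \bbeta^*) = \bX_i\T(\hat\bbeta - \bbeta^*) \le a + \varepsilon_i = a - |\varepsilon_i|$; if $\varepsilon_i > 0$ then $\eta_i = -1$ and $|\varepsilon_i| = \varepsilon_i$, so $\tilde\bX_i\T(\hat\bbeta - \bbeta^*) = -\bX_i\T(\hat\bbeta - \bbeta^*) \le a - \varepsilon_i = a - |\varepsilon_i|$. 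Either way this is exactly \eqref{critical_ineq_idv}. (The event $\varepsilon_i = 0$ is null under $U([-a,a])$, and in any case the bound is trivial there since its right-hand side is $a$ and both displayed inequalities already give $\pm\bX_i\T(\hat\bbeta - \bbeta^*)\le a$.)

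There is no genuine obstacle here; the one point worth stressing is what the argument does \emph{not} use: it never invokes optimality of $\hat\bbeta$, only that $\hat\bbeta$ is feasible for \eqref{main:optimization:problem}. As observed in the text, the Chebyshev estimator \eqref{chebyshev_est} is automatically feasible because $\hat a \le a$, so \eqref{critical_ineq_idv}, and every later consequence drawn from it, applies verbatim to the Chebyshev estimator as well as to the constrained least squares and centroid estimators.
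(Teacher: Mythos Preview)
Your proof is correct and follows essentially the same approach as the paper: substitute $Y_i = \bX_i\T\bbeta^* + \varepsilon_i$ into the feasibility constraint \eqref{main:optimization:problem}, then select the tighter of the two resulting inequalities via a case split on the sign of $\varepsilon_i$. The paper additionally remarks that $-\sign(\varepsilon_i)$ is independent of $|\varepsilon_i|$ (a fact used downstream), but the core derivation is the same as yours.
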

Although we use the term \emph{critical inequality} to refer to any inequality of the type \eqref{critical_ineq_idv}, we will actually only use these inequalities for which the $|\varepsilon_i|$ value happens to be close to $a$. This justifies the term critical, as the right hand side of such an inequality is very close to $0$. Hence, if we are lucky enough and $\tilde \bX_i\T (\hat \bbeta - \bbeta^*)$ is not too small, a critical inequality will yield that $\tilde \bX_i\T (\hat \bbeta - \bbeta^*) \approx 0$. If we have many such approximate identities, it should be the case that $\hat \bbeta \approx \bbeta^*$. While this is not exactly how our analysis proceeds, we hope this gives a good intuition why critical inequalities may be useful. It is also worth stressing the fact that $\tilde\bX_{i}$ is sign symmetric regardless of the distribution of $\bX_i$. In the next section, we will present a simple way of obtaining bounds on the $\ell_2$ estimation error $\|\hat \bbeta - \bbeta^*\|$. Although initially it may seem that the restrictions we impose on the random design are somewhat severe, contrarily, through examples we show that there is a multitude of designs which obey these assumptions.

\subsection{A simple non-asymptotic analysis of estimators taking values in \eqref{main:optimization:problem}}\label{simple:analysis:section}

The high level intuition of the analysis we give in this section is very simple. First, note that due to the nature of the uniform distribution, there will be a significant proportion of critical inequalities whose right hand side will be close to $0$. Suppose now that we are able to establish that there exists a ``reasonably large'' $\mathbf{0}$-centered $\ell_2$-ball inside the convex hull of the $\tilde \bX_i$, for indices $i$ which correspond to the critical inequalities which are close to $0$. This will automatically mean that the $\|\hat \bbeta - \bbeta^*\|$ has to be bounded by the largest deviation from $0$ in the considered critical inequalities. Formally, we have:

\begin{theorem}\label{ball:in:convex:hull:follows:rate} Suppose that the design $\bX_i \in \RR^p$ is random and is independent of the noise $\varepsilon_i$. Let $f(p,\gamma)$ be a known function of the dimension and the scalar $\gamma > 0$. Assume that the design is such that for any integer $m \geq f(p,\gamma)$, and an i.i.d. sample $\{\bX_i\}_{i \in [m]}$ from the design we have
\begin{align*}
\PP(\xi \mathbb{B}^{p}_2 \not \subset \operatorname{conv}(\eta_1\bX_1, \ldots, \eta_m\bX_m)) \leq \gamma,
\end{align*}
for some $\xi > 0$, where $\eta_i$ are i.i.d. Rademacher random variables which are also independent from the design. Then, for any estimator $\hat \bbeta$ taking values in the set \eqref{main:optimization:problem}, we have that for any $L > 0$
\begin{align*}
    \|\hat \bbeta - \bbeta^*\| \leq \frac{ a (L + 1) \lceil f(p,\gamma) \rceil}{\xi n},
\end{align*}
with probability at least $1-\gamma - \exp\bigg(\frac{- L^2}{\frac{8}{3}L + 2}\bigg)$.
\end{theorem}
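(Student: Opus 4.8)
The plan is to combine the critical inequalities of Lemma~\ref{lemma:critical_ineq} with the convex‑hull hypothesis through a single duality step, after first isolating a large enough collection of observations whose noise sits close to the boundary $\pm a$.

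Write $\delta := \hat\bbeta - \bbeta^*$ (a vector) and assume $\delta \neq 0$, the other case being trivial. I would start from the following deterministic fact. Fix a threshold $t>0$ and suppose there is an index set $S\subseteq[n]$ with $a - |\varepsilon_i| \le t$ for all $i\in S$ and with $\xi\,\mathbb{B}^p_2 \subseteq \operatorname{conv}(\{\tilde\bX_i : i\in S\})$. Then $\xi\,\delta/\|\delta\|$ lies in that convex hull, so we may write $\xi\,\delta/\|\delta\| = \sum_{i\in S}\lambda_i\tilde\bX_i$ with $\lambda_i\ge 0$ and $\sum_{i\in S}\lambda_i = 1$; taking the inner product with $\delta$ and using \eqref{critical_ineq_idv} (valid for any $\hat\bbeta$ satisfying \eqref{main:optimization:problem}) gives $\xi\|\delta\| = \sum_{i\in S}\lambda_i\,\tilde\bX_i\T\delta \le \sum_{i\in S}\lambda_i\,(a-|\varepsilon_i|) \le t$, i.e.\ $\|\delta\| \le t/\xi$. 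So it is enough to exhibit, with the stated probability, such an $S$ together with a threshold $t \le a(L+1)\lceil f(p,\gamma)\rceil / n$.

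Set $m_0 := \lceil f(p,\gamma)\rceil$ and take $t := a(L+1)m_0/n$ (assume $(L+1)m_0 \le n$; if not, take $t=a$, note every index qualifies, and use $a/\xi \le a(L+1)m_0/(\xi n)$). Because $|\varepsilon_i| \sim U([0,a])$, the count $N_t := \#\{i\in[n] : a-|\varepsilon_i| \le t\}$ is $\mathrm{Binomial}(n, t/a)$ with mean $(L+1)m_0$, so Bernstein's inequality (variance proxy $(L+1)m_0$, range $1$, deviation $Lm_0$) gives $\PP(N_t < m_0) \le \exp\!\big(-\tfrac{L^2 m_0/2}{\frac{4}{3}L+1}\big) \le \exp\!\big(\tfrac{-L^2/2}{\frac{4}{3}L+1}\big)$, the last inequality using $m_0\ge 1$. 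On the event $\{N_t \ge m_0\}$ I let $S$ be the $m_0$ smallest indices with $a-|\varepsilon_i|\le t$, so the first requirement on $S$ holds and $|S| = m_0 \ge f(p,\gamma)$. It then remains to apply the convex‑hull hypothesis to this $S$ and to union‑bound: on $\{N_t \ge m_0\}$ the conditional probability, given $(|\varepsilon_i|)_{i\in[n]}$, that $\xi\mathbb{B}^p_2\not\subseteq\operatorname{conv}(\{\tilde\bX_i : i\in S\})$ is at most $\gamma$, so overall the failure probability is at most $\gamma + \exp\!\big(\tfrac{-L^2/2}{\frac{4}{3}L+1}\big)$, and on the good event $\|\delta\| \le t/\xi = a(L+1)m_0/(\xi n)$.

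The step I expect to be the main obstacle is the last one: justifying that the convex‑hull hypothesis may be invoked for the \emph{data‑dependent} set $S$. The way around it is to observe that $N_t$ and the choice of $S$ depend on the noise only through $(|\varepsilon_1|,\dots,|\varepsilon_n|)$; conditioning on this $\sigma$‑field, the covariates $\bX_i$ are still i.i.d.\ from the design, while the signs $\eta_i = -\sgn(\varepsilon_i)$ remain i.i.d.\ Rademacher, mutually independent and independent of the $\bX_i$ --- this is exactly where independence of $\bX_i$ and $\varepsilon_i$ and the symmetry of $U([-a,a])$ enter. Hence, conditionally, $\{(\bX_i,\eta_i):i\in S\}$ is an i.i.d.\ sample of size $m_0\ge f(p,\gamma)$ from the design paired with independent Rademachers, and the hypothesis applies verbatim with $m=m_0$. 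A secondary, purely arithmetic, point is to tune the Bernstein parameters so that the exponent collapses exactly to $\tfrac{-L^2/2}{\frac{4}{3}L+1}$.
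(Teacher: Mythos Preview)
Your proposal is correct and follows essentially the same route as the paper's proof: both arguments select the $\lceil f(p,\gamma)\rceil$ observations with $|\varepsilon_i|$ closest to $a$, control the gap $a-|\varepsilon_{(\lceil f(p,\gamma)\rceil)}|$ via the same Bernstein computation (your binomial count $N_t\ge m_0$ is exactly the order-statistic event $|\bar\varepsilon_{(m_0)}|\ge a-t$), invoke the convex-hull hypothesis on the resulting index set, and turn ball containment into the $\ell_2$ bound via convex combinations. You are simply more explicit than the paper about two points the paper leaves implicit---why the hypothesis may be applied to the data-dependent set $S$ (your conditioning on $(|\varepsilon_i|)_i$ is the right justification) and how membership of $\xi\delta/\|\delta\|$ in the convex hull yields $\xi\|\delta\|\le t$.
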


\begin{remark} One can see that when the constant $L$ is fixed we do obtain constant probability bounds (which decay exponentially with $L$). Perhaps with slight abuse of terminology, throughout the paper we refer to this type of bound as a ``high probability'' bound, even though it does not decay to $0$ as $n$ goes to $\infty$. This is similar in spirit to how one can only obtain constant confidence bounds for the expression $|\sum_{i \in [n]} X_i/n  - \mu| < C/\sqrt{n}$ for any constant $C > 0$, where $X_i \sim N(\mu,1)$ (since the variable $\sqrt{n}(\sum_{i \in [n]} X_i/n  - \mu) \sim N(0,1)$). 
\end{remark}

\begin{proof} Sort the absolute values of the errors $|\varepsilon_i| \sim U([0,a])$ in a decreasing manner $|\bar \varepsilon_{(i)}|$, so that $a \geq |\bar \varepsilon_{(1)}| \geq\ldots \geq |\bar \varepsilon_{(n)}| \geq 0$. Take the first $\lceil f(p,\gamma) \rceil$ many of them.
%
%
By Lemma \ref{lemma:uniform:random:variables:concentration} we know that:
\begin{align*}
    &\PP\bigg( \frac{|\bar \varepsilon_{(\lceil f(p,\gamma) \rceil)}|}{a} < 1 - \frac{(L + 1)\lceil f(p,\gamma) \rceil}{n}\bigg) \leq \exp\bigg(\frac{- L^2/2}{\frac{4}{3}L + 1}\bigg)\end{align*}
Let $\cE$ be the complement of the event in the probability above. Now, by Lemma \ref{lemma:critical_ineq}, on the event $\cE$ we have:
\begin{align}\label{critical:inequalities:proof:thm22}
    \eta_i \bX_i\T (\hat \bbeta - \bbeta^*) \leq a - a\bigg(1 - \frac{(L + 1)\lceil f(p,\gamma) \rceil}{n}\bigg) = \frac{a (L + 1)\lceil f(p,\gamma) \rceil}{n},
\end{align}
for all $i$ corresponding to the $\lceil f(p,\gamma) \rceil$ largest in magnitude $\varepsilon_i$'s (denote this index set by $S$). Since with probability at least $1 - \gamma$, we have the $\xi \frac{(\hat \bbeta - \bbeta^*)}{\|\hat \bbeta - \bbeta^*\|} \in \operatorname{conv}(\{\eta_i \bX_i\}_{i \in S})$ we can write
\begin{align*}
\xi \frac{(\hat \bbeta - \bbeta^*)}{\|\hat \bbeta - \bbeta^*\|} = \sum_{i = 1}^n \alpha_i \eta_i \bX_i,
\end{align*}
 where $\sum_{i = 1}^n \alpha_i = 1$ and $\alpha_i \geq 0$. We can now multiply the inequalities \eqref{critical:inequalities:proof:thm22} by $\alpha_i$ and sum them up to obtain the desired conclusion upon rearranging terms, and using the union bound.
\end{proof}

\begin{remark}\label{remark:extension} The above theorem can be readily generalized to settings where the noise is continuous, symmetric and bounded on an interval $[-a,a]$ but is not necessarily uniform. All that needs to be done is to replace the application of Lemma \ref{lemma:uniform:random:variables:concentration} with Lemma \ref{extension:nonuniform:noise}. In fact, all of our results can be extended to cover this more general case with almost no efforts. We do not pursue this further here to keep the exposition simple. It should be noted however, that while the upper bound results can be extended to the more general setting of symmetric bounded noise, the optimality of the Chebyshev estimator in such a setting is less clear. \\
\end{remark}

\begin{example}\label{stupid:design:optimality}
We will now exhibit a simple example of a random design which satisfies the condition imposed in Theorem \ref{ball:in:convex:hull:follows:rate}. Although this example may appear contrived at this point, it is an important example for assessing the difficulty of estimation of $\bbeta^*$, as we will see later when we discuss a minimax lower bound. More natural design examples will follow below. Take the random design $\bX_i \sim U(\sqrt{p}\{\vb_1, \ldots, \vb_p\})$, where $\vb_i, i \in [p]$ denote vectors from any orthonormal basis. We therefore have $\tilde \bX_i \sim U(\sqrt{p}\{\pm\vb_1, \ldots, \pm\vb_p\}) $. 
 
First we will show that if all vectors $\{\pm \sqrt{p} \vb_j\}_{j \in [p]}$ are present within the $m$ considered samples we have a $\mathbf{0}$-centered $\ell_2$-ball inside. Take any point on $\bx \in \mathbb{B}^{p}_2$, and write it as $\bx = \sum_{j \in [p]} a_j \vb_j$. We have that $\sum a_j^2 \leq 1$, and hence $\sum_{j \in [p]} |a_j| \leq \sqrt{p}$. This means that we can represent $\bx = \sum_j \alpha_j (\sign(a_j) \sqrt{p} \vb_j)$, where $\sum_j \alpha_j \leq 1, \alpha_j \geq 0$, where $\alpha_j = |a_j|/\sqrt{p}$. On the other hand since clearly $\mathbf{0} \in \operatorname{conv}(\{\pm \sqrt{p} \vb_j\}_{j \in [p]})$ this implies that $\bx  \in \operatorname{conv}(\{\pm \sqrt{p} \vb_j\}_{j \in [p]})$, and since $\bx$ was arbitrary $\mathbb{B}^{p}_2 \subset \operatorname{conv}(\{\pm \sqrt{p} \vb_j\}_{j \in [p]})$. 

Now, it suffices to show that with high probability the set $\{\eta_i \bX_i\}_{i \in [m]}$ contains all vectors from the set $\{\pm \sqrt{p} \vb_j\}_{j \in [p]}$. The probability that a specific vector is not in this set is $(1 - 1/(2p))^m$, hence by a union bound we obtain an upper bound $2p (1 - 1/(2p))^m$. Hence since $2p (1 - 1/(2p))^m \leq 2p \exp(-m/(2p))$, for $m \geq 2p \log ( 2 \gamma^{-1} p)$ we have this probability is bounded by $\gamma$. Therefore by Theorem \ref{ball:in:convex:hull:follows:rate} we can conclude that with probability at least $1 -\gamma - \exp(- L^2/(8L/3 + 2))$ we have $\|\hat \bbeta - \bbeta^*\| \leq a(L + 1) (2p \log ( 2 \gamma^{-1} p) + 1)/n$.

\end{example}

Next, we will formalize a sufficient condition under which the design must contain a large $\ell_2$ ball. 

\begin{theorem}
\label{lemma:wendel_v3}
Let $\tilde \bX_1, \ldots, \tilde\bX_m$ be i.i.d. random points in $ \RR^p$, whose distribution is symmetric about $\mathbf{0}$.  If the distribution of $\tilde\bX$ satisfies
\begin{align*}
   \rho := \sup_{\vb \in \mathbb{S}^{p-1}}\frac{\PP\big( |\langle \vb, \tilde \bX \rangle| \leq 2\xi\big)}{2} + \PP(\|\tilde \bX\| \geq \Upsilon) < \frac{1}{2},
\end{align*}
for some $\xi, \Upsilon > 0$, then
\begin{align*}
    \PP(\xi \mathbb{B}^{p}_2 \not \subset \operatorname{conv}(\tilde\bX_1, \ldots, \tilde\bX_m)) \leq \bigg(1 + \frac{2\Upsilon}{\xi}\bigg)^p \big(\frac{1}{2} + \rho\big)^m.
\end{align*}
where $\mathbb{B}^{p}_2$ is the $\ell_{2}$ ball centered at $\mathbf{0}$.
\end{theorem}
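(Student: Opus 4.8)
The plan is to pass from the geometric inclusion to a statement about finitely many halfspaces by covering the sphere with a net, and then to control each of the resulting probabilities using only the symmetry of $\tilde\bX$ together with a union bound.

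First I would dualize. Since $K:=\operatorname{conv}(\tilde\bX_1,\dots,\tilde\bX_m)$ is a compact convex polytope and $\xi\,\mathbb{B}^{p}_2$ is compact convex, the inclusion $\xi\,\mathbb{B}^{p}_2\subseteq K$ is equivalent to the support function of $K$ dominating that of $\xi\,\mathbb{B}^{p}_2$, i.e.\ $\max_{i\in[m]}\langle\vb,\tilde\bX_i\rangle\ge\xi$ for every $\vb\in\mathbb{S}^{p-1}$ (the support function of a polytope being the maximum over its vertices). Hence the event to be bounded is $\mathcal B:=\{\exists\,\vb\in\mathbb{S}^{p-1}:\ \langle\vb,\tilde\bX_i\rangle<\xi\ \text{for all }i\in[m]\}$.

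Next comes the net reduction. Take a $(\xi/\Upsilon)$-net $\mathcal N\subseteq\mathbb{S}^{p-1}$ with $|\mathcal N|\le(1+2\Upsilon/\xi)^{p}$ (the standard volumetric covering bound, with net points on the sphere). On $\mathcal B$, fix a witnessing $\vb$ and pick $\vb'\in\mathcal N$ with $\|\vb-\vb'\|\le\xi/\Upsilon$; then for every $i$ we have either $\|\tilde\bX_i\|>\Upsilon$ or $\langle\vb',\tilde\bX_i\rangle\le\langle\vb,\tilde\bX_i\rangle+\|\vb-\vb'\|\,\|\tilde\bX_i\|<\xi+\xi=2\xi$. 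Therefore $\mathcal B\subseteq\bigcup_{\vb'\in\mathcal N}F_{\vb'}$, where $F_{\vb'}:=\bigcap_{i\in[m]}\big(\{\langle\vb',\tilde\bX_i\rangle<2\xi\}\cup\{\|\tilde\bX_i\|>\Upsilon\}\big)$. The essential trick here is that the ``large-norm'' alternative is folded into each per-sample factor of $F_{\vb'}$; treating $\{\exists i:\|\tilde\bX_i\|>\Upsilon\}$ as a separate failure event would instead produce an unwanted factor $m\,\PP(\|\tilde\bX\|\ge\Upsilon)$ and destroy the clean product form. This is the step that requires the most care and is, in my view, the main obstacle.

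Finally I would bound each term. By independence $\PP(F_{\vb'})=q_{\vb'}^{\,m}$ with $q_{\vb'}:=\PP(\langle\vb',\tilde\bX\rangle<2\xi\ \text{or}\ \|\tilde\bX\|>\Upsilon)\le\PP(\langle\vb',\tilde\bX\rangle<2\xi)+\PP(\|\tilde\bX\|\ge\Upsilon)$. Writing $Z:=\langle\vb',\tilde\bX\rangle$, the symmetry of $\tilde\bX$ gives $\PP(Z<-2\xi)=\PP(Z>2\xi)$, hence $\PP(Z<-2\xi)=\tfrac12\big(1-\PP(|Z|\le2\xi)\big)$, and so $\PP(Z<2\xi)=\PP(Z<-2\xi)+\PP(-2\xi\le Z<2\xi)\le\tfrac12+\tfrac12\PP(|Z|\le2\xi)\le\tfrac12+\sup_{\vb\in\mathbb{S}^{p-1}}\tfrac12\PP(|\langle\vb,\tilde\bX\rangle|\le2\xi)$. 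Adding $\PP(\|\tilde\bX\|\ge\Upsilon)$ and recalling the definition of $\rho$ yields $q_{\vb'}\le\tfrac12+\rho$, whence a union bound over $\mathcal N$ gives $\PP(\mathcal B)\le|\mathcal N|\,(\tfrac12+\rho)^m\le(1+2\Upsilon/\xi)^p(\tfrac12+\rho)^m$. A secondary point to watch is that the net must live inside $\mathbb{S}^{p-1}$ so that the supremum defining $\rho$ genuinely applies to $Z$, and one should allow for possible atoms of $Z$ at $\pm2\xi$, which only affects a harmless $<$-versus-$\le$ distinction.
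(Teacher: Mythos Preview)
Your proposal is correct and follows essentially the same route as the paper's proof: both dualize the inclusion $\xi\,\mathbb{B}^p_2\subseteq\operatorname{conv}(\tilde\bX_1,\dots,\tilde\bX_m)$ to the event that some direction $\vb\in\mathbb{S}^{p-1}$ has all inner products on one side of a threshold, pass to a $(\xi/\Upsilon)$-net of size at most $(1+2\Upsilon/\xi)^p$, fold the large-norm alternative $\{\|\tilde\bX_i\|\ge\Upsilon\}$ into each per-sample factor exactly as you emphasize, and finish by the symmetry identity $\PP(\langle\vb,\tilde\bX\rangle\ge-2\xi)=\tfrac12+\tfrac12\,\PP(|\langle\vb,\tilde\bX\rangle|\le2\xi)$. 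The only cosmetic difference is that the paper phrases the dual step as ``$\mathbf{0}$ lies in the shifted hull'' rather than via support functions, which amounts to the same separating-hyperplane argument.
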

%
%
\begin{remark} By the extended Markov's inequality condition \eqref{strange:condition} is satisfied if for some monotonically increasing positive function $\phi$, assuming $\EE \phi(\|\tilde \bX\|) < \infty$, we have
\begin{align*}
   \rho \leq \sup_{\vb \in \mathbb{S}^{p-1}}\frac{\PP\big( |\langle \vb, \tilde \bX \rangle| \leq 2\xi\big)}{2} + \frac{\EE \phi(\|\tilde \bX\|)}{\phi(\Upsilon)}.
\end{align*}
Therefore the theorem statement continues to hold with 
\begin{align*}
\rho :=  \sup_{\vb \in \mathbb{S}^{p-1}}\frac{\PP\big( |\langle \vb, \tilde \bX \rangle| \leq 2\xi\big)}{2} + \frac{\EE \phi(\|\tilde \bX\|)}{\phi(\Upsilon)}.
\end{align*} One simple instance that we will be using throughout the paper is when $\phi(x) = x$. Assuming that $\EE \|\tilde \bX\| < \infty$ and setting $\Upsilon = c \EE \|\tilde \bX\|$ in the definition of $\rho$ above we obtain that if
\begin{align}\label{strange:condition}
\rho := \sup_{\vb \in \mathbb{S}^{p-1}}\frac{\PP\big( |\langle \vb, \tilde \bX \rangle| \leq 2\xi\big)}{2} + c^{-1} \leq \frac{1}{2},
\end{align}
then 
\begin{align}\label{prob:bound:l2:ball}
    \PP(\xi \mathbb{B}^{p}_2 \not \subset \operatorname{conv}(\tilde\bX_1, \ldots, \tilde\bX_m)) \leq \bigg(1 + \frac{2c  \EE \|\tilde \bX\|}{\xi}\bigg)^p \big(\frac{1}{2} + \rho\big)^m.
\end{align}

The proof of Theorem \ref{lemma:wendel_v3} is elementary and is based on a covering argument.  Furthermore, the proof can be extended to any $\ell_q, q \geq 1$ norm ball. We do not pursue this here in order to simplify the presentation, and since it is not very useful for our purposes (which are to derive bounds on $\|\hat \bbeta - \bbeta^*\|$). In passing we would also like to mention a recent reference \citep{guedon2022geometry} which studies the geometry of the absolute convex hull of $n$ i.i.d. observations $\bX_1,\ldots, \bX_n$, i.e., they study the geometry of $\operatorname{conv}\{\pm\bX_1,\ldots,\pm\bX_n\}$, and show that this set contains a deterministic set associated with the law of the random vectors $\bX_i$. This is result is related to but is of different nature compared to Theorem \ref{lemma:wendel_v3}. 
\end{remark}

\begin{proof}[Proof of Theorem \ref{lemma:wendel_v3}]
Let $\wb \in \xi \mathbb{B}^{p}_2$ be an arbitrary vector such that $\|\wb\| \leq \xi$. We are interested when is the point $ - \wb$ in $\operatorname{conv}(\tilde\bX_1, \ldots,\tilde \bX_m)$, which is equivalent to $\mathbf{0}$ belonging to the convex hull $\operatorname{conv}(\tilde\bX_1 + \wb, \ldots, \tilde\bX_m + \wb)$. Note that this happens when there does not exist a $\vb$ ($\vb \neq \mathbf{0}$) such that for all $i\in[n]$
\begin{align*}
\langle \vb, \tilde\bX_i + \wb \rangle\geq0 \quad\Rightarrow 
\langle \vb,  \tilde\bX_{i} \rangle \geq -\langle \vb, \wb\rangle \geq -\|\vb\| \| \wb\| \geq -\xi \|\vb\|.
\end{align*}
So if such a $\vb \in \mathbb{S}^{p-1}$ satisfying $\langle \vb,  \tilde \bX_{i} \rangle \geq -\xi $  for all $i$ does not exist, then we are guaranteed to have  $ -\wb\in\operatorname{conv}(\tilde\bX_1, \ldots,\tilde \bX_m)$. Since $\wb$ is arbitrary it will follow that $\xi \mathbb{B}^{p}_2 \subset \operatorname{conv}(\tilde\bX_1, \ldots, \tilde\bX_m)$. 

Now consider the probability

\begin{align*}
    \PP(\exists \vb \in \mathbb{S}^{p-1}: \inf_{i \in [n]}\langle \vb,  \tilde\bX_{i} \rangle \geq -\xi ).
\end{align*}
Construct a minimum $\delta$-cover $\cN_\delta$ on $\mathbb{S}^{p-1}$ such that for each $\vb\in\mathbb{S}^{p-1}$, there exists $\vb'\in\cN_\delta$ such that $\|\vb-\vb'\|\leq\delta$, and $\cN_\delta$ contains as few points as possible. 

If $\exists \vb \in \mathbb{S}^{p-1}: -\xi \leq \langle \vb, \tilde\bX_i \rangle$, then for the closest-to-$\vb$ point $\vb'$ in the $\delta$-cover set $\cN_\delta$ we have
\begin{align*}
\langle \vb, \tilde \bX_i \rangle = \langle \vb - \vb', \tilde\bX_i \rangle + \langle \vb', \tilde\bX_i \rangle \leq \langle \vb',\tilde \bX_i \rangle + \delta\|\tilde\bX_i\|.
\end{align*}
Hence it follows that 
\begin{align*}
\PP(\exists \vb \in \mathbb{S}^{p-1}: -\xi \leq \langle \vb, \tilde\bX_i \rangle, \forall i) & \leq \PP(\exists \vb' \in \cN_\delta: \langle \vb', \tilde\bX_i \rangle \geq -\xi - \delta\|\tilde\bX_i\|, \forall i)\\
& \leq |\cN_{\delta}| (\sup_{\vb \in \mathbb{S}^{p-1}}\PP\big( \langle \vb,\tilde \bX \rangle \geq -\xi - \Upsilon\delta\big) + \PP(\|\tilde\bX\| \geq \Upsilon))^{m},
\end{align*}
for any $\Upsilon > 0$. Set $\delta = \xi/\Upsilon$, to obtain
\begin{align*}
\PP(\exists \vb \in \mathbb{S}^{p-1}: -\xi \leq \langle \vb, \tilde \bX_i \rangle \forall i) & \leq (1 + 2\Upsilon/\xi)^p (\sup_{\vb \in \mathbb{S}^{p-1}}\PP\big( \langle \vb, \tilde\bX \rangle \geq -2\xi\big) + \PP(\|\tilde\bX\| \geq \Upsilon))^{m},
\end{align*}
where we used that by a standard volumetric argument we have $|\cN_\delta| \leq (1 + 2/\delta)^p$. Now we observe that by sign symmetry for any $\vb$: $\PP\big( \langle \vb, \tilde\bX \rangle \geq -2\xi\big) = 1/2 + \PP( |\langle \vb, \tilde\bX \rangle| \leq 2\xi)/2$. Hence since $\rho = \sup_{\vb \in \mathbb{S}^{p-1}}\PP\big( |\langle \vb, \tilde\bX \rangle| \leq 2\xi\big)/2 + \PP(\|\tilde\bX\| \geq \Upsilon) < 1/2$ we concude:
\begin{align*}
\PP(\exists \vb \in \mathbb{S}^{p-1}: -\xi \leq \langle \vb, \tilde\bX_i \rangle \forall i) & \leq \bigg(1 + \frac{2c  \EE \|\tilde \bX\|}{\xi}\bigg)^p \big(\frac{1}{2} + \rho\big)^m,
\end{align*}
which is what we wanted to show. 
\end{proof}

We will now give a simple Corollary to Theorem \ref{lemma:wendel_v3} which is easy to use, as it only relies on certain moment calculations.

\begin{corollary}\label{cor:paley:zygmund} Suppose $\EE \|\tilde \bX\| < \infty$. For a fixed $\theta \in [0,1)$ and $\alpha > 0, q > 1$ define 
\begin{align*}
\rho := c^{-1} +  \frac{1}{2}\bigg(1-\inf_{\vb \in \mathbb{S}^{p-1}}\frac{((1-\theta)\EE |\langle \vb, \tilde\bX \rangle|^{\alpha})^{\frac{q}{q-1}}}{(\EE |\langle \vb, \tilde\bX \rangle|^{q\alpha})^{\frac{1}{q-1}}}\bigg), \mbox{    and   } \xi := (\theta \inf_{\vb \in \mathbb{S}^{p-1}}\EE |\langle \vb, \tilde\bX\rangle|^{\alpha})^{1/\alpha}/2.
\end{align*} 
If $\rho < 1/2$, then \eqref{prob:bound:l2:ball} continues to hold with this choice of $\rho$ and $\xi$.
\end{corollary}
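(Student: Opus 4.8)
The plan is to deduce the corollary from Theorem~\ref{lemma:wendel_v3}, or rather from the version of it recorded in the remark (the implication \eqref{strange:condition}~$\Rightarrow$~\eqref{prob:bound:l2:ball} with $\phi(x)=x$ and $\Upsilon = c\,\EE\|\tilde\bX\|$). Since the $\xi$ in the corollary is exactly the radius that will enter \eqref{prob:bound:l2:ball}, the only thing that needs to be shown is that the quantity $\rho$ defined in the corollary dominates $\sup_{\vb\in\mathbb{S}^{p-1}}\tfrac12\PP(|\langle\vb,\tilde\bX\rangle|\le 2\xi) + c^{-1}$, the $\rho$ appearing in \eqref{strange:condition}. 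Once that is in hand, the hypothesis $\rho<\tfrac12$ forces \eqref{strange:condition} to hold, hence \eqref{prob:bound:l2:ball} holds with the \emph{true} small-ball $\rho$; and because the corollary's $\rho$ is larger (while still below $\tfrac12$) and the prefactor $(1+2c\EE\|\tilde\bX\|/\xi)^p$ and radius $\xi$ are untouched, the bound \eqref{prob:bound:l2:ball} also holds verbatim with the corollary's $\rho$ and $\xi$.

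The heart of the matter is therefore a uniform small-ball estimate, which I would obtain from the generalized Paley--Zygmund inequality. For a nonnegative random variable $Z$ with $\EE Z^q<\infty$, $q>1$, and $\theta\in[0,1)$, writing $\EE Z = \EE[Z\,\mathbbm{1}(Z\le\theta\EE Z)] + \EE[Z\,\mathbbm{1}(Z>\theta\EE Z)]$ and applying Hölder's inequality with exponents $q$ and $q/(q-1)$ to the second term gives $(1-\theta)\EE Z \le (\EE Z^q)^{1/q}\,\PP(Z>\theta\EE Z)^{(q-1)/q}$, i.e.
\begin{align*}
\PP(Z>\theta\,\EE Z) \;\ge\; \frac{\big((1-\theta)\,\EE Z\big)^{\frac{q}{q-1}}}{(\EE Z^q)^{\frac{1}{q-1}}}.
\end{align*}
I would apply this with $Z = Z_\vb := |\langle\vb,\tilde\bX\rangle|^\alpha$ for each fixed $\vb\in\mathbb{S}^{p-1}$, so that $\EE Z_\vb = \EE|\langle\vb,\tilde\bX\rangle|^\alpha$ and $\EE Z_\vb^q = \EE|\langle\vb,\tilde\bX\rangle|^{q\alpha}$; rewriting the event in terms of $|\langle\vb,\tilde\bX\rangle|$ and passing to the complement yields
\begin{align*}
\PP\!\big(|\langle\vb,\tilde\bX\rangle| \le (\theta\,\EE|\langle\vb,\tilde\bX\rangle|^\alpha)^{1/\alpha}\big)
\;\le\; 1 - \frac{\big((1-\theta)\,\EE|\langle\vb,\tilde\bX\rangle|^\alpha\big)^{\frac{q}{q-1}}}{(\EE|\langle\vb,\tilde\bX\rangle|^{q\alpha})^{\frac{1}{q-1}}}.
\end{align*}

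To finish I would use that, by monotonicity of $t\mapsto(\theta t)^{1/\alpha}$, the corollary's radius satisfies $2\xi = (\theta\,\inf_\vb\EE|\langle\vb,\tilde\bX\rangle|^\alpha)^{1/\alpha} \le (\theta\,\EE|\langle\vb,\tilde\bX\rangle|^\alpha)^{1/\alpha}$ for every $\vb$, so lowering the threshold to $2\xi$ only decreases the left-hand side above; hence $\PP(|\langle\vb,\tilde\bX\rangle|\le 2\xi)$ is bounded by the same right-hand side, for every $\vb$. Taking the supremum over $\vb\in\mathbb{S}^{p-1}$ (which turns the ``$\inf_\vb$'' inside the bound into the one written in the corollary), dividing by $2$, and adding $c^{-1}$ gives precisely that the corollary's $\rho$ dominates the $\rho$ of \eqref{strange:condition}, which is what the reduction in the first paragraph requires.

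There is no serious obstacle here; the work is bookkeeping: keeping the exponents $\tfrac{q}{q-1}$ and $\tfrac{1}{q-1}$ straight in the Hölder step, taking the $\inf$/$\sup$ over the sphere in the correct directions, and checking that enlarging $\rho$ while keeping it under $\tfrac12$ leaves \eqref{prob:bound:l2:ball} valid (the right-hand side only grows). The one point worth flagging is the implicit nondegeneracy, $0<\inf_\vb\EE|\langle\vb,\tilde\bX\rangle|^\alpha$ and $\EE|\langle\vb,\tilde\bX\rangle|^{q\alpha}<\infty$ for all $\vb$ — both are automatic consequences of the standing assumptions $\EE\|\tilde\bX\|<\infty$ and $\rho<\tfrac12$ (if either failed, the ``$\inf_\vb$'' term in the corollary's $\rho$ would be $0$, forcing $\rho\ge\tfrac12$).
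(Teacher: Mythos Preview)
Your proposal is correct and follows essentially the same route as the paper: apply the generalized Paley--Zygmund inequality to $Z_\vb=|\langle\vb,\tilde\bX\rangle|^\alpha$, set $2\xi=(\theta\inf_\vb\EE Z_\vb)^{1/\alpha}$ so the threshold only decreases, take $\sup_\vb$, and then invoke the Markov-based version of Theorem~\ref{lemma:wendel_v3} with $\Upsilon=c\,\EE\|\tilde\bX\|$. The only cosmetic difference is that the paper cites the Paley--Zygmund inequality from \cite{petrov2007lower} whereas you rederive it via the H\"older split, and you are a bit more explicit than the paper about the monotonicity step (enlarging $\rho$ below $\tfrac12$ only loosens \eqref{prob:bound:l2:ball}).
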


\begin{remark}
In what follows, we will mostly use Corollary \ref{cor:paley:zygmund} over Theorem \ref{lemma:wendel_v3}, and we will be setting $\alpha = 1$ or $2$ and $q = 2$. 
\end{remark}

\begin{proof}[Proof of Corollary \ref{cor:paley:zygmund}]
To prove the corollary we note that 
\begin{align*}
\PP\big( |\langle \vb, \tilde\bX \rangle| \leq 2\xi\big) = \PP\big( |\langle \vb, \tilde\bX \rangle|^{\alpha} \leq (2\xi)^{\alpha}\big),
\end{align*}
for any $\alpha > 0$. By the generalized Paley-Zygmund's inequality \citep[see equation (12)][where we instantiate it with $r = 1, s = q$]{petrov2007lower} we have that for any $q > 1$
\begin{align*}
\PP\big( |\langle \vb, \tilde\bX \rangle|^{\alpha} \leq \theta \EE  |\langle \vb, \tilde\bX \rangle|^{\alpha}) \leq 1 - \frac{((1-\theta)\EE |\langle \vb, \tilde\bX \rangle|^{\alpha})^{\frac{q}{q-1}}}{(\EE |\langle \vb, \tilde\bX \rangle|^{q\alpha})^{\frac{1}{q-1}}}.
\end{align*}
It follows that when we set $\xi = (\theta \inf_{\vb \in \mathbb{S}^{p-1}}\EE |\vb\T \tilde\bX|^{\alpha})^{1/\alpha}/2$, \begin{align*}
    \rho \leq \PP(\|\tilde\bX\| \geq \Upsilon) + \frac{1}{2}\bigg(1-\inf_{\vb \in \mathbb{S}^{p-1}}\frac{((1-\theta)\EE |\langle \vb, \tilde\bX \rangle|^{\alpha})^{\frac{q}{q-1}}}{(\EE |\langle \vb, \tilde\bX \rangle|^{q\alpha})^{\frac{1}{q-1}}}\bigg),
\end{align*}
where $\rho$ is as defined in Theorem \ref{lemma:wendel_v3}. This completes the proof after an application of Markov's inequality with $\Upsilon = c \EE\|\tilde\bX\|$ as in the remark after Theorem \ref{lemma:wendel_v3}.
\end{proof}

We will proceed by giving multiple examples applying Theorem \ref{lemma:wendel_v3} and Corollary \ref{cor:paley:zygmund}. We will start with a narrow set of examples which consider popular distributions, and move towards more abstract conditions on the design. We hope to convince the reader that there is a surprising variety of designs which satisfy the condition imposed by Theorem \ref{ball:in:convex:hull:follows:rate}. Below we present only the final results of the application of Theorem \ref{lemma:wendel_v3} and Corollary \ref{cor:paley:zygmund} to the different designs that we consider, and defer the explicit constant calculations to Appendix \ref{appendix:examples}.\\

\begin{example}\label{Gaussian:example:main:text}
The first application of the above result with $\alpha =1$ and $q = 2$ is for Gaussian design. Suppose $\bX_i \sim N(0, \bSigma)$, where $\bSigma$ has smallest eigenvalue $\lambda_{\min} > 0$. It follows that $\tilde \bX_i = \eta_i \bX_i \sim N(0,\bSigma)$. 
It can be argued using  Theorem \ref{ball:in:convex:hull:follows:rate} that with probability at least $1 -\gamma - \exp(- L^2/(8L/3 + 2))$
\begin{align*}
\|\hat \bbeta - \bbeta^*\| \leq \frac{a (L + 1)(C p \log (1 + C'\sqrt{\operatorname{tr}(\bSigma)}/\sqrt{\lambda_{\min} }) + C \log \gamma^{-1} + 1)}{\xi n},
\end{align*}
where $\xi = \sqrt{\lambda_{\min}/(8\pi)}$ and $C$ and $C'$ are absolute constants. For more details see Appendix \ref{appendix:examples}. We would like to stress the fact that this bound is nearly optimal when $\bSigma = \Ib$ as we show in Theorem \ref{Chebyshev:lower:bound:thm} in the supplement. There we argue that in the isotropic case, with constant probability we have $\|\hat \bbeta - \bbeta^*\| \gtrsim a p/(n (\log n)^{3/2})$. As we discuss later, there exists a different (computationally expensive) estimator which achieves a better dimension dependence in the Gaussian case (for sufficiently large $p$, e.g., $p = n^{\alpha}$), which implies that the Chebyshev estimator is sub-optimal.\\
\end{example}

\begin{example} Our next application includes applying Corollary \ref{cor:paley:zygmund} with $\alpha = 1$ and $q = 2$ to Rademacher design. Let $\bX_{ij}$ be i.i.d. Rademacher random variables. In this example, the first variable can also optionally be an intercept. In any case, it follows that $\tilde \bX_i = \eta_i \bX_{i}$ are Rademacher vectors. It can be shown with the help of Theorem \ref{ball:in:convex:hull:follows:rate} that:
\begin{align*}
\|\hat \bbeta - \bbeta^*\| \leq \frac{a(L + 1)(C (p \log (1 + C'\sqrt{p}) + \log \gamma^{-1}) + 1)}{\xi n},
\end{align*}
with probability at least $1 -\gamma - \exp(- L^2/(8L/3 + 2))$, where $C,C',\xi$ are absolute constants. For the precise constants see Appendix \ref{appendix:examples}.\\
\end{example}

\begin{example} Let $\bX_i$ have a uniform distribution on the unit sphere. Then $\tilde \bX_i \stackrel{d}{=} \bX_i$. Let $\gb$ be a standard Gaussian random vector, and observe that $\tilde \bX_i \stackrel{d}{=} \gb/\|\gb\|$. Using Theorem \ref{ball:in:convex:hull:follows:rate} one can show that
\begin{align*}
\|\hat \bbeta - \bbeta^*\| \leq \frac{a(L + 1)(C(p^{3/2} \log (1 + 2\xi^{-1}C\sqrt{p}) + \log \gamma^{-1}) + 1)}{\xi n},
\end{align*}
with probability at least $1 -\gamma - \exp(- L^2/(8L/3 + 2))$, where $C,\xi$ are absolute constants. For more details see Appendix \ref{appendix:examples}.\\

 \end{example}

\begin{example} In this example we analyze a centered elliptical distribution $\bX$. This generalizes two of our previous examples where we considered Gaussian and uniform on the unit sphere distributions. By a stochastic representation theorem for centered elliptical distributions \cite[see Proposition 4.1.2 of][e.g.]{tong2012multivariate} we know that one can generate a centered elliptical random variable as $\bX \stackrel{d}{=} R \Ab \bU$, where $R \geq 0$ is a non-negative random variable independent of $\bU$, $\bU \stackrel{d}{=} \gb/\|\gb\|$  is distributed uniformly over the unit sphere $\mathbb{S}^{p-1}$, and $\Ab \in \RR^{p \times p}$ is a constant matrix. Suppose $\bSigma = \Ab \Ab\T$ has smallest eigenvalue $\lambda_{\min}$ bounded away from $0$ and largest eigenvalue $\lambda_{\max}$ being bounded. We have $\tilde \bX \stackrel{d}{=} \bX$. In what follows we also assume $\EE R > 0$ and $\EE R^2 < \infty$. \\

By Theorem \ref{ball:in:convex:hull:follows:rate} it can be shown that we have that with probability at least $1 -\gamma - \exp(- L^2/(8L/3 + 2))$:
\begin{align*}
\|\hat \bbeta - \bbeta^*\| \leq \frac{a(L + 1) \bigg(\frac{8 \pi \EE R^2}{(\EE R)^2} \bigg(p \log \bigg(1 + \frac{C' \sqrt{p} (\EE R^2)^{3/2} \sqrt{\lambda_{\max}}}{(\EE R)^{3}\sqrt{\lambda_{\min}}}\bigg) + \log \gamma^{-1}\bigg) + 1\bigg)}{\xi n},
\end{align*}
where $\xi = \EE R\lambda_{\min}^{1/2}\sqrt{\frac{2}{\pi}}/(4\sqrt{p})$. For more details see Appendix \ref{appendix:examples}.\\
\end{example}

\begin{example} \label{most:important:example} We now give a general example which only assumes that $\inf_{\vb \in \mathbb{S}^{p-1}}\EE \vb\T \bX\bX\T \vb = \lambda_{\min} > 0$ and $\sup_{\vb\in \mathbb{S}^{p-1}}\EE (\vb\T \bX)^4 \leq C < \infty$. The latter happens in the case when the variables $\bX$ are sub-Gaussian e.g. (in other words we assume that $\EE \exp(t^{-2} (\vb\T \bX)^2) \leq 2$ for some $t \in \RR^+$ for any $\vb \in \mathbb{S}^{p-1}$ (see also Definition \ref{sub-Gaussian:variable:def} in Section \ref{LASSO:section} for a formal definition)). Indeed, this is so by Lemma 5.5 of \cite{Vershynin2012Introduction}. 

Clearly, under these assumptions $\inf_{\vb \in \mathbb{S}^{p-1}}\EE \vb\T \tilde \bX\tilde \bX\T \vb = \lambda_{\min} > 0$ and $\sup_{\vb\in \mathbb{S}^{p-1}}\EE (\vb\T\tilde  \bX)^4 \leq C < \infty$.  By Theorem \ref{ball:in:convex:hull:follows:rate} one can argue that 
\begin{align*}
\|\hat \bbeta - \bbeta^*\| \leq \frac{a(L + 1)\bigg(C'\lambda^{-2}_{\min}\bigg(p \log\bigg(1 + C''\lambda_{\min}^{-5/2}p^{1/2}\bigg) + \log \gamma^{-1}\bigg) + 1\bigg)}{\xi n},
\end{align*}
with probability at least $1 -\gamma - \exp(- L^2/(8L/3 + 2))$, where $C',C''$ are constants that depend on $C$ and $\xi = \lambda^{1/2}_{\min}/(2 \sqrt{2})$.

One can of course assume even less assumptions in which case the bounds will worsen a bit. For instance, instead of assuming $\sup_{\vb\in \mathbb{S}^{p-1}}\EE (\vb\T \bX)^4 \leq C < \infty$ one can simply assume that the coordinates $\bX^{(j)}$ for $j \in [p]$ have bounded $4$-th moments by some constant $C_0$.  Finally, if one is bothered by $4$-th moment assumptions, this too can be relaxed. One needs to use Corollary \ref{cor:paley:zygmund} with $\alpha = 2$ and $q = 1 + \epsilon/2$ (so that $q\alpha = 2 + \epsilon$) for some $\epsilon > 0$. In this way, it suffices to assume that $\sup_{\vb \in \mathbb{S}^{p-1}}\EE |\vb\T \bX|^{2 + \epsilon} < \infty$ which is even weaker than a 4-th moment assumption. For more details see Appendix \ref{appendix:examples}.\\
\end{example}

\begin{example} In our final example we will not impose moment assumptions on the variables (except $\EE \phi(\|\bX\|) < \infty$ for some increasing and positive $\phi$), but we will impose assumptions on the densities of the variables $\vb\T \bX$ for any $\vb \in \mathbb{S}^{p-1}$. To this end we will be applying Theorem \ref{lemma:wendel_v3} directly rather than its corollary. Before we do that we state a lemma. 

\begin{lemma}\label{density:lemma} Suppose that for any $\vb \in \mathbb{S}^{p-1}$ the variables $\vb\T \bX$ have density with respect to the Lebesgue measure (denoted by $f_{\vb}$), and in addition for some $q > 1$ we have $\sup_{\vb \in \mathbb{S}^{p-1}}  (\int f_{\vb}^q(t) dt)^{1/q} \leq C < \infty$. Then for 
\begin{align*}\xi := \frac{1}{2}\bigg[\frac{q}{\pi (q-1) (c_0^{-1}eC)^{\frac{2q}{q-1}}}\bigg]^{\frac{1}{2}},
\end{align*}
we have $\sup_{\vb \in \mathbb{S}^{p-1}}\PP(|\vb\T \bX| \leq 2\xi) \leq c_0.$
\end{lemma}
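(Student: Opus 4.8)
\emph{Proof plan.} The plan is to bound $\PP(|\vb^\top\bX|\le 2\xi)=\int_{-2\xi}^{2\xi} f_{\vb}(t)\,dt$ pointwise in $\vb$ and then pass to the supremum over the sphere; the appearance of $\pi$ and $e$ in the prescribed value of $\xi$ signals that the right device is to majorize the indicator of the interval $[-2\xi,2\xi]$ by a Gaussian bump and then invoke H\"older's inequality to bring in the $L^q$ control on $f_{\vb}$.

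First I would record the elementary pointwise inequality $\mathbbm{1}(|t|\le 2\xi)\le e\cdot e^{-t^2/(4\xi^2)}$, valid for every $t\in\RR$: when $|t|\le 2\xi$ one has $t^2/(4\xi^2)\le 1$, so the right-hand side is at least $e\cdot e^{-1}=1$, while for $|t|>2\xi$ the left-hand side vanishes. Integrating this against the density $f_{\vb}$ gives $\PP(|\vb^\top\bX|\le 2\xi)=\EE\,\mathbbm{1}(|\vb^\top\bX|\le 2\xi)\le e\int_{\RR} f_{\vb}(t)\,e^{-t^2/(4\xi^2)}\,dt$.

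Next I would apply H\"older's inequality with conjugate exponents $q$ and $q'=q/(q-1)$ to the last integral, bounding it by $\|f_{\vb}\|_q\,\big(\int_{\RR} e^{-q't^2/(4\xi^2)}\,dt\big)^{1/q'}\le C\,\big(2\xi\sqrt{\pi/q'}\,\big)^{1/q'}$, where I used the hypothesis $\|f_{\vb}\|_q\le C$ and the Gaussian integral $\int_{\RR} e^{-at^2}dt=\sqrt{\pi/a}$ with $a=q'/(4\xi^2)$. Thus $\PP(|\vb^\top\bX|\le 2\xi)\le eC\,\big(2\xi\sqrt{\pi/q'}\,\big)^{1/q'}$. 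Since $1/q'=(q-1)/q$ and $\sqrt{\pi/q'}=\sqrt{\pi(q-1)/q}$, substituting the stated $\xi=\tfrac12\sqrt{q/(\pi(q-1))}\,(c_0^{-1}eC)^{-q/(q-1)}$ makes the factor $2\xi\sqrt{\pi/q'}$ collapse to exactly $(c_0^{-1}eC)^{-q/(q-1)}$, hence $\big(2\xi\sqrt{\pi/q'}\,\big)^{1/q'}=(c_0^{-1}eC)^{-1}=c_0/(eC)$ and the bound becomes $eC\cdot c_0/(eC)=c_0$. Since no step used anything about $\vb$ beyond $\|f_{\vb}\|_q\le C$, taking $\sup_{\vb\in\mathbb{S}^{p-1}}$ concludes.

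There is no genuinely hard step here; the only care needed is to choose the Gaussian majorant at the correct scale ($2\xi$) and with the sharp prefactor $e$ so that the final constant comes out to exactly $c_0$ rather than a multiple of it, and to keep the conjugate-exponent bookkeeping ($1/q'=(q-1)/q$ and the value of the Gaussian integral) straight. One could instead split $\int_{-2\xi}^{2\xi} f_{\vb}$ via a level-set truncation followed by H\"older, or use the looser majorant $\sqrt e\,e^{-t^2/(8\xi^2)}$; these give the same order of $\xi$ but with worse constants and would not reproduce the clean closed form claimed in the statement.
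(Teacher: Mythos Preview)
Your proof is correct and essentially coincides with the paper's. The paper phrases the first step as a Chernoff--Markov bound $\PP\big((\vb^\top\bX)^2\le(2\xi)^2\big)\le e^{\lambda(2\xi)^2}\,\EE\,e^{-\lambda(\vb^\top\bX)^2}$ with a free parameter $\lambda$ later set to $1/(4\xi^2)$, which is exactly your pointwise majorant $\mathbbm{1}(|t|\le 2\xi)\le e\cdot e^{-t^2/(4\xi^2)}$ in disguise, and then applies the same H\"older step and Gaussian integral.
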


Under the assumptions of Lemma \ref{density:lemma} with $c_0$, say $c_0 = 1/4$, we can directly apply Theorem \ref{lemma:wendel_v3} with $\rho = \EE \phi(\|\tilde \bX\|)/\phi(\Upsilon) + 1/8 < 1/2$ for $\Upsilon > \phi^{-1}(8/3\EE \phi(\|\tilde \bX\|))$ (notice here that $\sup_{\vb \in \mathbb{S}^{p-1}}\PP(|\vb\T \tilde \bX| \leq 2\xi) = \sup_{\vb \in \mathbb{S}^{p-1}}\PP(|\vb\T \bX| \leq 2\xi)$). Set $\Upsilon = \phi^{-1}(8\EE \phi(\|\tilde \bX\|))$ so that $\rho = 1/4$. Assuming that $\EE \phi(\|\tilde \bX\|) = \EE \phi(\|\bX\|) < \infty$ we have that 
\begin{align*}
    \PP(\xi \mathbb{B}^{p}_2 \not \subset \operatorname{conv}(\tilde\bX_1, \ldots, \tilde\bX_m)) \leq \bigg(1 + \frac{2\Upsilon}{\xi}\bigg)^p \bigg(1 -  \frac{3}{4}\bigg)^m,
\end{align*}
for $\xi$ as in Lemma \ref{density:lemma}.  Hence for $m \geq 4/3(p \log(1 + 2\Upsilon/\xi) + \log \gamma^{-1})$ we have
\begin{align*}
    \PP(\xi \mathbb{B}^{p}_2 \not \subset \operatorname{conv}(\tilde\bX_1, \ldots, \tilde\bX_m)) \leq  \gamma. 
\end{align*}
Using Theorem \ref{ball:in:convex:hull:follows:rate} we can conclude that 
\begin{align*}
\|\hat \bbeta - \bbeta^*\| \leq \frac{a(L + 1)(\frac{4}{3}(p \log(1 + 2\Upsilon/\xi) + \log \gamma^{-1}) + 1)}{\xi n},
\end{align*}
with probability $1 -\gamma - \exp(- L^2/(8L/3 + 2))$. 

We now move on to provide a realistic instance when the assumptions above can be met. Suppose that the covariates $\bX = \bSigma^{\frac{1}{2}} \bZ$, where $\bZ$ is a vector whose entries are independent variables with densities in $L_2 := L_2(\RR)$, such that $\max_{j \in [p]} [\int f^2_{\bZ^{(j)}}(t) dt]^{1/2} < U$ for some fixed $U < \infty$, and $\bSigma^{1/2}$ is a positive semi-definite symmetric matrix whose minimum and maximum eigenvalues $\lambda_{\min}$ and $\lambda_{\max}$ are bounded away from $0$ and $\infty$. Additionally, assume that $\EE \phi(\lambda_{\max} \|\bZ\|) \leq C(p)$ for some constant $C(p)$ which potentially depends on the dimension $p$. 

We will now argue that the densities $f_{\vb}$ of the variables $\vb\T \bX$ for a unit vector $\vb$ exist and are in $L_2$. To this end let $\wb := \vb\T \bSigma^{1/2}$ (for a unit vector $\vb$) and let $\ell$ be the index such that $|\wb^{(\ell)}| = \|\wb\|_{\infty} \geq \|\wb\|_{2}/\sqrt{p} \geq \lambda_{\min}/\sqrt{p}$. Next, we will control the following integral, involving the characteristic function of the variable $\wb\T \bZ = \vb\T \bX$:
\begin{align*}
 \frac{1}{2\pi} \int |\EE e^{i t \wb\T \bZ}|^2 dt & = \frac{1}{2\pi} \int \prod_{j \in [p]}|\EE  e^{i t \wb^{(j)} \bZ^{(j)}}|^2 dt \leq \frac{1}{2\pi} \int |\EE  e^{i t \wb^{(\ell)} \bZ^{(\ell)}}|^2 dt \\
& =  \frac{1}{|\wb^{(\ell)}|2\pi} \int |\EE  e^{i y \bZ^{(\ell)}}|^2 dy = \frac{1}{\|\wb\|_{\infty}}\int f^2_{\bZ^{(\ell)}}(y) dy  < \frac{U^2 \sqrt{p}}{\lambda_{\min}},
\end{align*}
where we applied Plancharel's theorem in the next to last identity. By Lemma 1.1 of \cite{fournier2010absolute}, we know that the above implies that the variable $\vb\T \bX$ has density with respect to the Lebesgue measure. Denote, as in Lemma \ref{density:lemma}, that density with $f_{\vb}$. We will now argue that $f_{\vb}$ is in $L_2$ and satisfies $\int f^2_{\vb}(t) dt < U^2 \sqrt{p}/\lambda_{\min}$. By another application of Plancharel's theorem we have 
\begin{align*}
\int f^2_{\vb}(t) dt =  \frac{1}{2\pi} \int |\EE e^{i t \wb\T \bZ}|^2 dt  <  \frac{U^2 \sqrt{p}}{\lambda_{\min}}.
\end{align*}
It is also easy to verify that $\EE \phi(\|\tilde \bX\|) = \EE \phi(\| \bX\|) \leq \EE \phi(\lambda_{\max } \|\bZ\|) \leq C(p)$.

We end this example with a concrete instance of variables which do not possess moments, yet the above discussion is applicable. Suppose $\bSigma^{\frac{1}{2}} = \Ib$, and $\bX^{(j)} = \bZ^{(j)} \sim Cauchy(0,1)$ for all $j \in [p]$. Clearly $\bZ^{(j)}$ do not even posses a first moment, yet it is easy to see that their densities $f_{\bZ^{(j)}}(t) = 1/(\pi(1 + t^2))$ belong to $L_2$. Coupled with the fact that $\EE \sqrt{\|\bZ\|} \leq \EE \sum_{j \in [p]} |\bZ^{(j)}|^{1/2} = \sqrt{2}p < \infty$ shows that our results can be applied even to Cauchy random variables (with $\phi(x) = \sqrt{x}$). In the last inequalities we used $\sqrt[4]{\sum_{j \in [p]} x_j^2} \leq \sum_{j \in [p]} |x_j|^{1/2}$, and the fact that $\EE \sqrt{|\bZ^{(j)}|} = \sqrt{2}$.
 
\end{example}

We will conclude this section with a result for the known $a$ case, which shows that if one fits least squares \eqref{least:squares:under:constraint}, subject to the constraint \eqref{main:optimization:problem}, one attains ``the best of both worlds'' type of behavior, which will at worst have a standard risk of the least squares. We have the following result:

\begin{proposition}
\label{theorem:alternative_bound}
Suppose $\varepsilon_i\sim U([-a,a])$ where $a>0$ is a known constant, and $\bX^{(j)}_i$ has bounded $4$-th moment for each coordinate $j$.
Denote with $\bSigma := \EE \bX\bX\T$. If 
\begin{align}\label{annoying:condition}
p^6\|\bSigma^{-1}\|_{\operatorname{op}}^2/n=o(1),
\end{align} 
for $\hat\bbeta$ obtained via \eqref{least:squares:under:constraint} and \eqref{main:optimization:problem}, with probability at least $1- C^{-2} - o(1)$ we have
\begin{align}\label{prop:alt_bound:guarantee}
    \|\hat\bbeta - \bbeta^*\| \lesssim C \sqrt{\frac{p}{n}}\|\bSigma^{-1}\|_{\operatorname{op}}.
\end{align}
In addition, if $\sup_{\vb \in \mathbb{S}^{p-1}} \EE |\vb\T \bX|^{2 + \alpha} < \infty$ for some $\alpha \in (0,2]$,  and instead of \eqref{annoying:condition} we have $n > C_{\alpha}' p$ for a sufficiently large constant $C_\alpha'$ depending only on $\alpha$, with probability at least $1 - \exp(-p) - C^{-2}$ \eqref{prop:alt_bound:guarantee} continues to hold.
\end{proposition}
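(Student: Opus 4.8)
The plan is to run the classical least-squares ``basic inequality'' argument, exploiting the single structural fact that $\bbeta^*$ is feasible for \eqref{main:optimization:problem}: since $|Y_i - \bX_i\T \bbeta^*| = |\varepsilon_i| \le a$ almost surely, the point $\bbeta^*$ satisfies the constraints, so the constrained least squares minimizer $\hat\bbeta$ defined by \eqref{least:squares:under:constraint}--\eqref{main:optimization:problem} obeys exactly the same optimality inequality that ordinary least squares does. Consequently the bound we obtain is simply the usual OLS rate, which is precisely the content of \eqref{prop:alt_bound:guarantee} and which formalizes the ``best of both worlds'' claim: none of the convex-hull machinery behind Theorem \ref{ball:in:convex:hull:follows:rate} is needed here. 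Throughout, constants hidden in $\lesssim$ may depend on $a$ and on the assumed moment bounds, consistently with the statement.

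Concretely, feasibility of $\bbeta^*$ gives $\tfrac1n\|\bY - \Xb\hat\bbeta\|^2 \le \tfrac1n\|\bY - \Xb\bbeta^*\|^2 = \tfrac1n\|\bvarepsilon\|^2$. Writing $\hat\bSigma := \tfrac1n\Xb\T\Xb$ and $\bu := \hat\bbeta - \bbeta^*$, substituting $\bY = \Xb\bbeta^* + \bvarepsilon$ and expanding the square rearranges this to $\bu\T\hat\bSigma\bu \le \tfrac2n \bu\T\Xb\T\bvarepsilon \le \tfrac2n\|\bu\|\,\|\Xb\T\bvarepsilon\|$, whence, on the event $\{\lambda_{\min}(\hat\bSigma) > 0\}$,
\begin{align*}
\|\hat\bbeta - \bbeta^*\| \;\le\; \frac{2\,\|\Xb\T\bvarepsilon\|}{n\,\lambda_{\min}(\hat\bSigma)}.
\end{align*}
It therefore suffices to (i) upper bound $\tfrac1n\|\Xb\T\bvarepsilon\|$ and (ii) lower bound $\lambda_{\min}(\hat\bSigma)$, each on a high-probability event, and then combine by a union bound. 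For (i), since $\bvarepsilon$ is independent of $\Xb$ with $\EE\varepsilon_i^2 = a^2/3$, conditioning on $\Xb$ gives $\EE\|\Xb\T\bvarepsilon\|^2 = \tfrac{a^2}{3}\,n\,\operatorname{tr}(\bSigma) \lesssim a^2 n p$, where the last step uses that bounded fourth moments per coordinate imply $\operatorname{tr}(\bSigma) = \sum_j \EE (\bX^{(j)})^2 \lesssim p$; Markov's inequality then yields $\tfrac1n\|\Xb\T\bvarepsilon\| \lesssim C a\sqrt{p/n}$ with probability at least $1 - C^{-2}$.

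For step (ii) under the first set of assumptions, use Weyl's inequality $\lambda_{\min}(\hat\bSigma) \ge \lambda_{\min}(\bSigma) - \|\hat\bSigma - \bSigma\|_{\operatorname{op}}$ together with the second-moment estimate $\EE\|\hat\bSigma - \bSigma\|_{\operatorname{op}}^2 \le \EE\|\hat\bSigma - \bSigma\|_F^2 = \tfrac1n\big(\EE\|\bX\|^4 - \|\bSigma\|_F^2\big) \le \tfrac1n\EE\|\bX\|^4 \lesssim \tfrac{p^2}{n}$, where $\EE\|\bX\|^4 \lesssim p^2$ follows from the coordinatewise fourth-moment hypothesis by the same Cauchy--Schwarz bound used in Example \ref{most:important:example}. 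Markov's inequality then shows that, under the polynomial side condition \eqref{annoying:condition}, with probability $1 - o(1)$ one has $\|\hat\bSigma - \bSigma\|_{\operatorname{op}} \le \tfrac12\lambda_{\min}(\bSigma)$, hence $\lambda_{\min}(\hat\bSigma) \ge \tfrac12\lambda_{\min}(\bSigma) = \tfrac{1}{2\|\bSigma^{-1}\|_{\operatorname{op}}} > 0$; plugging (i) and (ii) into the display gives $\|\hat\bbeta - \bbeta^*\| \lesssim C\sqrt{p/n}\,\|\bSigma^{-1}\|_{\operatorname{op}}$ with probability at least $1 - C^{-2} - o(1)$. For the second part of the statement, only step (ii) changes: the crude Frobenius estimate is replaced by a small-ball argument, which needs only $n \gtrsim p$ (implied constant depending on $\alpha$) rather than \eqref{annoying:condition}, and produces an exponentially small failure probability. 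The directional moment bound $\sup_{\vb\in\mathbb{S}^{p-1}}\EE|\vb\T\bX|^{2+\alpha} < \infty$ together with $\inf_{\vb\in\mathbb{S}^{p-1}}\EE(\vb\T\bX)^2 = \lambda_{\min}(\bSigma) > 0$ gives, by Paley--Zygmund exactly as in Corollary \ref{cor:paley:zygmund}, a uniform small-ball estimate $\inf_{\vb\in\mathbb{S}^{p-1}}\PP\big(|\vb\T\bX| \ge c\sqrt{\lambda_{\min}(\bSigma)}\big) \ge c_0$ with $c,c_0>0$ depending only on $\alpha$ and the moment bound. A standard uniform deviation argument over $\mathbb{S}^{p-1}$ --- a net plus union bound, or a VC-type estimate for the slabs $\{x : |\vb\T x| < t\}$, whose complexity is $O(p)$ --- then upgrades this to $\tfrac1n\sum_{i\in[n]}(\vb\T\bX_i)^2 \gtrsim \lambda_{\min}(\bSigma)$ simultaneously over all $\vb\in\mathbb{S}^{p-1}$, i.e. $\lambda_{\min}(\hat\bSigma) \gtrsim \lambda_{\min}(\bSigma)$, on an event of probability at least $1 - e^{-p}$, provided $n \ge C_\alpha' p$ for $C_\alpha'$ large enough. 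Combining with the (unchanged) bound on $\tfrac1n\|\Xb\T\bvarepsilon\|$ and a union bound yields \eqref{prop:alt_bound:guarantee} with probability at least $1 - e^{-p} - C^{-2}$.

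I expect the only genuinely nontrivial ingredient to be this last one --- the lower bound on $\lambda_{\min}(\hat\bSigma)$ under a mere $(2+\alpha)$-moment assumption on the one-dimensional marginals; the basic-inequality reduction and every moment computation are routine. Even this ingredient is by now classical (the small-ball / VC route above, in the spirit of Mendelson's small-ball method and its descendants). A secondary, purely bookkeeping nuisance is tracking how the coordinatewise fourth-moment hypothesis inflates the moments of $\|\bX\|$ and of the marginals $\vb\T\bX$ by powers of $p$ (as in Example \ref{most:important:example}), which is what forces the polynomial-in-$p$ side condition \eqref{annoying:condition} in the first part.
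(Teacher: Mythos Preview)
Your proposal is correct and follows the same overall strategy as the paper: derive the basic inequality from feasibility of $\bbeta^*$, then separately control the numerator $\|n^{-1}\Xb\T\bvarepsilon\|$ and the denominator $\lambda_{\min}(\hat\bSigma)$. The sub-arguments differ in implementation, and in each case yours is the cleaner one. For the numerator, the paper first bounds $\EE S^2$ (via Khintchine's inequality applied conditionally with the Rademacher sign $\operatorname{sign}(\varepsilon_i)$), then bounds $\operatorname{Var}(S^2)$ and applies Chebyshev's inequality; your direct Markov bound on $\EE\|\Xb\T\bvarepsilon\|^2 = \tfrac{a^2}{3}n\operatorname{tr}(\bSigma)$ achieves the same conclusion in one line. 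For the denominator in the first part, the paper whitens to $\Ab = n^{-1}\sum_i \bSigma^{-1/2}\bX_i\bX_i\T\bSigma^{-1/2}$, controls each entry of $\Ab-\Ib$ by Chebyshev plus a union bound over $p^2$ entries, and then invokes Gershgorin's disk theorem; this entrywise-then-Gershgorin route is exactly what produces the $p^6$ in \eqref{annoying:condition}. Your Weyl-plus-Frobenius argument $\EE\|\hat\bSigma-\bSigma\|_F^2 \lesssim p^2/n$ is simpler and in fact only needs $p^2\|\bSigma^{-1}\|_{\operatorname{op}}^2/n = o(1)$, so it proves the stated claim a fortiori. For the second part, the paper simply cites a black-box result (Corollary~3.1 of Yaskov, also Srivastava--Vershynin and Koltchinskii--Mendelson) giving $\lambda_{\min}(\Ab) \geq 1 - C_\alpha(p/n)^{2/(2+\alpha)}$ with probability $\geq 1 - e^{-p}$; your Paley--Zygmund small-ball plus VC sketch is precisely the mechanism underlying those references, so citing one of them would tighten your argument to the stated $1 - e^{-p}$ without further work.
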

\begin{remark} An unsatisfactory artifact of the first half of Proposition \ref{theorem:alternative_bound} is that it requires $p^6\|\bSigma^{-1}\|_{\operatorname{op}}^2/n=o(1)$. This is because of the proof strategy, which aims to lower bound $\lambda_{\min}(n^{-1} \allowbreak\sum_{i \in [n]} \bX_i\bX_i^T)$, under the minimal constraint that $\bX^{(j)}_i$ has bounded $4$-th moment. This is known as the ``hard edge'' problem in random matrix theory \citep[see, e.g.]{rudelson2010non, vershynin2011spectral, mendelson2010empirical}, and to the best of our knowledge there are currently no reasonable bounds available under such general conditions. One example of a general condition that can be used to lower bound the eigenvalue is $\sup_{\vb \in \mathbb{S}^{p-1}} \EE |\vb\T \bX|^{2 + \alpha} < \infty$ as observed by \cite{srivastava2013covariance, yaskov2014lower, koltchinskii2015bounding}. We are using their result in the second part of this proposition to obtain a much better dependence on $n$ and $p$. 
\end{remark}

\subsection{Minimax lower bound}\label{minimax:bound:section}
To complement the upper bounds derived in the previous section, we derive a minimax lower bound of the estimation error in a uniform noise setting. The minimax lower bound is derived based on Assouad's Lemma \citep{yu1997assouad}. We add a small extension to this standard method in order to also arrive at bounds in probability and not only in expectation. We do so since throughout the paper we focus on probability bounds, hence this is the more relevant object to us.

\begin{theorem}
\label{theorem:lower_bound}
Suppose $\varepsilon_i\sim U([-a,a])$ where $a>0$ is a constant, and $\bX_i$ is any random design independent of the errors. Let 
\begin{align}\label{minimax:risk:def}
    \cR := \frac{a^2 p}{16(\inf_{\Rb \in \cO} \max_{j \in [p]} \EE_{\Xb} \sum_{i \in [n]} |(\bX_i\T \Rb)_j|)^2},
\end{align}
where $\cO$ is the set of all orthogonal matrices. Then the following inequalities hold:
\begin{align*}
    \inf_{\hat \bbeta} \sup_{\bbeta^* \in \RR^p} \EE_{\bbeta^*} \|\hat \bbeta - \bbeta^*\|^2 \geq \cR,
\end{align*}
and in addition
\begin{align*}
    \inf_{\hat \bbeta} \sup_{\bbeta^* \in \RR^p} \PP_{\bbeta^*} (\|\hat \bbeta - \bbeta^*\| \geq \sqrt{\cR/2}) \geq \frac{1}{2^8}.
\end{align*}
\end{theorem}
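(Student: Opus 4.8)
The plan is to prove the minimax lower bound via Assouad's Lemma, building a hypercube of hard instances and then upgrading the standard in-expectation Assouad bound to an in-probability statement.

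First I would construct the hypercube. For a parameter $\tau > 0$ to be chosen and an orthogonal matrix $\Rb \in \cO$, consider the $2^p$ parameter vectors $\bbeta_{\bomega} = \tau \Rb \bomega$ indexed by $\bomega \in \{\pm 1\}^p$; the role of $\Rb$ is to let us later optimize over rotations, which is exactly what produces the $\inf_{\Rb \in \cO}$ in the definition of $\cR$. For any two sign vectors differing in coordinate $j$, the squared distance between the corresponding parameters is $4\tau^2$ (since $\Rb$ is orthogonal), so the separation/Hamming structure needed for Assouad is in place. The key quantity to control is the total-variation (or Hellinger/KL) distance between the data laws $\PP_{\bbeta_{\bomega}}$ and $\PP_{\bbeta_{\bomega'}}$ when $\bomega,\bomega'$ differ in a single coordinate. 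Because the noise is $U([-a,a])$, the density of $Y_i$ given $\bX_i$ is $\frac{1}{2a}\mathbbm{1}(|Y_i - \bX_i\T\bbeta| \le a)$, so conditionally on the design $\Xb$, the TV distance between two such product laws is at most $\sum_{i\in[n]} \frac{1}{2a}\cdot 2 |\bX_i\T(\bbeta_{\bomega}-\bbeta_{\bomega'})| \cdot \tfrac12 $ — more precisely, the TV between two shifted uniform densities on intervals of common length $2a$ with centers differing by $\Delta$ is $\min(1, |\Delta|/(2a))$. Here $\Delta = \bX_i\T(\bbeta_{\bomega} - \bbeta_{\bomega'}) = 2\tau (\bX_i\T\Rb)_j$, so the per-sample TV is at most $\tau |(\bX_i\T\Rb)_j|/a$, and by subadditivity of TV over independent coordinates (and then taking expectation over $\Xb$), the overall TV is at most $\frac{\tau}{a}\EE_{\Xb}\sum_{i\in[n]}|(\bX_i\T\Rb)_j|$. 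Choosing $\tau$ so that this is $\le 1/2$ for the worst coordinate $j$, i.e. $\tau = \frac{a}{2 \max_j \EE_\Xb\sum_i |(\bX_i\T\Rb)_j|}$, makes each neighboring pair indistinguishable with constant probability; optimizing over $\Rb\in\cO$ gives exactly $\tau^2 = \cR/p$ with $\cR$ as in \eqref{minimax:risk:def}, so the per-coordinate squared separation $4\tau^2$ summed appropriately yields the claimed $\cR$.

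Next, the in-expectation bound follows from the standard Assouad argument: for any estimator $\hat\bbeta$, $\sup_{\bbeta^*}\EE_{\bbeta^*}\|\hat\bbeta - \bbeta^*\|^2 \gtrsim \sum_{j\in[p]} \tau^2 (1 - \mathrm{TV}) \gtrsim p\tau^2 = \cR$ (after tracking the constants $1/16$ in the definition, which come from the $4\tau^2$ separation, the $1/2$ in the TV bound, and the usual factor from reducing to coordinatewise testing). The slightly nonstandard part is the probability statement. For that I would use a Markov-type / reverse argument: let $Z = \|\hat\bbeta - \bbeta^*\|^2$, which is bounded above by a deterministic quantity only after we note $Z \le (\|\hat\bbeta\| + \tau\sqrt p)^2$ is not bounded in general — so instead I would argue directly on the testing level. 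Reduce to coordinatewise testing as in Assouad: define $\hat\bomega_j = \sgn(\langle \hat\bbeta - \bbeta_{\mathbf 0}, (\Rb)_{\cdot j}\rangle)$ or similar, so that $\|\hat\bbeta - \bbeta_{\bomega}\|^2 \ge \tau^2 \cdot |\{j : \hat\omega_j \ne \omega_j\}|$ up to a constant; then $\PP_{\bbeta_\bomega}(\|\hat\bbeta - \bbeta_\bomega\| \ge \sqrt{\cR/2})$ is lower bounded by the probability that at least $p/2$ (roughly) coordinates are misclassified. Averaging over $\bomega$ uniform and using the pairwise TV $\le 1/2$ bound, the expected fraction of misclassified coordinates is $\ge 1/4$, and then a second-moment / reverse-Markov step (Paley–Zygmund, or just the bound $\EE N \le \PP(N \ge p/2)\cdot p + (1/2)p \cdot \PP(N < p/2)$ rearranged, combined with a variance bound from the near-independence of coordinates) gives that $\PP(N \ge cp) \ge$ an absolute constant, which one can push down to $1/2^8$ with the stated $\sqrt{\cR/2}$ threshold.

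The main obstacle I expect is the last step — converting the in-expectation lower bound on the number of misclassified coordinates into an in-probability lower bound with an explicit absolute constant. The clean route is to exploit that, conditionally on $\Xb$, the coordinate tests are genuinely independent when $\Rb$ is chosen appropriately (the uniform noise on a product structure after rotation does not immediately decouple, so one may instead work with a sub-cube or use a convexity/mixture argument), so that $N$ concentrates and $\PP(N \ge p/4)$ is bounded below by a constant not depending on $p$; alternatively a one-line Paley–Zygmund bound using $\EE N \gtrsim p$ and $\EE N^2 \lesssim p^2$ suffices but needs the second-moment computation, which is where the factor $1/2^8$ presumably comes from. I would also need to be slightly careful that the optimization over $\Rb \in \cO$ is legitimate: since the bound $\mathrm{TV} \le \frac{\tau}{a}\max_j \EE_\Xb\sum_i|(\bX_i\T\Rb)_j|$ holds for every fixed $\Rb$, we are free to pick the $\Rb$ minimizing the right-hand side, and this is what appears inside $\cR$. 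Everything else (the density computation, TV subadditivity, orthogonal invariance of the $\ell_2$ distance) is routine.
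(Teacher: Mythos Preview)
Your construction of the hypercube $\{\tau\Rb\bomega\}$, the TV computation for shifted uniforms, the subadditivity bound $\mathrm{TV}\le \frac{\tau}{a}\sum_i\EE_\Xb|(\bX_i\T\Rb)_j|$, and the choice of $\tau$ are all exactly what the paper does for the in-expectation part; so that half is correct and essentially identical to the paper's argument.

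For the in-probability part your route and the paper's diverge. You propose to apply Paley--Zygmund to the count $N=\sum_j\mathbbm{1}(\text{coord.\ $j$ misclassified})$. This works, and more simply than you fear: you do \emph{not} need any independence of the coordinate tests. Since $N\le p$ deterministically, $N^2\le pN$ and hence $\EE N^2\le p\,\EE N$; Paley--Zygmund then gives $\PP(N\ge \tfrac12\EE N)\ge \tfrac14\EE N/p\ge \tfrac{1}{16}$ once Assouad yields a $\bnu$ with $\EE_{\bnu}N\ge p/4$, and $\{N\ge p/8\}$ is exactly $\{\|\hat\bbeta-\bbeta_{\bnu}\|^2\ge \cR/2\}$. (Your alternative ``rearranged Markov'' bound $\EE N\le p\,\PP(N\ge p/2)+\tfrac{p}{2}\PP(N<p/2)$ is vacuous at $\EE N=p/4$, so drop that option.)

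The paper instead keeps the loss $\|\hat\bbeta-\bbeta^*\|^2$ and deals with its unboundedness by a projection trick: it argues that any minimax estimator may be replaced by one taking values in $S'=\{\bgamma:\exists\bbeta\in S,\ \|\bgamma-\bbeta\|\le\operatorname{diam}(S)\}$ without increasing the risk, which makes $\|\hat\bbeta-\bbeta^*\|^4$ deterministically bounded by a multiple of $(\delta^2p)^2$; Paley--Zygmund applied directly to $\|\hat\bbeta-\bbeta^*\|^2$ then gives the constant $1/2^8$. Your approach avoids this restriction step and is arguably cleaner (and in fact yields a slightly better absolute constant), while the paper's projection-to-a-bounded-set idea is a reusable device worth knowing for other minimax-in-probability statements.
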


We will now look into the specific design we considered in Example \ref{stupid:design:optimality}. \\

\begin{corollary}
Take the random design $\bX_i \sim U(\sqrt{p}\{ \vb_1, \ldots, \vb_p\})$, where $\vb_i, i \in [p]$ denote vectors from any orthonormal basis. Then $\cR$ from \eqref{minimax:risk:def} is 
\begin{align*}
    \cR = \frac{a^2 p^2}{16 n^2}.
\end{align*}
\end{corollary}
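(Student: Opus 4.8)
The plan is to evaluate the infimum in \eqref{minimax:risk:def} in closed form for this design; everything else is arithmetic. First I would note that, since $\Rb$ ranges over all of $\cO$ and the $\vb_j$ form an orthonormal basis, an orthogonal change of variables lets us absorb the basis into $\Rb$ and assume without loss of generality that $\vb_j = \mathbf{e}_j$, the standard coordinate vectors. Then $\bX_i = \sqrt{p}\,\mathbf{e}_{k_i}$ with $k_i$ i.i.d.\ uniform on $[p]$, so $\bX_i\T\Rb$ equals $\sqrt p$ times the $k_i$-th row of $\Rb$, whence $(\bX_i\T\Rb)_j = \sqrt p\,\Rb_{k_i, j}$.

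Next I would take the expectation over $\Xb$, using that the $k_i$ are i.i.d.\ uniform on $[p]$: writing $\mathbf{r}_j$ for the $j$-th column of $\Rb$,
\[
\EE_{\Xb}\sum_{i\in[n]}\bigl|(\bX_i\T\Rb)_j\bigr| \;=\; \sqrt p\, n\, \EE\bigl|\Rb_{k_1,j}\bigr| \;=\; \frac{n}{\sqrt p}\sum_{k\in[p]}\bigl|\Rb_{k,j}\bigr| \;=\; \frac{n}{\sqrt p}\,\|\mathbf{r}_j\|_1 .
\]
Taking the maximum over $j$ and then the infimum over $\Rb\in\cO$, the problem reduces to computing $\inf_{\Rb\in\cO}\max_{j\in[p]}\|\mathbf{r}_j\|_1$.

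Finally I would use the elementary norm comparison $\|\mathbf{r}_j\|_1 \geq \|\mathbf{r}_j\|_2 = 1$, valid because every column of an orthogonal matrix is a unit vector; this gives $\max_{j\in[p]}\|\mathbf{r}_j\|_1 \geq 1$ for every $\Rb\in\cO$, while the value $1$ is clearly attained at $\Rb = \Ib$. Hence $\inf_{\Rb\in\cO}\max_{j\in[p]}\|\mathbf{r}_j\|_1 = 1$, the bracketed quantity in \eqref{minimax:risk:def} equals $n/\sqrt p$, and therefore
\[
\cR \;=\; \frac{a^2 p}{16\,(n/\sqrt p)^2} \;=\; \frac{a^2 p^2}{16\,n^2},
\]
as claimed.

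I do not expect any genuine obstacle: the computation is routine once the reduction to coordinate vectors is made. The only point meriting a moment's thought is the identification of the minimizers of $\Rb \mapsto \max_{j}\|\mathbf{r}_j\|_1$ over $\cO$ — these are exactly the signed permutation matrices, since $\|\cdot\|_1 \geq \|\cdot\|_2$ is tight precisely for signed coordinate vectors — which in particular shows the infimum is achieved and equals $1$.
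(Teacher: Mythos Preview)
Your argument is correct and follows the same route as the paper: rotate the orthonormal basis to the standard one and compute $\EE_{\Xb}\sum_{i}|(\bX_i\T\Rb)_j|$ directly. The paper's proof is a one-liner: it simply takes $\Rb=[\vb_1;\ldots;\vb_p]$, observes $\EE|(\bX_i\T\Rb)_j|=\sqrt{p}\cdot\tfrac{1}{p}=\tfrac{1}{\sqrt{p}}$, and stops. Strictly speaking that only yields $\inf_{\Rb}\max_j\,\EE_{\Xb}\sum_i|(\bX_i\T\Rb)_j|\leq n/\sqrt{p}$, hence $\cR\geq \tfrac{a^2p^2}{16n^2}$, which is the direction needed for the minimax lower bound application. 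You go a step further by proving the matching bound $\max_j\|\mathbf{r}_j\|_1\geq 1$ for every $\Rb\in\cO$ via $\|\cdot\|_1\geq\|\cdot\|_2$, so your proof actually delivers the equality stated in the corollary, whereas the paper's proof is content with the useful inequality.
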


\begin{proof}
Take $\Rb = [\vb_1;\ldots;\vb_p]$ so as to rotate the basis to a standard basis, and observe that $\EE |(\bX_{i}\T\Rb)_j| = \sqrt{p} /p = 1/\sqrt{p}$. From here the claim follows.
\end{proof}

The above example, coupled with the results of Example \ref{stupid:design:optimality} and Theorem \ref{ball:in:convex:hull:follows:rate} illustrate that there exist designs under which the Chebyshev estimator is (nearly) optimal. In the most natural case of standard Gaussian design however, Theorem \ref{theorem:lower_bound} yields a lower bound of the order of $a\sqrt{p}/n$, while the Chebyshev estimator has a guarantee of the form $a p \log p/n$ by Example \ref{Gaussian:example:main:text}. In Appendix \ref{otimal:upper:bound:for:gaussian:design:case} we argue that the lower bound is sharp in this case. There exists an estimator (although non-computationally tractable one) whose rate of estimation is upper bounded by $a \sqrt{p}/n$ in the known $a$ case under standard Gaussian design when $p^3  (\log p )^4 \ll n$. On the other hand, Theorem \ref{Chebyshev:lower:bound:thm} in the supplement argues that in the Gaussian design case with isotropic covariance, with at least constant probability, the Chebyshev estimator makes error $\|\hat \bbeta - \bbeta^*\| \gtrsim a p/(n (\log n)^{3/2})$. Moreover, both results extend to the case where the design $\Xb$ consists of i.i.d. centered sub-Gaussian variables with unit variance, which shows that the Chebyshev estimator is sub-optimal in such situations. This fact also shows that, a general analysis of estimators taking values in the set \eqref{main:optimization:problem} is going to produce sub-optimal results in terms of the dimension dependence in the (sub-)Gaussian case (since the Chebyshev estimator also takes values in the set \eqref{main:optimization:problem}). One may wonder what prevents the Chebyshev estimator from being optimal. Our intuition is that it overfits. As the proof of Theorem \ref{Chebyshev:lower:bound:thm} shows, the value of $\hat a$ is much smaller than the true value of $a$ which is indicative of overfitting. Another related reason in addition to overfitting could be that it is not exploiting the knowledge of $a$ properly, and perhaps one can show that the best risk equivariant estimator (which is the centroid of \eqref{main:optimization:problem}) may work optimally, although this appears difficult to prove. One way of proving such a result could be to follow calculations of \cite{ibragimov2013statistical} which provide a general theory for Bayesian estimators (and the best risk equivariant estimator is generalized Bayesian with an improper prior), specifically their Theorem 5.2. That result however, does not track the dimension dependence and we failed to prove an optimal result for the best risk equivariant estimator or for other Bayesian estimators using their method. There are of course many other examples of high-dimensional settings where the MLE fails to be minimax optimal. One such recent example is given by \cite{neykov2022minimax} where it is argued that in general the MLE is suboptimal for the Gaussian sequence model with convex constraint, but there exist different minimax optimal estimators.


The astute reader would notice that in almost all of our upper bounds examples we assumed the quantity $\lambda_{\min}(\EE \bX \bX\T)$ is bounded from below. This quantity does not explicitly appear in our lower bound above. Below we will show a separate lower bound based on the proof of Theorem \ref{theorem:lower_bound}, which illustrates that the quantity $\lambda_{\min}(\EE \bX \bX\T)$ cannot be too small if one wants to attain reasonable bounds on the estimation error $\|\hat \bbeta - \bbeta^*\|$. 

\begin{proposition}\label{modded:lower:bound:theorem}Assume the same setting as in Theorem \ref{theorem:lower_bound}. Let 
\begin{align*}
    \cR := \frac{a^2}{16 (\inf_{\vb \in \mathbb{S}^{p-1}} \EE |\bX\T \vb|)^2} \geq \frac{a^2}{16 \lambda_{\min}(\EE \bX\bX\T)},
\end{align*}
Then the following inequalities hold:
\begin{align*}
    \inf_{\hat \bbeta} \sup_{\bbeta^* \in \RR^p} \EE_{\bbeta^*} \|\hat \bbeta - \bbeta^*\|^2 \geq \cR,
\end{align*}
and in addition
\begin{align*}
    \inf_{\hat \bbeta} \sup_{\bbeta^* \in \RR^p} \PP_{\bbeta^*} (\|\hat \bbeta - \bbeta^*\| \geq \sqrt{\cR/2}) \geq \frac{1}{2^8 p^2}.
\end{align*}
\end{proposition}

\begin{remark}Our result above is not entirely satisfactory, since it does not capture any dimension dependence. Furthermore, in some examples such as Example \ref{most:important:example} the quantity $\lambda_{\min}(\EE \bX\bX\T)$ appears to the power of $5/2$ in the denominator which is not matched by the lower bound above. The latter can be remedied by imposing a lower bound on $\inf_{\vb \in \mathbb{S}^{p-1}} \EE |\bX\T\vb|$ in place of $\lambda_{\min}(\EE \bX\bX\T)$ in Example \ref{most:important:example}. We do not pursue that further here however.
\end{remark}

\section{The $\ell_1$ penalized $\ell_{\infty}$ estimator (aka Chebyshev's LASSO)}\label{LASSO:section}

Another problem of interest is whether we can extend the $\ell_{\infty}$ estimator \eqref{chebyshev_est} to high-dimensional situations where $\bbeta^* \in \RR^p$ is $s$-sparse. Consider the program
\begin{align}
\min a + \lambda \|\bbeta\|_1\quad\text{ subject to }|Y_i - \bX_i\T \bbeta| \leq a, \forall i\in[n]
\label{chebyshev_lasso_est}
\end{align}

Luckily one need not write new software to solve problem \eqref{chebyshev_lasso_est} as it is a linear program. A similar but dual version of program \eqref{chebyshev_lasso_est} has recently been considered by \citep{du2019optimal}, where it was argued that the $\ell_\infty$ loss function is the most natural loss for a certain problem in fuselage assembly. \cite{du2019optimal} also provide some theoretical guarantees on their version of the program, however they failed to recognize that this program will converge at much faster rates than the usual LASSO in the case of uniform errors. 

The following Theorem \ref{theorem:l2_bound_chebyshev_lasso} shows that under some conditions on the design matrix $\Xb$ and the growth rate of the sparsity $s$ and the ambient dimension $p$ with respect to the sample size $n$, the estimator obtained via \eqref{chebyshev_lasso_est} achieves a rate faster than the LASSO estimation rate $s \sqrt{\log p/n}$  (for the $\ell_1$ norm) \citep[see Chapter 7]{wainwright2019high}. Before presenting the theorem we need to introduce the Restricted Eigenvalue (RE) condition \citep{bickel2009simultaneous}, which is the least restrictive eigenvalue condition imposed on the population covariance matrix in order to provide good convergence guarantees for $\ell_1$-based methods. First, let us define a set $\cC(S,\gamma)$ which is relevant to the RE condition.
\begin{definition}
\label{re_pre}
For a given subset $S \subset [p]$ and a constant $\gamma\geq1$, the set $\mathcal{C}(S, \gamma)$ is defined as
\begin{align*}
    \mathcal{C}(S, \gamma) := \{\vb\in\mathbb{R}^p: \|\vb_{S^c}\|_1 \leq \gamma \|\vb_{S}\|_1 \}.
\end{align*}
\end{definition}
Next, the Restricted Eigenvalue condition of order $s$ with parameters $\kappa, \gamma$ is denoted as $RE(\kappa,\gamma,s)$ and defined as following.
\begin{definition}
\label{re_condition}
For a constant $\kappa>0$, we say that a symmetric matrix $\Ab$ satisfies the condition $RE(\kappa,\gamma,s)$ if 
\begin{align*}
    \vb\T\Ab\vb \geq \kappa^2 \|\vb\|^2 \quad \text{for all } \vb \in \cC(S,\gamma),
\end{align*}
holds uniformly for all sets $S$ with cardinality $s$.
\end{definition}

Before we state the main result of this section, we will also formally introduce sub-Gaussian and isotropic random variables.
\begin{definition}[Sub-Gaussian and Isotropic Random Vectors]\label{sub-Gaussian:variable:def}
A random vector $\bzeta \in \RR^p$ is called isotropic if $\EE \bzeta\bzeta\T = \Ib$. A random vector $\bzeta \in \RR^p$ is called $\gamma$ sub-Gaussian  if for any $\vb \in \mathbb{S}^{p-1}$
\begin{align*}
\inf\{t: \EE \exp(t^{-2} (\vb\T \bzeta)^2) \leq 2\} \leq \gamma.
\end{align*}

\end{definition}

\begin{theorem}
\label{theorem:l2_bound_chebyshev_lasso}
Suppose $\bX = \bSigma^{\frac{1}{2}}\bzeta$ where $\bzeta\in\mathbb{R}^p$ is an isotropic $\gamma$ sub-Gaussian vector. Let the predictors $\bX_i \sim \cL(\bX)$ be i.i.d., where $\cL(\bX)$ denotes the law of the random varaible $\bX$. Additionally we assume that the Gram matrix $\bSigma = \EE \bX \bX\T$ satisfies the $RE(\kappa,2,s)$ condition for a constant $\kappa>0$, $\|\bSigma\|_{\operatorname{op}}$ is bounded from above and $\bSigma_{jj} = 1$ for all $j \in [p]$. 
If $s\leq p/2$, and $s ((\log (5ep/s) \vee \log p) \vee \log p (\log n)^2)) \leq p$ then for $\lambda = \frac{\kappa}{(4 + \epsilon)\sqrt{s}\log n}$ for any small $\epsilon > 0$, we have
\begin{align*}
    \|\hat\bbeta-\bbeta^*\|_1 \lesssim_{\gamma, \|\bSigma\|_{\operatorname{op}}, \kappa} \frac{s^{3/2} ((\log (5ep/s) \vee \log p) \vee \log p (\log n)^2))}{n}. 
\end{align*}
with probability converging to $1$, where $\lesssim_{\gamma, \|\bSigma\|_{\operatorname{op}}, \kappa}$ hides constants depending on $\gamma, \|\bSigma\|_{\operatorname{op}}, \kappa$.
\end{theorem}

\begin{remark} The above theorem shows that Chebyshev's LASSO can be (much) more accurate than the regular LASSO under certain assumptions. Importantly, note that the optimal choice of $\lambda$ does not seem to depend on the parameter $a$, which will be affecting the LASSO tuning parameter (since the variance of a uniform distribution is $\frac{a^2}{3}$). However, it does depend on the (potentially) unknown sparsity $s$, and hence in practice some tuning will be required. This can be done with cross-validation, e.g. We do not provide a tight upper bound on the $\ell_2$ norm, but it should be clear that the $\|\hat \bbeta - \bbeta^* \| \leq \|\hat \bbeta - \bbeta^* \|_1$ and hence the above bound is valid in terms of the $\ell_2$ norm too. In addition, we would like to mention that the factor $\log n$ that appears in the upper bound on $\|\hat \bbeta - \bbeta^* \|_1$ and in the definition of $\lambda$ may be replaced by any slowly diverging sequence in $n$. Finally we give some intuition on why we obtain $s^{3/2}$ in the upper bound for $\|\hat \bbeta - \bbeta^* \|_1$. Recall that (dropping log factors) the rate of the Chebyshev estimator for Gaussian design is $p/n$, and we obtain $s^{3/2}/n$ bound for the $\ell_1$ norm in the high-dimensional setting (i.e. it is $\sqrt{s}$ more than the bound $s/n$). This is similar to how in the Gaussian design case in regression with Gaussian errors the upper bound under $\ell_2$ loss is $\sqrt{p/n}$ but the LASSO obtains a rate under $\ell_1$ loss equal to $s/\sqrt{n}$ (dropping log factors), i.e., we multiply by $\sqrt{s}$. Intuitively, this comes from the bound $\|\vb\|_1 \leq \sqrt{s} \|\vb\|$ for any $\vb$ with $\|\vb\|_0 \leq s$ where $\|\vb\|_0$ is the number of non-zero entries in $\vb$.


\end{remark}

\section{Simulations}\label{sim:section}

In this section we provide several brief numerical experiments in support of our findings.

\subsection{Chebyshev estimator}

We begin with the Chebyshev estimator. We use three designs to construct our experiments --- standard Gaussian design, Rademacher design and uniform on the unit sphere design. We remind the reader that these three designs were considered as examples after Theorem \ref{lemma:wendel_v3}, and we know from our theorems that for the first two designs $\|\hat \bbeta - \bbeta^*\| \lesssim (p\log p)/n$ while for the last design we have $\|\hat \bbeta - \bbeta^*\| \lesssim (p\sqrt{p}\log p)/n$. We constructed datasets of multiple sizes, one for each pair $(n,p)$ where $n \in \{30,40,50,60,70,80,90,100,110\}$ and $p \in \{4,8,12,16,20\}$. Here we set $\bbeta^*$ to have its first $p/2$ entries equal to $1$ and the remaining entries equal to $-1$, while $a = 2$. For each dataset we computed the Chebyshev estimator $100$ times and averaged  $\|\hat \bbeta - \bbeta^*\|$. We then plotted these results against $p/n$ and $(p\sqrt{p})/n$ since we believe the extraneous $\log p$ factors  that we obtained are artifacts of the proof. Figure \ref{figure:one} illustrates our findings. We see a near perfect linear alignment. This empirical evidence suggests that our simple analysis is nearly tight for those designs. This is also corroborated by Theorem \ref{Chebyshev:lower:bound:thm} in the supplementary material.

\begin{figure}[t!]
\includegraphics[scale=.4]{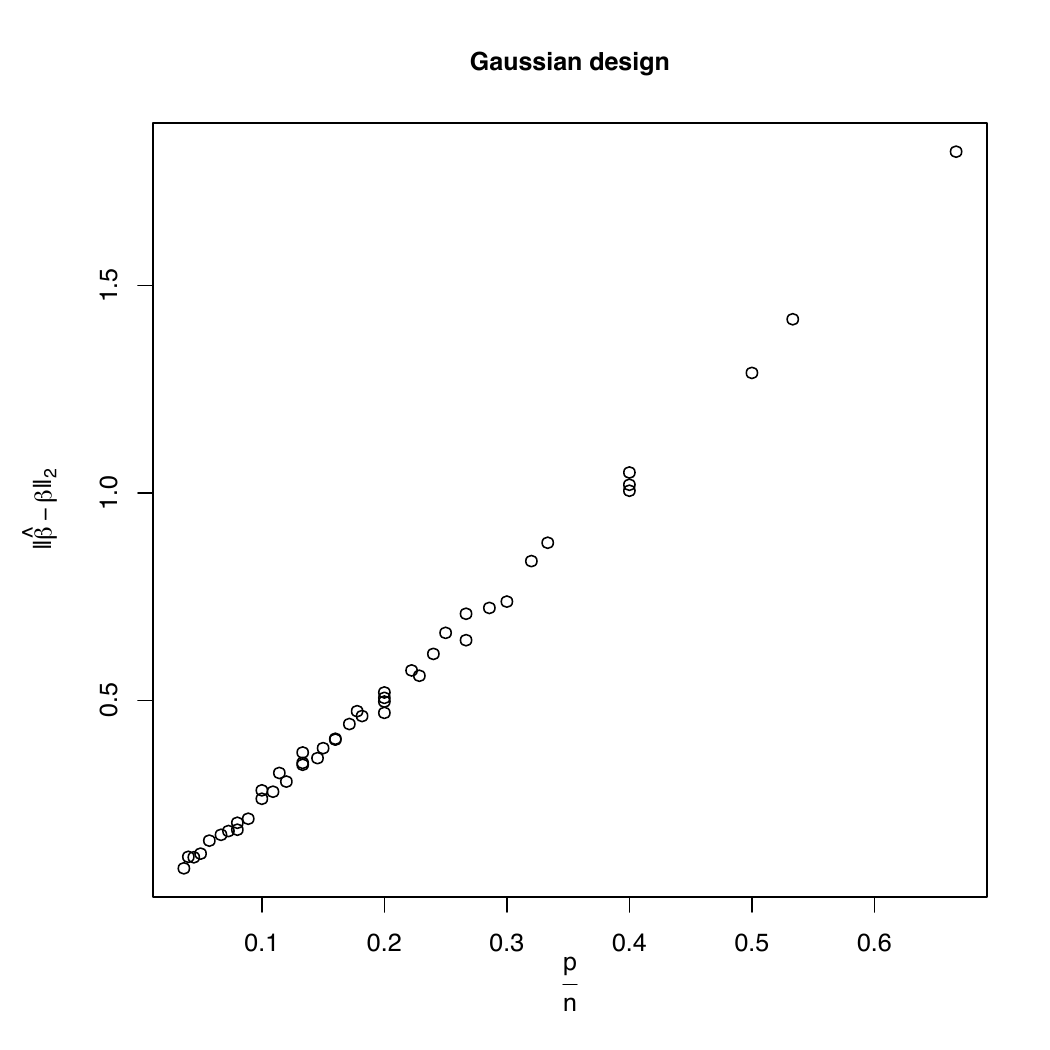}\hfill
\includegraphics[scale=.4]{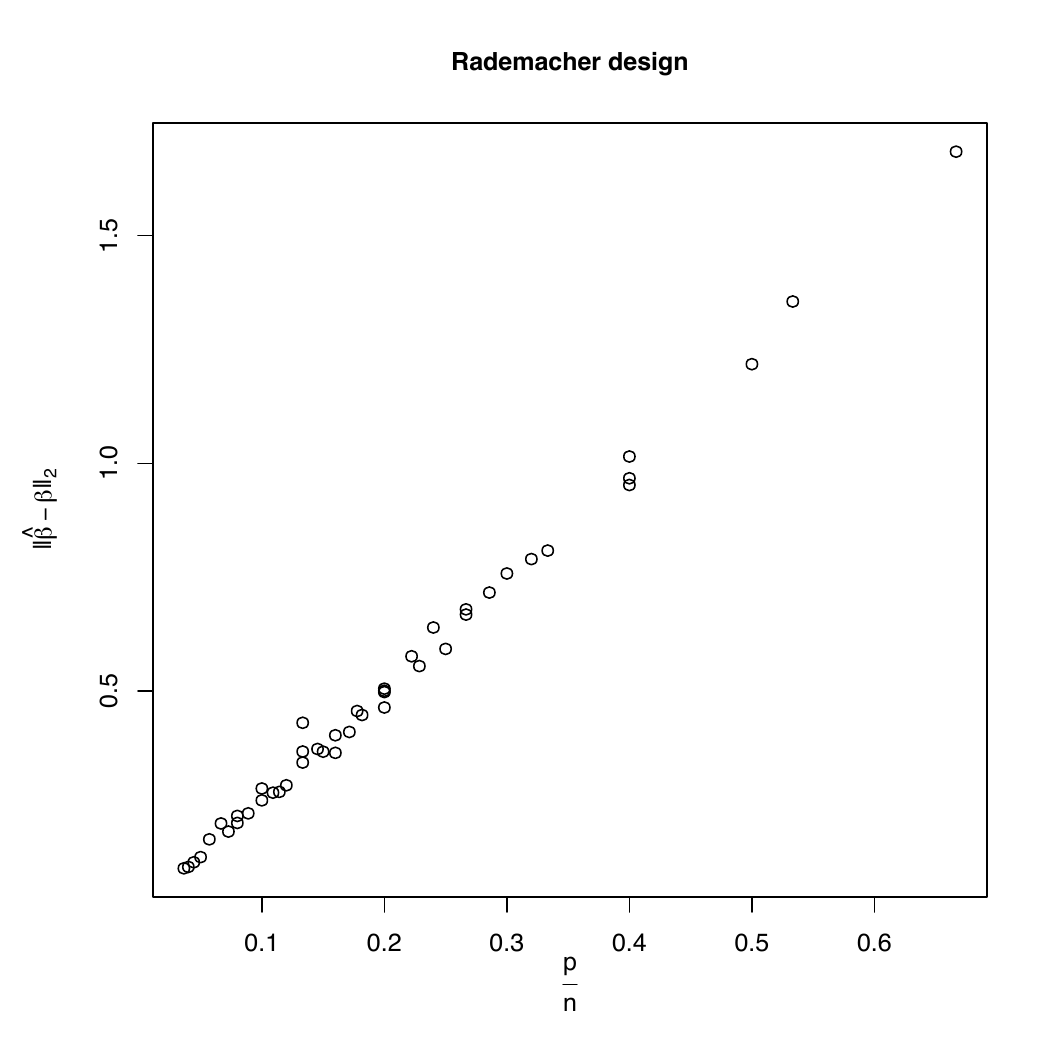}\hfill
\centering\includegraphics[scale=.4]{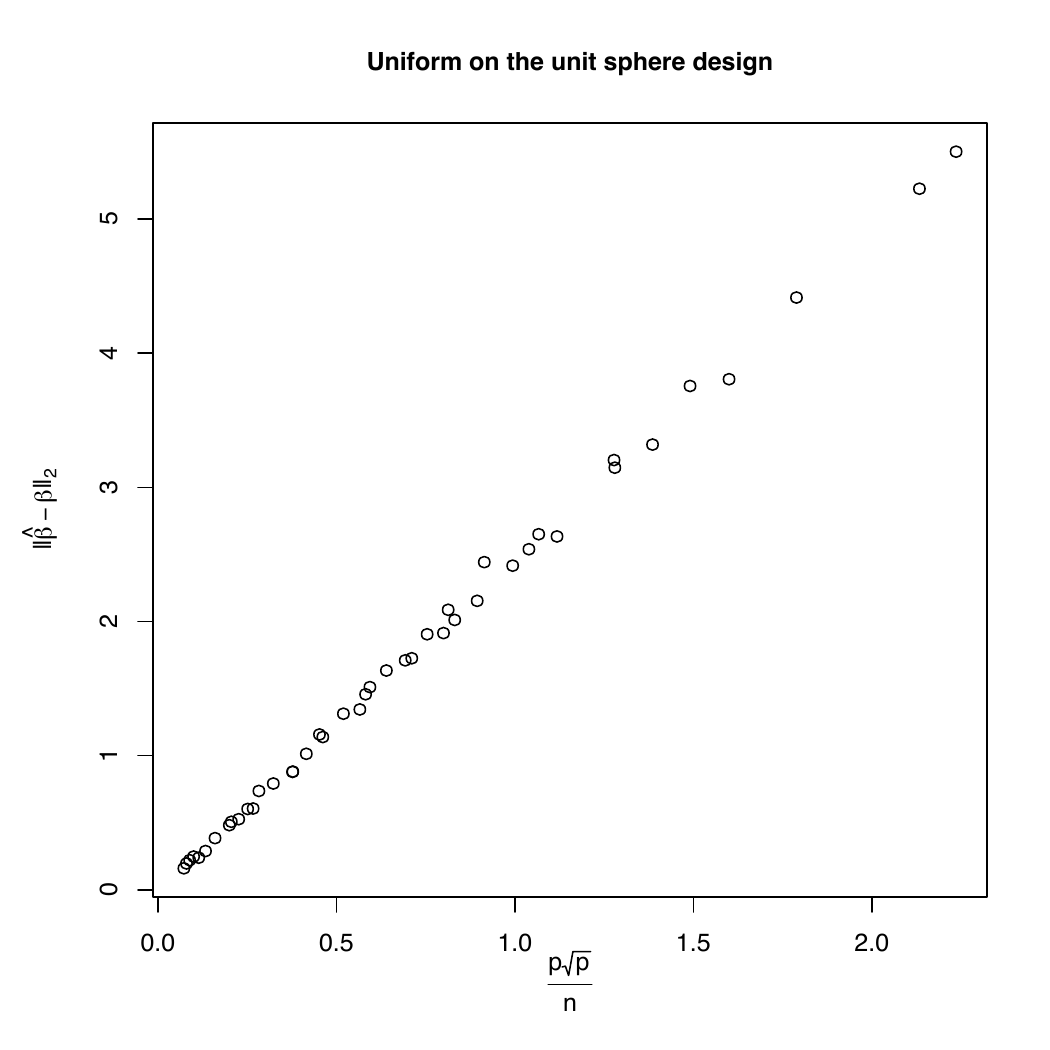}
\caption{From left to right: Gaussian design, Rademacher design, and uniform on the unit sphere design. We observe near perfect linear patterns which suggests that our analysis is nearly tight. }\label{figure:one}
\end{figure}

\subsection{Chebyshev's LASSO}

In order to illustrate the superiority of Chebyshev's LASSO vs the regular LASSO, we constructed examples where the $\bbeta^*$ vector is very sparse in comparison to the sample size. This is in order to make the requirement  $\sqrt{s\log p}\log n \ll \sqrt{p}$ hold at least approximately. We considered two possible sample sizes $n = 600, 800$, two possible values of the sparsity of $\bbeta^*$: $s = 4, 10$ and the ambient dimension is $p = 1000 + s$. Here $\bbeta^*$ has its first $s/2$ entries equal to $1$, the next $s/2$ entries equal to $-1$ and all remaining entries are $0$. We also set $a = 5$ throughout the simulations. We tuned both Chebyshev's LASSO and the regular LASSO. For the tuning of Chebyshev's LASSO we considered six equispaced values in the range $[.1\sqrt{\log p/n^{.4}}, 2\sqrt{\log p/n^{.4}}]$, and after we run the analysis we pick the value  $\hat \bbeta$ which is closest to the true $\bbeta^*$ in terms of the $\ell_1$ norm. Similarly, to tune the LASSO, we considered the default $\lambda$ values given by the \verb|cv.glmnet| function of the \verb|glmnet| package in \verb|R| (which are calculated from the data and are around 100), and used the one that gives the closest $\hat \bbeta$ to the true $\bbeta^*$ in $\ell_1$ norm. It is evident from the results of Table \ref{table:one} that Chebyshev's LASSO dominates the LASSO in all $4$ settings considered. 
%

\begin{table*}
           \centering
           \captionsetup[subtable]{position = below}
          \captionsetup[table]{position=top}
           \caption{Summary of simulation results}\label{table:one}
           \begin{subtable}{0.3\linewidth}
               \centering
               \begin{tabular}{|c|cc|}
               \hline
		  \diaghead(-1,1){aa}{n}{s}& 4 & 10 \\ 
		\hline
		 600 & 1.06 & 3.4 \\ 
		800 & 0.83 & 2.4 \\ 
		\hline
               \end{tabular}
               \caption{Chebyshev's LASSO $\|\hat \bbeta - \bbeta^*\|_1$ averaged over $100$ simulations.}
               \label{tab:dimFFT}
           \end{subtable}%
           \hspace*{4em}
           \begin{subtable}{0.3\linewidth}
               \centering
               \begin{tabular}{|c|cc|}
                   \hline
		  \diaghead(-1,1){aa}{n}{s}& 4 & 10 \\ 
		  \hline
 		600 & 1.53 & 3.7 \\ 
		800 & 1.32 & 3.11 \\ 
		\hline
               \end{tabular}
                \caption{LASSO $\|\hat \bbeta - \bbeta^*\|_1$ averaged over $100$ simulations.}
                 \label{tab:dimGMM}
           \end{subtable}
       \end{table*}

\section{Discussion}\label{discussion:section}

In this paper we presented some non-asymptotic bounds on the Chebyshev estimator in linear regression with uniform errors.  In addition we demonstrated that under certain assumptions Chebyshev's LASSO can strictly dominate the usual LASSO. As we remarked our approach is immediately extendible to symmetric bounded noise, and with a little more effort can be extended to asymmetric noise as well. There are a lot of interesting open questions. Unlike the asymptotic approach in \cite{knight2017asymptotic}, our analysis does not rest on the epi-convergence techniques; however, it is interesting whether such epi-convergence techniques could be directly turned into finite sample results. 

Next, we discuss the lower bounds. As it stands, our Theorem \ref{theorem:lower_bound} does not produce a matching lower bound to the bound in Theorem \ref{ball:in:convex:hull:follows:rate} under standard Gaussian design, e.g. On the other hand in Appendix \ref{otimal:upper:bound:for:gaussian:design:case} we establish that the lower bound is sharp in the i.i.d. standard (sub-)Gaussian design case, at least when $p$ is not too large compared to $n$. Hence a question arises: is the Chebyshev estimator truly sub-optimal or the gap is this sub-optimality introduced by our imprecise analysis? This question is closed by Theorem \ref{Chebyshev:lower:bound:thm} which argues that the dimension dependence we obtain for the Chebyshev estimator with i.i.d. (sub-)Guassian design is optimal (up to logarithmic factors). This illustrates the interesting phenomenon that the Chebyshev estimator (which is the MLE in the unknown $a$ case and is an MLE in the known $a$ case) is provably sub-optimal in terms of the dimension dependence. One explanation of this is that it overfits, and in addition it does not utilize the knowledge of the constant $a$ whereas the optimal estimator estimator we develop in Theorem \ref{optimal:estimator:thm} relies on $a$ being known. An open question is whether one can improve the lower bound Theorem \ref{theorem:lower_bound} to capture the unknown $a$ case.


There are also a multitude of questions left in for the high-dimensional Chebyshev estimator. First it is not clear whether the rate that Theorem \ref{theorem:l2_bound_chebyshev_lasso} is optimal. In fact is likely suboptimal given the sub-optimality of the Chebyshev estimator in low dimensional situations. Second, deriving matching lower and upper bounds sounds like a challenging but interesting question for future research. 

Finally, if one is interested in inference, a possible approach that works for some non-regular models was recently proposed by \cite{wasserman2020universal}. Unfortunately, this approach has problems with models with uniform distributions (see the uniform distribution example before section 4 \citep{wasserman2020universal}), but there may exist smart ways of tweaking it to make it work in our setting. We defer this to future work.

%
%
%

\section{Acknowledgements}

The authors would like to thank Sivaraman Balakrishnan for inspiring discussions on the topic, and in particular the lower bounds and his advice on the presentation of this work. The second author is also indebted to Alexandre Tsybakov and Tony Cai for communicating to him their belief that the lower bound is tight, and one should try to improve the upper bound in the Gaussian case. Finally the authors would like to express their gratitude to the AE and two anonymous referees for their insightful suggestions which led to substantial improvements of the manuscript.

\bibliographystyle{imsart-nameyear.bst}
\bibliography{yufeibib}

\newpage
\appendix



\section{Examples}\label{appendix:examples}

\begin{example}\label{Gaussian:example}
The first application of Corollary \ref{cor:paley:zygmund} with $\alpha =1$ and $q = 2$ is for Gaussian design. Suppose $\bX_i \sim N(0, \bSigma)$, where $\bSigma$ has smallest eigenvalue $\lambda_{\min} > 0$. It follows that $\tilde \bX_i = \eta_i \bX_i \sim N(0,\bSigma)$. Since, $\vb\T \tilde\bX \sim N(0, \vb\T \bSigma \vb)$ we have 
\begin{align*}
\EE |\vb\T \tilde\bX| = \sqrt{2/\pi}\sqrt{\vb\T \bSigma \vb} \geq \sqrt{2\lambda_{\min}/\pi}, ~~~ \EE (\vb\T \tilde\bX)^2 = \vb\T \bSigma \vb.
\end{align*}
 Set $\theta = 1/2$ (so $\xi = \sqrt{\frac{\lambda_{\min}}{8\pi}}$) to obtain $\rho = c^{-1} + (1 - 1/(2\pi))/2$. Set $c = 8\pi$ to obtain $\rho = 1/2 - 1/(8\pi)$. We have
\begin{align*}
    \PP(\xi \mathbb{B}^{p}_2 \not \subset \operatorname{conv}(\tilde\bX_1, \ldots, \tilde\bX_m)) & \leq (1 + 32\sqrt{2}\pi^{3/2}\sqrt{\operatorname{tr}(\bSigma)}/\sqrt{\lambda_{\min}})^p (1 - 1/(8\pi))^m \\
& \leq  (1 + 32\sqrt{2}\pi^{3/2} \sqrt{\operatorname{tr}(\bSigma)}/\sqrt{\lambda_{\min} })^p \exp(-m/(8\pi)),
\end{align*}
where we used that by Jensen's inequality $\EE \|\tilde\bX\| \leq \sqrt{\operatorname{tr}(\bSigma)}$. Hence the design contains a sphere of constant radius with probability at least $1 - \gamma$, so long as $m > 8\pi p \log (1 + 32\sqrt{2}\pi^{3/2} \sqrt{\operatorname{tr}(\bSigma)}/\sqrt{\lambda_{\min} }) + 8\pi \log \gamma^{-1}$.

Applying Theorem \ref{ball:in:convex:hull:follows:rate} now gives that with probability at least $1 -\gamma - \exp(- L^2/(8L/3 + 2))$
\begin{align*}
\|\hat \bbeta - \bbeta^*\| \leq \frac{a (L + 1)(8\pi p \log (1 + 32\sqrt{2}\pi^{3/2}\sqrt{\operatorname{tr}(\bSigma)}/\sqrt{\lambda_{\min} }) + 8\pi \log \gamma^{-1} + 1)}{\xi n},
\end{align*}
where recall that $\xi = \sqrt{\frac{\lambda_{\min}}{8\pi}}$.
\end{example}

\begin{example} Our next application includes applying Corollary \ref{cor:paley:zygmund} with $\alpha = 1$ and $q = 2$ to Rademacher design. Let $\bX_{ij}$ be i.i.d. Rademacher random variables. In this example, the first variable can also optionally be an intercept. In any case, it follows that $\tilde \bX_i = \eta_i \bX_{i}$ are Rademacher vectors. 

By Khintchine's inequality \citep{haagerup1978best} 
 we now have that for any $\vb \in \mathbb{S}^{p-1}$, $1/\sqrt{2} \leq \EE |\langle \vb, \tilde \bX_i\rangle|$. In addition, clearly $\EE (\langle \vb, \tilde \bX_i\rangle)^2 = 1$. It follows that (plugging in $\theta = 1/2$ hence $\xi = 1/(4\sqrt{2})$) 
\begin{align*}
    \rho = c^{-1} + (1 - 1/8)/2 = c^{-1} + 7/16.
\end{align*}
and hence for $c > 32$ we have $\rho = 15/32 < 1/2$. We conclude that 
\begin{align*}
    \PP(\xi \mathbb{B}^{p}_2 \not \subset \operatorname{conv}(\tilde \bX_1, \ldots, \tilde \bX_m)) \leq (1 + 256\sqrt{2}\sqrt{p})^p (1 - 1/32)^m \leq (1 + 256\sqrt{2}\sqrt{p})^p\exp(-m/32).
\end{align*}
It follows there will be a $\xi$-sphere with probability at least $1 - \gamma$ as long as $m > 32 (p \log (1 + 256\sqrt{2}\sqrt{p}) + \log \gamma^{-1})$. This combined with the result of Theorem \ref{ball:in:convex:hull:follows:rate} shows that:
\begin{align*}
\|\hat \bbeta - \bbeta^*\| \leq \frac{a(L + 1)(32 (p \log (1 + 256\sqrt{2}\sqrt{p}) + \log \gamma^{-1}) + 1)}{\xi n},
\end{align*}
with probability at least $1 - \gamma - \exp\bigg(\frac{- L^2/2}{\frac{4}{3}L + 1}\bigg)$.
\end{example}

\begin{example} Let $\bX_i$ have a uniform distribution on the sphere. Then $\tilde \bX_i \stackrel{d}{=} \bX_i$. Let $\gb$ be a standard Gaussian random vector, and observe that $\tilde \bX_i \stackrel{d}{=} \frac{\gb}{\|\gb\|}$. We have $\sqrt{\frac{2}{\pi}} = \EE |\vb\T \frac{\gb}{\|\gb\|}| \|\gb\| = \EE |\vb\T \frac{\gb}{\|\gb\|}|\EE \|\gb\|$, so that $\EE |\vb\T \frac{\gb}{\|\gb\|}| \geq \frac{\sqrt{\frac{2}{\pi}}}{\EE \|\gb\|}$. Now use $\EE \|\gb\| \leq \sqrt{p}$ hence $\EE |\vb\T \frac{\gb}{\|\gb\|}| \geq \frac{\sqrt{\frac{2}{\pi}}}{\sqrt{p}}$. Next $\EE (\vb\T \frac{\gb}{\|\gb\|})^2 = \frac{1}{\EE\|\gb\|^2} = p^{-1}$, and therefore $\frac{(\EE |\vb\T \tilde \bX|)^2}{\EE[(\vb\T \tilde\bX)^2]} \geq \frac{2}{\pi}$. Applying Corollary \ref{cor:paley:zygmund} with $\alpha = 1$, $q = 2$, $\theta = \frac{1}{2}$, $\xi = \sqrt{\frac{1}{8\pi p}}$, and $\rho = \frac{1}{8\pi} + \frac{1 - \frac{1}{2\pi}}{2} = \frac{1}{2} - \frac{1}{8\pi}$ we now have that $\xi \mathbb{B}^{p}_2 \subset \operatorname{conv}(\tilde \bX_1,\ldots, \tilde \bX_m)$ with probability at least $1-\gamma$ whenever $m \geq 8\pi(p \log (1 + \frac{16\pi}{\xi}) + \log \gamma^{-1})$. By Theorem \ref{ball:in:convex:hull:follows:rate} we now have that 
\begin{align*}
\|\hat \bbeta - \bbeta^*\| \leq \frac{a(L + 1)(8\pi(p \log (1 + \frac{16\pi}{\xi}) + \log \gamma^{-1}) + 1)}{\xi n},
\end{align*}
with probability at least $1 -\gamma - \exp\bigg(\frac{- L^2/2}{\frac{4}{3}L + 1}\bigg)$.
 \end{example}

\begin{example} In this example we analyze a centered elliptical distribution $\bX$. This generalizes two of our previous examples where we considered Gaussian and uniform on the unit sphere distributions. By a stochastic representation theorem for centered elliptical distributions \cite[see Proposition 4.1.2 of][e.g.]{tong2012multivariate} we know that one can generate a centered elliptical random variable as $\bX \stackrel{d}{=} R \Ab \bU$, where $R \geq 0$ is a non-negative random variable independent of $\bU$, $\bU \stackrel{d}{=} \frac{\gb}{\|\gb\|}$  is distributed uniformly over the unit sphere $\mathbb{S}^{p-1}$, and $\Ab \in \RR^{p \times p}$ is a constant matrix. Suppose $\bSigma = \Ab \Ab\T$ has smallest eigenvalue $\lambda_{\min}$ bounded away from $0$ and largest eigenvalue $\lambda_{\max}$ being bounded. We have $\tilde \bX \stackrel{d}{=} \bX$. In what follows we also assume $\EE R > 0$ and $\EE R^2 < \infty$.

We now evaluate for a unit vector $\vb$, $\EE |\vb\T R \Ab \bU|= \EE |R| \EE |\vb \T \Ab \bU| = \frac{\EE R \|\vb\T \Ab\|\sqrt{\frac{2}{\pi}}}{\EE \|\gb\|} \geq \frac{\EE R\lambda_{\min}^{\frac{1}{2}}\sqrt{\frac{2}{\pi}}}{\sqrt{p}}$. On the other hand, $\EE  (\vb\T R \Ab \bU)^2 = \frac{\EE R^2 \|\vb\T \Ab\|^2}{\EE \|\gb\|^2} = \frac{\EE R^2 \|\vb\T \Ab\|^2}{p}$. Hence $\frac{(\EE |\vb\T R \Ab \bU|)^2}{\EE  (\vb\T R \Ab \bU)^2} \geq \frac{\frac{2}{\pi}(\EE R)^2}{\EE R^2}$. Next we upper bound $\EE \|\tilde \bX\| \leq \sqrt{ \EE \|\tilde \bX\|^2 } \leq \sqrt{\EE R^2 \lambda_{\max}}$. 

Set $\theta = \frac{1}{2}$, $c = \frac{8 \pi \EE R^2}{(\EE R)^2}$ to obtain $\rho = c^{-1} + \frac{1 -  \frac{\frac{2}{\pi}(\EE R)^2}{4 \EE R^2}}{2} = \frac{1}{2} - \frac{(\EE R)^2}{8\pi \EE R^2}$. Then by Corollary \ref{cor:paley:zygmund} with $\alpha = 1$, $q = 2$ we obtain
\begin{align*}
    \PP(\xi \mathbb{B}^{p}_2 \not \subset \operatorname{conv}(\tilde\bX_1, \ldots, \tilde\bX_m)) \leq \bigg(1 + \frac{C' \sqrt{p} (\EE R^2)^{3/2} \sqrt{\lambda_{\max}}}{(\EE R)^{3}\sqrt{\lambda_{\min}}}\bigg)^p\bigg(1 - \frac{(\EE R)^2}{8 \pi \EE R^2}\bigg)^{m},
\end{align*}
for an absolute constant $C'$. As before if $m > \frac{8 \pi \EE R^2}{(\EE R)^2} \bigg(p \log \bigg(1 + \frac{C' \sqrt{p} (\EE R^2)^{3/2} \sqrt{\lambda_{\max}}}{(\EE R)^{3}\sqrt{\lambda_{\min}}}\bigg) + \log \gamma^{-1}\bigg)$, we have 
\begin{align*}
    \PP(\xi \mathbb{B}^{p}_2 \not \subset \operatorname{conv}(\tilde\bX_1, \ldots, \tilde\bX_m)) \leq \gamma,
\end{align*}
and hence by Theorem \ref{ball:in:convex:hull:follows:rate} we have that with probability at least $1 -\gamma - \exp\bigg(\frac{- L^2/2}{\frac{4}{3}L + 1}\bigg)$:
\begin{align*}
\|\hat \bbeta - \bbeta^*\| \leq \frac{a(L + 1) \bigg(\frac{8 \pi \EE R^2}{(\EE R)^2} \bigg(p \log \bigg(1 + \frac{C' \sqrt{p} (\EE R^2)^{3/2} \sqrt{\lambda_{\max}}}{(\EE R)^{3}\sqrt{\lambda_{\min}}}\bigg) + \log \gamma^{-1}\bigg) + 1\bigg)}{\xi n},
\end{align*}
where $\xi = \frac{\EE R\lambda_{\min}^{\frac{1}{2}}\sqrt{\frac{2}{\pi}}}{4\sqrt{p}}$.
\end{example}

\begin{example} \label{most:important:example:appendix} We now give a general example which only assumes that $\inf_{\vb \in \mathbb{S}^{p-1}}\EE \vb\T \bX\bX\T \vb = \lambda_{\min} > 0$ and $\sup_{\vb\in \mathbb{S}^{p-1}}\EE (\vb\T \bX)^4 \leq C < \infty$. The latter happens in the case when the variables $\bX$ are sub-Gaussian e.g. (in other words we assume that $\EE \exp(t^{-2} (\vb\T \bX)^2) \leq 2$ for some $t \in \RR^+$ for any $\vb \in \mathbb{S}^{p-1}$ (see also Definition \ref{sub-Gaussian:variable:def} in Section \ref{LASSO:section} for a formal definition)). Indeed, this is so by Lemma 5.5 of \cite{Vershynin2012Introduction}.

Clearly, under these assumptions $\inf_{\vb \in \mathbb{S}^{p-1}}\EE \vb\T \tilde \bX\tilde \bX\T \vb = \lambda_{\min} > 0$ and $\sup_{\vb\in \mathbb{S}^{p-1}}\EE (\vb\T\tilde  \bX)^4 \leq C < \infty$. Using Corollary \ref{cor:paley:zygmund} with $\alpha = 2$, $q = 2$, $\theta = \frac{1}{2}$ we have that $\rho = c^{-1} + \frac{1 - \frac{1}{4}\frac{\lambda^2_{\min}}{C}}{2}$ and $\xi = \frac{\lambda^{\frac{1}{2}}_{\min}}{2 \sqrt{2}}$. Setting $c^{-1} = \frac{1}{16}\frac{\lambda^2_{\min}}{C}$, gives $\rho = \frac{1}{2} - \frac{1}{16}\frac{\lambda^2_{\min}}{C} < \frac{1}{2}$. Next we can roughly upper bound $\EE \|\tilde \bX\| \leq \sqrt{\EE \|\tilde \bX\|^2} \leq \sqrt{pC^{1/2}}$, where we used that for any random variable $X$ we have $\EE X^2 \leq \sqrt{\EE X^4}$.

By Corollary \ref{cor:paley:zygmund} we have
\begin{align*}
    \PP(\xi \mathbb{B}^{p}_2 \not \subset \operatorname{conv}(\tilde\bX_1, \ldots, \tilde\bX_m)) \leq \bigg(1 + 64\sqrt{2} \frac{C}{\lambda^{5/2}_{\min}}  \sqrt{pC^{1/2}}\bigg)^p (1 - \frac{1}{16}\frac{\lambda^2_{\min}}{C})^m.
\end{align*}
Hence whenever $m > \frac{16C}{\lambda^2_{\min}}\bigg(p \log\bigg(1 + 64\sqrt{2}\frac{C^{5/4}}{\lambda_{\min}^{5/2}}p^{1/2}\bigg) + \log \gamma^{-1}\bigg)$ we have
\begin{align*}
    \PP(\xi \mathbb{B}^{p}_2 \not \subset \operatorname{conv}(\tilde\bX_1, \ldots, \tilde\bX_m)) \leq \gamma.
\end{align*}
Hence by Theorem \ref{ball:in:convex:hull:follows:rate}
\begin{align*}
\|\hat \bbeta - \bbeta^*\| \leq \frac{a(L + 1)\bigg(\frac{16C}{\lambda^2_{\min}}\bigg(p \log\bigg(1 + 64\sqrt{2}\frac{C^{5/4}}{\lambda_{\min}^{5/2}}p^{1/2}\bigg) + \log \gamma^{-1}\bigg) + 1\bigg)}{\xi n},
\end{align*}
with probability at least $1 - \gamma - \exp\bigg(\frac{- L^2/2}{\frac{4}{3}L + 1}\bigg)$.

One can of course assume even less assumptions in which case the bounds will worsen a bit. For instance, instead of assuming $\sup_{\vb\in \mathbb{S}^{p-1}}\EE (\vb\T \bX)^4 \leq C < \infty$ one can simply assume that the coordinates $\bX^{(j)}$ for $j \in [p]$ have bounded $4$-th moments by some constant $C_0$. The same analysis as above can be applied in this situation upon noting that for any $\vb \in \mathbb{S}^{p-1}$:
\begin{align*}
\EE (\vb\T\bX)^4 & = \EE \sum_{i,j,m,l} v_i v_j v_m v_l \bX^{(i)} \bX^{(j)}\bX^{(m)}\bX^{(l)}  \\
& \leq \EE \sum_{i,j,m,l} v_i v_j v_m v_l \sqrt[4]{\EE \bX^{(i)4} \EE \bX^{(j)4} \EE\bX^{(m)4} \EE\bX^{(l)4}} \\
& \leq \|\vb\|_1^4 C_0 \leq p^2 C_0.
\end{align*}

Finally, if one is bothered by $4$-th moment assumptions, this too can be relaxed. One needs to use Corollary \ref{cor:paley:zygmund} with $\alpha = 2$ and $q = 1 + \frac{\epsilon}{2}$ (so that $q\alpha = 2 + \epsilon$) for some $\epsilon > 0$. In this way, it suffices to assume that $\sup_{\vb \in \mathbb{S}^{p-1}}\EE |\vb\T \bX|^{2 + \epsilon} < \infty$ which is even weaker than a 4-th moment assumption. 
\end{example}

\section{Proofs}

Let $P$ and $Q$ be two probability distributions defined on space $(\mathcal{F},\mu)$. The \textit{Total Variation Distance} between $P$ and $Q$ is given by
\begin{align*}
    \|P-Q\|_{\operatorname{TV}} = \sup_{A\in\mathcal{F}}|P(A)-Q(A)| = \frac{1}{2}\int |p(x)-q(x)|d\mu(x)
\end{align*}
The \textit{Hamming Distance} for two vectors $\ab$ and $\bb$ is defined as the number of coordinates where $\ab_i\neq\bb_i$.

The next result is called Gershgorin's Disk Theorem, which is used to bound the eigenvalues of a square matrix.
\begin{theorem}
\label{gershgorin}
Let $\Ab\in\mathbb{R}^{n\times n}$ be a complex matrix with entries $a_{ij}$, and let $\lambda$ be an eigenvalue of $\Ab$. Then at least for one $i\in[n]$ we have
\begin{align*}
    |\lambda - a_{ii}| \leq \sum_{j\neq i}|a_{ij}|
\end{align*}
\end{theorem}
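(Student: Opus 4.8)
The plan is to use the eigenvector associated with $\lambda$ and look at the coordinate of largest magnitude. Let $\xb \in \CC^n$ be an eigenvector with $\Ab \xb = \lambda \xb$, and let $i \in [n]$ be an index achieving $|x_i| = \max_{k \in [n]} |x_k|$. Since $\xb \neq \mathbf{0}$, we have $|x_i| > 0$.

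First I would write out the $i$-th coordinate of the identity $\Ab \xb = \lambda \xb$, namely $\sum_{j \in [n]} a_{ij} x_j = \lambda x_i$, and isolate the diagonal term to get $(\lambda - a_{ii}) x_i = \sum_{j \neq i} a_{ij} x_j$. Then I would take absolute values of both sides, apply the triangle inequality on the right, and bound each $|x_j| \leq |x_i|$ by the maximality of $|x_i|$:
\begin{align*}
|\lambda - a_{ii}|\,|x_i| \;=\; \bigg| \sum_{j \neq i} a_{ij} x_j \bigg| \;\leq\; \sum_{j \neq i} |a_{ij}|\,|x_j| \;\leq\; |x_i| \sum_{j \neq i} |a_{ij}|.
\end{align*}
Dividing through by $|x_i| > 0$ yields $|\lambda - a_{ii}| \leq \sum_{j \neq i} |a_{ij}|$, which is exactly the claim.

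There is essentially no obstacle here; the only point requiring a moment's care is ensuring $|x_i| \neq 0$ so the division is legitimate, which holds because an eigenvector is by definition nonzero and $i$ indexes its largest-modulus entry. The argument is identical over $\RR$ or $\CC$, so no additional hypotheses are needed.
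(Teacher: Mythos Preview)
Your argument is the standard, correct proof of Gershgorin's disk theorem. The paper states this theorem as a known auxiliary result and does not supply its own proof, so there is nothing to compare against; your proposal fills that gap cleanly.
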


We will also need the following Lemma.

\begin{proof}[Proof of Lemma \ref{lemma:critical_ineq}]
We can obtain two inequalities from the constraint $Y_i - a\leq \Xb_i\T \bbeta \leq Y_i + a$:
   \begin{align*}
       -(a-|\varepsilon_i|) &\leq \Xb_i\T\bbeta-\Xb_i\T\bbeta^* \leq a+|\varepsilon_i|, \quad \text{if } \varepsilon_i > 0\\
       -(a+|\varepsilon_i|) &\leq  \Xb_i\T\bbeta-\Xb_i\T\bbeta^* \leq a-|\varepsilon_i|, \quad \text{if } \varepsilon_i \leq 0.
   \end{align*}
   This implies the following two inequalities:
   \begin{align*}
       -\sign(\varepsilon_{i}) \Xb_{i}\T (\bbeta - \bbeta^*) &\leq a - |\varepsilon_{i}|\\
       \sign(\varepsilon_{i}) \Xb_{i}\T (\bbeta - \bbeta^*) & \leq a + |\varepsilon_{i}|
   \end{align*}
We finally only use the first one as a critical inequality since $a - |\varepsilon_{i}|$ is stricter and more interesting. Its right hand side is close to zero when $|\varepsilon_{i}|$ is close to $a$. Thus we get the critical inequality
\begin{align*}
    -\sign(\varepsilon_{i}) \Xb_{i}\T (\bbeta - \bbeta^*) &\leq a - |\varepsilon_{i}|.
\end{align*}
Note that the RHS of the critical inequality is actually independent of the LHS, and by the independence of $\varepsilon_i$ and $\bX_i$ we have that the $-\sign(\varepsilon_{i})$ and $|\varepsilon_i|$ are independent. Hence we can think of the critical inequality as
\begin{align*}
\eta_{i} \bX_{i}\T (\hat\bbeta - \bbeta^*) \leq a - |\varepsilon_{i}|,
\end{align*}
where $\eta_{i}$ is a Rademacher random variable. 
\end{proof}

\begin{lemma}\label{lemma:uniform:random:variables:concentration}
Let $\{\varepsilon_i\}_{i \in [n]}$ be i.i.d. $U([-a,a])$ random variables. Sort the errors $|\varepsilon_i| \sim U([0,a])$ in decreasing manner $|\bar \varepsilon_{(i)}|$, so that $a \geq |\bar \varepsilon_{(1)}| \geq\ldots \geq |\bar \varepsilon_{(n)}| \geq 0$. Suppose $K \leq n$ is a fixed positive integer. Then 
\begin{align}
    \PP\Big[ \frac{|\bar \varepsilon_{(K)}|}{a} < 1 - \frac{K( L + 1)}{n}\Big] \leq\exp\bigg(\frac{- KL^2/2}{\frac{4}{3}L + 1}\bigg) \leq \exp\bigg(\frac{- L^2/2}{\frac{4}{3}L + 1}\bigg).
\label{bernstein_K}
\end{align}
By summing up the first bound over $K$ one may establish that for $L$ large enough, with at least constant probability \eqref{bernstein_K} holds simultaneously for all $K$.
\end{lemma}

\begin{proof}[Proof of Lemma \ref{lemma:uniform:random:variables:concentration}]
Consider the inequality 
\begin{align*}
    a-|\bar \epsilon_{(K)}| \leq \theta \quad \Leftrightarrow \quad |\bar \epsilon_{(K)}| \geq a - \theta,
\end{align*}
for some $\theta$. Suppose now $\theta \leq a$. Denote the number of $|\bar \varepsilon_i|$ being in the interval $[a-\theta,a]$ with $Z$. If $Z > K$, then clearly $|\bar \varepsilon_{(K)}| \geq a - \theta$, hence the event $|\bar \varepsilon_{(K)}| < a - \theta$ is a subset of the event $Z \leq K$. Since $|\bar \varepsilon_i|\sim U([0,a])$, the probability for an individual $|\bar \varepsilon_i|$ falling into the interval $[a-\theta,a]$ is $\frac{\theta}{a}$. One can see $Z$ follows a binomial distribution $Bin(n, \frac{\theta}{a})$. By (a one-sided) Bernstein's inequality \citep[Theorem 2.8.4]{vershynin2018high} we have
\begin{align*}
    \mathbb{P}\bigg(Z\leq\frac{n\theta}{a}-t\bigg) \leq \exp\bigg(\frac{-t^2/2}{\frac{n\theta}{a}(1-\frac{\theta}{a})+\frac{t}{3}}\bigg) \leq  \exp\bigg(\frac{-t^2/2}{\frac{n\theta}{a}+\frac{t}{3}}\bigg).
\end{align*}
Observe that if $K(L + 1) > n$, there is nothing to prove (since the probability in \eqref{bernstein_K} is $0$). Hence assuming $K(L + 1) \leq n$, set $\theta = \frac{K(L + 1)a}{n}$ and $t = KL$. This yields $\frac{n\theta}{a} - t = K$, and
\begin{align*}
\PP(Z \leq K) \leq \exp\bigg(\frac{-t^2/2}{\frac{n\theta}{a}+\frac{t}{3}}\bigg) = \exp\bigg(\frac{- KL^2/2}{\frac{4}{3}L + 1}\bigg) \leq \exp\bigg(\frac{- L^2/2}{\frac{4}{3}L + 1}\bigg),
\end{align*}
which is what we wanted to show.

\end{proof}

%

\begin{lemma}[Extension to symmetric bounded distributions]\label{extension:nonuniform:noise} Let $\{\varepsilon_i\}_{i \in [n]}$ be i.i.d. symmetric about $0$ random variables, bounded on the interval $[-a,a]$ with continuous distribution and cdf equal to $F_{\varepsilon}$. Sort the errors $|\varepsilon_i|$ in decreasing manner $|\bar \varepsilon_{(i)}|$, so that $a \geq |\bar \varepsilon_{(1)}| \geq\ldots \geq |\bar \varepsilon_{(n)}| \geq 0$. Suppose $K \leq n$ is a fixed integer and $L > 0$ is also fixed. It then follows that:
\begin{align*}
    &\PP\Big[ |\bar \varepsilon_{(K)}| < a - a_n(K, L)\Big] \leq \exp\bigg(\frac{- L^2/2}{\frac{4}{3}L + 1}\bigg),
\end{align*}
where $a_n(K, L)$ is defined as
\begin{align*}
a_n(K, L) = \inf\{a_n \in [0,2a] : 2n(1 - F_{\varepsilon}(a - a_n)) > K(L + 1)\}.
\end{align*}
\end{lemma}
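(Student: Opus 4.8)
The plan is to mimic the proof of Lemma \ref{lemma:uniform:random:variables:concentration} essentially verbatim, replacing the uniform survival function $\theta/a$ with the general survival function $1 - F_{\varepsilon}$ of $|\varepsilon_i|$. First I would fix $\theta \in [0, 2a]$ and observe the equivalence $|\bar\varepsilon_{(K)}| < a - \theta$ implies there are at most $K-1$ indices $i$ with $|\varepsilon_i| \ge a - \theta$ (i.e. $|\varepsilon_i|$ lying in the interval $[a-\theta, a]$); so the event $\{|\bar\varepsilon_{(K)}| < a - \theta\}$ is contained in $\{Z \le K\}$ where $Z$ counts the number of $|\varepsilon_i|$ in $[a-\theta, a]$. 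Since the $\varepsilon_i$ are i.i.d.\ and symmetric about $0$ with continuous cdf $F_{\varepsilon}$, the probability that a single $|\varepsilon_i| \ge a - \theta$ is $p_\theta := \PP(|\varepsilon_i| \ge a - \theta) = 2(1 - F_{\varepsilon}(a-\theta))$ (using symmetry, and continuity so boundary points have zero mass), hence $Z \sim \mathrm{Bin}(n, p_\theta)$.

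Next I would apply the same one-sided Bernstein inequality \citep[Theorem 2.8.4]{vershynin2018high} used before: for $t > 0$,
\begin{align*}
\PP(Z \le n p_\theta - t) \le \exp\bigg(\frac{-t^2/2}{n p_\theta(1 - p_\theta) + t/3}\bigg) \le \exp\bigg(\frac{-t^2/2}{n p_\theta + t/3}\bigg).
\end{align*}
Now comes the one point where the argument needs slightly more care than in the uniform case: there we could solve $n\theta/a = K(L+1)$ exactly for $\theta$, but here $p_\theta$ is a general monotone function of $\theta$, so an exact inversion need not exist. This is precisely why the lemma is phrased via the infimum $a_n(K,L) = \inf\{a_n \in [0,2a] : 2n(1 - F_{\varepsilon}(a - a_n)) > K(L+1)\}$. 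I would take $\theta = a_n(K,L)$; by right-continuity/monotonicity considerations on $F_\varepsilon$, for this choice (or for any $\theta$ slightly larger, then taking a limit) we have $n p_\theta \ge K(L+1)$, so setting $t = KL$ gives $n p_\theta - t \ge K(L+1) - KL = K$, whence $\{Z \le K\} \subseteq \{Z \le n p_\theta - t\}$. Plugging $n p_\theta \ge K(L+1)$ and $t = KL$ into the Bernstein bound yields denominator $\le K(L+1) + KL/3 = K(\tfrac43 L + 1)$ and numerator $-K^2L^2/2$, giving the bound $\exp\big(\tfrac{-KL^2/2}{\tfrac43 L + 1}\big) \le \exp\big(\tfrac{-L^2/2}{\tfrac43 L + 1}\big)$ since $K \ge 1$. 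As in the uniform proof, I would also dispense with the trivial case: if the defining set for the infimum is empty (e.g.\ $K(L+1)$ too large relative to $n$), or more precisely if $a_n(K,L) \ge a$ (equivalently $a - a_n(K,L) \le 0$), then the event $\{|\bar\varepsilon_{(K)}| < a - a_n(K,L)\}$ is empty and there is nothing to prove.

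The main obstacle, such as it is, is the handling of the infimum and making sure the inequality $n p_{a_n(K,L)} \ge K(L+1)$ holds at the infimum rather than only in the limit from above — this requires a short continuity argument using that $F_\varepsilon$ is continuous (so $\theta \mapsto 1 - F_\varepsilon(a-\theta)$ is continuous), which makes the strict inequality in the definition pass to a non-strict inequality at the infimum; equivalently one can prove the bound for every $\theta$ with $2n(1-F_\varepsilon(a-\theta)) > K(L+1)$ and then let $\theta \downarrow a_n(K,L)$, using that the event probabilities are monotone in $\theta$. Everything else is a direct transcription of the earlier proof.
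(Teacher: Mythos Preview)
Your approach is correct in spirit, but there is one slip: from $n p_\theta \ge K(L+1)$ you conclude that the Bernstein denominator $n p_\theta + t/3$ is \emph{at most} $K(L+1) + KL/3$, which is the wrong direction (a larger denominator makes the exponential bound weaker, not stronger). The fix is immediate and already latent in your discussion: since $F_\varepsilon$ is continuous, $\theta \mapsto n p_\theta = 2n(1 - F_\varepsilon(a-\theta))$ is continuous with $n p_0 = 2n(1 - F_\varepsilon(a)) = 0$, so by the intermediate value theorem the infimum $a_n(K,L)$ (when the defining set is nonempty) satisfies $n p_{a_n(K,L)} = K(L+1)$ \emph{exactly}; with equality the rest of your argument goes through verbatim. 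Your limit-from-above argument also works provided you keep the $\theta$-dependence $n p_\theta$ in the Bernstein bound and let $n p_\theta \downarrow K(L+1)$ rather than freezing the denominator prematurely.

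The paper's proof takes a different and shorter route: instead of redoing the Bernstein computation, it applies the probability integral transform. Since $|\varepsilon|$ has continuous cdf $G(y) = 2F_\varepsilon(y) - 1$, the variables $G(|\varepsilon_i|)$ are i.i.d.\ $U([0,1])$, and $u_{(K)} := G(|\bar\varepsilon_{(K)}|)$ is the $K$-th largest of these. Lemma~\ref{lemma:uniform:random:variables:concentration} (with $a=1$) then yields $\PP\big(u_{(K)} < 1 - K(L+1)/n\big) \le \exp\big(-\tfrac{L^2/2}{\tfrac{4}{3}L+1}\big)$ directly, i.e., $\PP\big(F_\varepsilon(|\bar\varepsilon_{(K)}|) < 1 - K(L+1)/(2n)\big)$ is bounded by the same quantity. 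The only remaining step is the observation that $F_\varepsilon(|\bar\varepsilon_{(K)}|) \ge 1 - K(L+1)/(2n)$ forces $|\bar\varepsilon_{(K)}| \ge a - a_n(K,L)$, which follows from the definition of $a_n(K,L)$ and monotonicity of $F_\varepsilon$. Your direct argument is fine once patched, but it re-derives the concentration; the paper's reduction reuses Lemma~\ref{lemma:uniform:random:variables:concentration} as a black box and avoids touching Bernstein again.
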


\begin{proof}
Clearly, the cdf of $|\varepsilon|$ is $2F_{\varepsilon}(y) - 1$. We know that $ u_{(K)}= 2F_{\varepsilon}(|\bar \varepsilon_{(K)}|) - 1$, where $u_{(K)} \sim Beta(n - K + 1, K)$. By Lemma \ref{lemma:uniform:random:variables:concentration} we have
\begin{align*}
\PP\bigg(u_{(K)} < 1 - \frac{K(L + 1)}{n}\bigg) \leq \exp\bigg(\frac{- L^2/2}{\frac{4}{3}L + 1}\bigg).
\end{align*}
Hence
\begin{align*}
\PP\bigg(F_{\varepsilon}(|\bar \varepsilon_{(K)}|) < 1 - \frac{K(L + 1)}{2n}\bigg) \leq \exp\bigg(\frac{- L^2/2}{\frac{4}{3}L + 1}\bigg).
\end{align*}
Since by the definition of $a_n(K)$ if $F_{\varepsilon}(|\bar \varepsilon_{(K)}|) \geq 1 - \frac{K(L + 1)}{2n}$ it follows that $|\bar \varepsilon_{(K)}| \geq a - a_n(K, L)$, the conclusion follows.
\end{proof}

\begin{proof}[Proof of Lemma \ref{density:lemma}] We are concerned with the object

\begin{align*}
\PP(|\vb\T \bX| \leq 2\xi) = \PP((\vb\T \bX)^2 \leq (2\xi)^2). 
\end{align*}
We have:
\begin{align*}
 \PP((\vb\T \bX)^2 \leq (2\xi)^2) = \PP(\exp(-\lambda(\vb\T \bX)^2) \geq \exp(-\lambda (2\xi)^2)) \leq \exp(\lambda (2\xi)^2)\EE \exp(-\lambda(\vb\T \bX)^2),
\end{align*}
for $\lambda > 0$. By H\"{o}lder's inequality we have
\begin{align*}
\EE \exp(-\lambda(\vb\T \bX)^2) = \int \exp(-\lambda t^2) f_{\vb}(t) dt & \leq \bigg[\int \exp\bigg(-\lambda \frac{q}{q-1} t^2\bigg) dt\bigg]^{\frac{q-1}{q}}  \bigg[\int f^q_{\vb}(t) dt \bigg]^{\frac{1}{q}} \\
& \leq \bigg(\sqrt{\frac{\pi (q-1)}{\lambda q}}\bigg)^{\frac{q-1}{q}}C
\end{align*}
Pick $\lambda = \frac{\pi(q-1)(C\epsilon^{-1})^{\frac{2q}{q-1}}}{q}$ so that the above bound becomes:
\begin{align*}
\EE \exp(-\lambda(\vb\T \bX)^2) \leq \epsilon
\end{align*}
Now select $\xi = \frac{\lambda^{-1/2}}{2}$. With this choice we obtain
\begin{align*}
\PP(|\vb\T \bX| \leq 2\xi)  =  \PP((\vb\T \bX)^2 \leq (2\xi)^2) \leq \epsilon e < c_0,
\end{align*}
for any $\epsilon < \frac{c_0}{e}$. This completes the proof since $\vb \in \mathbb{S}^{p-1}$ was arbitrary. 
\end{proof}

\begin{proof}[Proof of Proposition \ref{theorem:alternative_bound}]
Since $\bbeta^*$ is a feasible point of \eqref{main:optimization:problem}, and $\hat\bbeta$ minimizes the least squares among all feasible points, we have $\|\bY-\Xb\hat\bbeta\|^2\leq\|\bY-\Xb\bbeta^*\|^2=\|\bvarepsilon\|^2$, and consequently the following basic inequality:
\begin{align*}
(\hat\bbeta - \bbeta^*)\T n^{-1}\sum_{i \in [n]} \bX_i \bX_i\T (\hat\bbeta - \bbeta^*) \leq 2 n^{-1}\sum_{i \in [n]} \varepsilon_i \bX_i\T (\hat\bbeta - \bbeta^*).
\end{align*}
The above inequality can be rewritten as
\begin{align}
    &\inf_{\vb\in\mathbb{S}^{p-1}}n^{-1}\sum_{i\in[n]}\vb\T\bX_i\bX_i\T\vb\,\|\hat\bbeta - \bbeta^*\|^2 \leq \bigg\|2 n^{-1}\sum_{i \in [n]} \varepsilon_i \bX_i \bigg\| \|\hat\bbeta - \bbeta^*\|\nonumber,\\
    \Rightarrow \quad & \|\hat\bbeta - \bbeta^*\| \leq \frac{\big\|2 n^{-1}\sum_{i \in [n]} \varepsilon_i \bX_i \big\|}{\inf_{\vb\in\mathbb{S}^{p-1}}n^{-1}\sum_{i\in[n]}\vb\T\bX_i\bX_i\T\vb}.
\label{middle_alternative}
\end{align}
It remains to upper bound the term $\big\|2 n^{-1}\sum_{i \in [n]} \varepsilon_i \bX_i \big\|$ and lower bound the term $$\displaystyle\inf_{\vb\in\mathbb{S}^{p-1}}n^{-1}\sum_{i\in[n]}\vb\T\bX_i\bX_i\T\vb.$$ We first consider
\begin{align*}
S^2 := \EE \big\|n^{-1}\sum_{i \in [n]} \varepsilon_i \bX_i \big\|^2 = \EE \sum_{j \in [p]} \bigg(\frac{\sum_{i \in [n]} \sign(\varepsilon_i)|\varepsilon_i| \bX^{(j)}_i}{n}\bigg)^2.
\end{align*}
Observe that $\sign(\varepsilon_i)$ is a Rademacher random variable independent of $|\varepsilon_i| \bX^{(j)}_i$. Thus we may apply Khintchine's inequality (conditionally) to argue that
\begin{align*}
\EE \sum_{j \in [p]} \bigg(\frac{\sum_{i \in [n]} \sign(\varepsilon_i)|\varepsilon_i| \bX^{(j)}_i}{n}\bigg)^2 \leq n^{-2} K^2_2 \sum_{j \in [p]} \sum_{i \in [n]} \EE \varepsilon_i^2 \bX_i^{2(j)} \lesssim \sqrt{C_0} a^2\frac{p}{n}.
\end{align*}
where we used that $\EE \bX_i^{2(j)} \leq \sqrt{C_0}$ and that $\EE \varepsilon_i^2 = a^2/3$, and where $K_2$ is an absolute constant from Khintchine's inequality. Next, we will evaluate the variance of this term. We have

\begin{align*}
\MoveEqLeft\operatorname{Var}\sum_{j \in [p]} \bigg(\frac{\sum_{i \in [n]} \sign(\varepsilon_i)|\varepsilon_i| \bX^{(j)}_i}{n}\bigg)^2 \leq \EE \bigg(\sum_{j \in [p]} \bigg(\frac{\sum_{i \in [n]} \sign(\varepsilon_i)|\varepsilon_i| \bX^{(j)}_i}{n}\bigg)^2\bigg)^2 \\
& = \EE \sum_{j,j'} \bigg(\frac{\sum_{i \in [n]} \sign(\varepsilon_i)|\varepsilon_i| \bX^{(j)}_i}{n}\bigg)^2 \bigg(\frac{\sum_{i \in [n]} \sign(\varepsilon_i)|\varepsilon_i| \bX^{(j')}_i}{n}\bigg)^2\\
& \leq 2^{-1}\EE \sum_{j,j'} \bigg(\frac{\sum_{i \in [n]} \sign(\varepsilon_i)|\varepsilon_i| \bX^{(j)}_i}{n}\bigg)^4 + \bigg(\frac{\sum_{i \in [n]} \sign(\varepsilon_i)|\varepsilon_i| \bX^{(j')}_i}{n}\bigg)^4\\
& = p \EE \sum_{j} \bigg(\frac{\sum_{i \in [n]} \sign(\varepsilon_i)|\varepsilon_i| \bX^{(j)}_i}{n}\bigg)^4
\end{align*}
Now we may apply Khintchine's inequality once again. We have
\begin{align*}
\MoveEqLeft p\EE \sum_{j} \bigg(\frac{\sum_{i \in [n]} \sign(\varepsilon_i)|\varepsilon_i| \bX^{(j)}_i}{n}\bigg)^4 \leq p n^{-4} K_4^4 \sum_{j \in [p]} \EE \bigg(\sum_{i \in [n]} (\varepsilon_i)^2 \bX^{2(j)}_i\bigg)^2\\
&=  p n^{-4} K_4^4 \sum_{j \in [p]} \EE \sum_{i,i' \in [n]} (\varepsilon_i)^2 \bX^{2(j)}_i (\varepsilon_{i'})^2 \bX^{2(j)}_{i'}\\
& \leq p n^{-3} K_4^4 \sum_{j \in [p]} \EE \sum_{i \in [n]} (\varepsilon_i)^4 \bX^{4(j)}_i\\
& \lesssim C_0 p^2 n^{-2} a^4.
\end{align*}

Hence by Chebyshev inequality 
\begin{align*}
\PP\bigg(S^2  \geq \frac{c \sqrt{C_0}p a^2}{n} + t\bigg) \leq \PP(S^2  \geq \EE S^2 + t) \leq \PP(|S^2 - \EE S^2| \geq t) \leq \frac{\Var S^2}{t^2} \lesssim \frac{C_0 a^4 p^2}{n^2 t^2}.
\end{align*}
Thus one can set $t = C\frac{\sqrt{C_0} a^2 p}{n}$ for some large constant $C$. Hence with probability $1 - 1/C^2$ we will have $S^2 \lesssim \frac{\sqrt{C_0}p a^2}{n}$. This completes the bound of the first term.
 

To lower bound the second term, we rewrite it as
\begin{align*}
\inf_{\vb\in\mathbb{S}^{p-1}}n^{-1}\sum_{i\in[n]}\vb\T\bX_i\bX_i\T\vb = \inf_{\vb\in\mathbb{S}^{p-1}} \frac{1}{n}\vb\T \bSigma^{1/2} \sum_{i\in[n]} \bSigma^{-1/2} \bX_{i}\bX_{i}\T\bSigma^{-1/2}\bSigma^{1/2}\vb
\end{align*}
Let $\Ab =\frac{1}{n}\sum_{i\in[n]}\bSigma^{-1/2}\bX_{i}\bX_{i}\T\bSigma^{-1/2}$. Note that
\begin{align*}
    \mathbb{E}\Ab = \Ib_p
\end{align*}
Let $m\in[p]$, $l\in[p]$, and $c>0$ be constants. By Chebyshev's inequality, for the $(m, l)$-th entry of $\Ab$ and $\Ib_p$ we have
\begin{align}
\label{chebyshev_indiv_alternative}
    \mathbb{P}\Big(\,\big|\Ab^{(m,l)}-\Ib_p^{(m,l)}\big| > cp\sqrt{\var(\Ab^{(m,l)})}\,\Big) \leq \frac{1}{c^2p^2}
\end{align}

Apply union bound to \eqref{chebyshev_indiv_alternative}, given the bound on $\var(\Ab^{(m,l)})$, with probability at least $1- c^{-2}$
\begin{align*}
    \|\Ab - \Ib_p\|_{\max} \leq cp\sqrt{\var(\Ab^{(m,l)})}
\end{align*}
It follows that the $\infty$ norm of the matrix is bounded as
\begin{align*}
\|\Ab - \Ib_p\|_{\infty} \leq cp^2\sqrt{\var(\Ab^{(m,l)})}
\end{align*}
From here one can show that
\begin{align*}
\Ab^{(m,m)} - \sum_{l \neq m} |\Ab^{(m,l)}| \geq 1 - cp^2\sqrt{\var(\Ab^{(m,l)})}
\end{align*}
From Gershgorin's Disk Theorem it follows that the eigenvalues of $\Ab$ are lower bounded as $1 - cp^2\sqrt{\var(\Ab^{(m,l)})}$. Let $\lambda_{\min}(\Ab)$ be the smallest eigenvalue of $\Ab$, then
\begin{align*}
\inf_{\vb\in\mathbb{S}^{p-1}} \frac{1}{n}\vb\T \bSigma^{1/2} \sum_{i\in[n]} \bSigma^{-1/2} \bX_{i}\bX_{i}\T\bSigma^{-1/2}\bSigma^{1/2}\vb &\geq \lambda_{\min}(\Ab) \|\bSigma^{1/2}\vb\|^2\\
& \geq (1 - cp^2\sqrt{\var(\Ab^{(m,l)})})\lambda_{\min}(\bSigma)
\end{align*}

The quantity $\var(\Ab^{(m,l)})$ can be upper bounded as $\var(\Ab^{(m,l)})\lesssim\frac{1}{n} \|\bSigma^{-1}\|_{\operatorname{op}}^2\,p^2C_0$, which we prove in Lemma \ref{lemma:var_4thmmt} below. If $c\lesssim \frac{\sqrt{n}}{2\sqrt{C_0}p^3\|\bSigma^{-1}\|_{\operatorname{op}}}$, we have $(1 - cp^2\sqrt{\var(\Ab^{(m,l)})})\geq\frac{1}{2}$, so that combine with \eqref{middle_alternative} to get
\begin{align*}
    \|\bbeta - \bbeta^*\| \lesssim \frac{\sqrt{p/n}}{\frac{1}{2}\lambda_{\min}(\bSigma)}\asymp \sqrt{\frac{p}{n}}\|\bSigma^{-1}\|_{\operatorname{op}},
\end{align*}
with probability converging to $1$ if $c\rightarrow+\infty$ as $n\rightarrow+\infty$.

 To complete the second part we will use Corollary 3.1 of \cite{yaskov2014lower} (see also \cite[Theorem 1.5]{srivastava2013covariance}) which states that assuming $\EE |\vb\T \bX|^{2 + \alpha} < \infty$ there exists a constant $C_\alpha$ such that 
\begin{align*}
\lambda_{\min}(\Ab) \geq 1 - C_\alpha \bigg(\frac{p}{n}\bigg)^{2/(2 + \alpha)},
\end{align*}
with probability at least $1 - e^{-p}$. Hence when $n > C_\alpha' p$ the above can be made bigger than $\frac{1}{2}$, in which case the proof may continue in the same fashion as before. This completes the proof.
\end{proof}

\begin{lemma}
\label{lemma:var_4thmmt}
For $\Ab =\frac{1}{n}\sum_{i\in[n]}\bSigma^{-1/2}\bX_{i}\bX_{i}\T\bSigma^{-1/2}$, if the 4-th moment of each coordinate in $\bX_i$ is bounded by $C_0$, then
\begin{align*}
    \var(\Ab^{(m,l)})\leq \frac{1}{n} \|\bSigma^{-1}\|_{\operatorname{op}}^2p^2C_0.
\end{align*}
\end{lemma}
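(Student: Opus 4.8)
The plan is to expand the variance of a single entry $\Ab^{(m,l)}$ directly in terms of the entries of $\bSigma^{-1/2}$ and the moments of $\bX$. First I would write $\bZ_i := \bSigma^{-1/2} \bX_i$, so that $\Ab = \frac 1n \sum_{i \in [n]} \bZ_i \bZ_i\T$ and $\EE \bZ_i \bZ_i\T = \Ib_p$. Since the summands are i.i.d., $\var(\Ab^{(m,l)}) = \frac 1n \var\big( \bZ^{(m)} \bZ^{(l)}\big) \leq \frac 1n \EE\big[ (\bZ^{(m)})^2 (\bZ^{(l)})^2 \big]$. By Cauchy--Schwarz this is at most $\frac 1n \sqrt{\EE (\bZ^{(m)})^4 \, \EE (\bZ^{(l)})^4}$, so the problem reduces to bounding the fourth moment of a single coordinate of $\bZ$.

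Next I would unpack $\bZ^{(m)} = \sum_{j \in [p]} (\bSigma^{-1/2})_{mj} \bX^{(j)}$ and expand the fourth power: $\EE (\bZ^{(m)})^4 = \sum_{j,k,r,s} (\bSigma^{-1/2})_{mj}(\bSigma^{-1/2})_{mk}(\bSigma^{-1/2})_{mr}(\bSigma^{-1/2})_{ms} \, \EE\big[\bX^{(j)}\bX^{(k)}\bX^{(r)}\bX^{(s)}\big]$. Each mixed fourth moment is bounded in absolute value by $C_0$ via the same AM--GM / Hölder argument already used in Example \ref{most:important:example} (namely $\EE[\bX^{(j)}\bX^{(k)}\bX^{(r)}\bX^{(s)}] \le \sqrt[4]{\EE\bX^{(j)4}\EE\bX^{(k)4}\EE\bX^{(r)4}\EE\bX^{(s)4}} \le C_0$). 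Hence $\EE (\bZ^{(m)})^4 \le C_0 \big(\sum_{j} |(\bSigma^{-1/2})_{mj}|\big)^4 = C_0 \, \|(\bSigma^{-1/2})_{m\cdot}\|_1^4$, and by Cauchy--Schwarz on the $p$-vector, $\|(\bSigma^{-1/2})_{m\cdot}\|_1 \le \sqrt p \, \|(\bSigma^{-1/2})_{m\cdot}\|_2 \le \sqrt p \, \|\bSigma^{-1/2}\|_{\operatorname{op}} = \sqrt p\,\|\bSigma^{-1}\|_{\operatorname{op}}^{1/2}$, where the last step uses that the $\ell_2$ norm of a row is at most the operator norm of the (symmetric) matrix. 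Putting the pieces together gives $\EE(\bZ^{(m)})^4 \le C_0 p^2 \|\bSigma^{-1}\|_{\operatorname{op}}^2$, and likewise for coordinate $l$, so $\var(\Ab^{(m,l)}) \le \frac 1n \sqrt{(C_0 p^2 \|\bSigma^{-1}\|_{\operatorname{op}}^2)^2} = \frac 1n C_0 p^2 \|\bSigma^{-1}\|_{\operatorname{op}}^2$, which is the claimed bound.

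There is no serious obstacle here; the only point requiring a little care is the passage from the mixed fourth moments of $\bX$ to the single-coordinate bound — one must be sure to take absolute values inside the sum before applying the moment bound, and to track that $\|\bSigma^{-1/2}\|_{\operatorname{op}}^2 = \|\bSigma^{-1}\|_{\operatorname{op}}$. Everything else is a routine expansion and two applications of Cauchy--Schwarz.
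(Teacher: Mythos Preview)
Your proof is correct and follows essentially the same route as the paper: reduce to a single summand via the i.i.d.\ structure, bound the variance by the raw second moment $\EE[(\bZ^{(m)})^2(\bZ^{(l)})^2]$, and then control this using Cauchy--Schwarz and the coordinate fourth-moment bound. The only difference is organizational: the paper applies Cauchy--Schwarz directly in the form $|\eb_m\T\bSigma^{-1/2}\bX_i|\le\|\eb_m\T\bSigma^{-1/2}\|_2\,\|\bX_i\|_2$ to obtain $\|\bSigma^{-1}\|_{\operatorname{op}}^2\,\EE\|\bX_i\|_2^4$ and then bounds $\EE\|\bX_i\|_2^4\le p^2C_0$, thereby bypassing your coordinate expansion and the $\ell_1$--$\ell_2$ conversion; the resulting bound is identical.
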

\begin{proof}
Since $\bX_{i}$ are i.i.d., we have
\begin{align*}
    \var(\Ab^{(m,l)}) & = 
 \frac{1}{n}\var[\eb_m\T\bSigma^{'1/2} \bX_{i}\bX_{i}\T\bSigma^{-1/2}\eb_l]
\end{align*}
Therefore
\begin{align*}
   \MoveEqLeft \var[\eb_m\T\bSigma^{'1/2} \bX_{i}\bX_{i}\T\bSigma^{-1/2}\eb_l]\leq \mathbb{E}[(\eb_m\T\bSigma^{'1/2} \bX_{i})^2s(\bX_{i}\T\bSigma^{-1/2}\eb_l)^2] \\
    & \leq \|\eb_m\T\bSigma^{-\frac{1}{2}}\|^2_2\|\eb_l\T\bSigma^{-\frac{1}{2}}\|^2_2 \mathbb{E}[\|\bX_{i}\|^4] \leq \|\bSigma^{'-1}\|_{\operatorname{op}}^2 \mathbb{E}[\|\bX_{i}\|^4],
\end{align*}
It is now clear that $\mathbb{E}[\|\bX_{i}\|^4] = \sum_{k,l \in [p]} \EE \tilde\bX_{i}^{(k)2}\tilde\bX_{i}^{(l)2} \leq \sum_{k,l \in [p]} [\EE \tilde\bX_{i}^{(k)4}]^{\frac{1}{2}} [\EE \tilde\bX_{i}^{(l)4}]^{\frac{1}{2}} \leq p^2 C_0$. This completes the proof.
\end{proof}

\begin{proof}[Proof of Theorem \ref{theorem:lower_bound}]
The minimax risk is defined as
\begin{align*}
\inf_{\hat \bbeta} \sup_{\bbeta^* \in \RR^p} \EE_{\bbeta^*} \|\hat \bbeta - \bbeta^*\|^2.
\end{align*}

Let $\Rb$ be any $p \times p$ orthogonal matrix. Pick some $\delta>0$ and define $\bbeta_{\bnu}=\delta\Rb\bnu$ where $\bnu=\{+1,-1\}^p$. Define a probability distribution corresponding to $\bnu$ as $\mathcal{P}_{\bnu}^{\otimes n}=\bX_i\T\bbeta_{\bnu}+U([-a,a])$. This is the distribution of $Y_i$ for a linear model $Y_i = \bX_i\T \bbeta_{\bnu} + U_i$ where $U_i \sim U([-a,a])$. 
Notice that we have a $\delta$-Hamming separation for the loss function $\|\cdot\|^2$
\begin{align*}
\|\hat \bbeta - \bbeta_{\bnu}\|^2 = \|\Rb \Rb\T\hat \bbeta - \Rb\delta\bnu\|^2 = \|\Rb\T\hat \bbeta - \delta\bnu\|^2  \geq \delta^2 \sum_{j \in [p]} \mathbbm{1}(\sign((\Rb\T\hat \beta)_j) \neq \nu_j).
\end{align*}
We now need to repeat the proof of Assouad's lemma (in order to capture the expectation with respect to the covariates $\bX$). We have

\begin{align*}
    \sup_{\bbeta^*} \EE_{\Xb} \EE_{\bY, \bbeta^* | \Xb} \|\hat \bbeta - \bbeta^*\|^2 & \geq \frac{1}{2^p} \sum_{\bnu} \delta^2 \sum_{j \in [p]} \EE_{\Xb} \EE_{\bY, \bbeta_{\bnu} | \Xb} \mathbbm{1}(\sign((\Rb\T\hat \bbeta)_j) \neq \nu_j)\\
    & \geq \delta^2 \EE_{\Xb} \sum_{j \in [p]} \frac{1}{2} \bigg(\frac{1}{2^{p-1}} \sum_{\bnu: \nu_j = 1} \EE_{\bY, \bbeta_{\bnu} | \Xb} \mathbbm{1}(\sign((\Rb\T\hat \bbeta)_j) \neq 1) \\
    & ~~~~ + \frac{1}{2^{p-1}} \sum_{\bnu: \nu_j = -1} \EE_{\bY, \bbeta_{\bnu} | \Xb} \mathbbm{1}(\sign((\Rb\T\hat \bbeta)_j) \neq -1)\bigg)\\
    & \geq \delta^2 \EE_{\Xb} \sum_{j \in [p]}\frac{1}{2}\bigg(1 - \bigg\|\frac{1}{2^{p-1}} \sum_{\bnu: \nu_j = 1} \PP^{\otimes n}_{\bY, \bbeta_{\bnu} | \Xb} - \frac{1}{2^{p-1}} \sum_{\bnu: \nu_j = -1} \PP^{\otimes n}_{\bY, \bbeta_{\bnu}|\Xb} \bigg\|_{\operatorname{\operatorname{TV}}}\bigg).
\end{align*}

Taking $\inf$ over all estimators on the LHS concludes that:
\begin{align}
\label{assouad_expression}
\inf_{\hat \bbeta} \sup_{\bbeta^* \in \RR^p} \EE_{\bbeta^*} \|\hat \bbeta - \bbeta^*\|^2 \geq \frac{\delta^2}{2} \sum_{j = 1}^p [1 - \EE_{\Xb} \|P^{\otimes
 n}_{+j} - P^{\otimes n}_{-j}\|_{\TV}],
\end{align}
Notice that 
$$P^{\otimes n}_{+ j}=2^{-p}\sum_{\bnu\in\{+1,-1\}^p}\mathcal{P}^{\otimes n}_{\bnu:\nu_j=+1},
\quad P^{\otimes n}_{-j}=2^{-p}\sum_{\bnu\in\{+1,-1\}^p}\mathcal{P}^{\otimes n}_{\bnu:\nu_j=-1},$$ 
where by $\mathcal{P}^{\otimes n}_{\bnu:\nu_j=+1}$ we mean the probability under $\bnu$ where we set $\nu_j = +1$ regardless  of the value of $\nu_j$, and similarly for $\mathcal{P}^{\otimes n}_{\bnu:\nu_j=-1}$.
The total variation can be bounded as
\begin{align*}
\EE_{\Xb}\|P^{\otimes n}_{+j} - P^{\otimes n}_{-j}\|_{\TV} & \leq 2^{-p}\sum_{\bnu\in\{+1,-1\}^p}\EE_{\Xb}\|P^{\otimes n}_{\bnu:\nu_j=1} - P^{\otimes n}_{\bnu':\nu'_j=-1}\|_{\operatorname{TV}} \\
& \leq
\max_{j\in[p]} \EE_{\Xb}\|P^{\otimes n}_{\bnu:\nu_j=1} - P^{\otimes n}_{\bnu':\nu'_j=-1}\|_{\operatorname{TV}}\\
& \leq \max_{\bnu, \bnu': d_{H}(\bnu, \bnu') = 1} \EE_{\Xb}\|P^{\otimes n}_{\bnu} - P^{\otimes n}_{\bnu'}\|_{\TV},
\end{align*}
where $d_H$ is the Hamming distance. Now it is easy to see that if one has two uniforms $c_1 + U([-a,a])$ and $c_2 + U([-a,a])$ (call those $P_{c_1}$, $P_{c_2}$) we have
\begin{align*}
\|P_{c_1} - P_{c_2}\|_{\TV} = \frac{|c_1 - c_2|}{2a} \wedge 1.
\end{align*}
Thus for any fixed $\bnu$ and $\bnu'$ with $d_{H}(\bnu, \bnu') = 1$ we have that 
\begin{align*}
\EE_{\Xb}\|P^{\otimes n}_{\bnu} - P^{\otimes n}_{\bnu'}\|_{\TV} \leq \sum_{i \in [n]} \EE_{\Xb}\frac{|\bX_i\T(\bbeta_{\bnu} - \bbeta_{\bnu'})|}{2a} = \sum_{i \in [n]} \EE_{\Xb}\frac{|(\bX_i\T \Rb)_{j}| \delta}{a},
\end{align*}
where $j$ is the coordinate where $\nu_j \neq \nu_j'$. Hence
\begin{align}\label{equation:which:is:different}
\max_{\bnu, \bnu': d_{H}(\bnu, \bnu') \leq 1} \EE_{\Xb}\|P^{\otimes n}_{\bnu} - P^{\otimes n}_{\bnu'}\|_{\TV} \leq \max_{j\in[p]} \EE_{\Xb} \sum_{i \in [n]} \frac{|(\bX_i\T \Rb)_{j}| \delta}{a}.
\end{align}
Picking $\delta = \frac{a}{2 \inf_{\Rb \in \cO} \max_{j}\EE_{\Xb} \sum_{i\in [n]} |(\bX_i\T \Rb)_{j}|}$, by \eqref{assouad_expression} we have 
\begin{align*}
\inf_{\hat \bbeta} \sup_{\bbeta^* \in \RR^p} \EE_{\bbeta^*} \|\hat \bbeta - \bbeta^*\|^2 \geq p \delta^2/4,
\end{align*}
which completes the first part of the proof. 

For the next part suppose $\tilde \Rb$ achieves the $\min$ in the definition of $\delta$ (if the $\min$ cannot be achieved the same argument will go through by taking a sequence that converges to the $\inf$). We let $S = \{\delta \tilde \Rb\bnu: \bnu \in \{\pm1\}^p\}$ where $\delta$ is as above. We have

\begin{align*}
    \inf_{\hat \bbeta} \sup_{\bbeta^*} \PP(\|\hat \bbeta - \bbeta^*\| \geq r_n) \geq \inf_{\hat \bbeta} \sup_{\bbeta^* \in S} \PP(\|\hat \bbeta - \bbeta^*\| \geq r_n),
\end{align*}
where $r_n := \delta \sqrt{p}/(2\sqrt{2})$. Now observe that in the RHS above, instead of taking $\inf$ over all $\hat \bbeta$ it suffices to take $\inf$ over $\hat \bbeta$ which belong to the set $S' = \{\bgamma: \exists \bbeta \in S, \|\bgamma - \bbeta\| \leq \operatorname{diam}_{L_2}(S) \}$. This is so since if $\hat \bbeta$ achieves the $\inf$ we can always consider $\tilde \bbeta = \hat \bbeta$ if $\hat \bbeta \in S'$ and a random vector in $S$ otherwise. Clearly, by this definition $\|\tilde \bbeta - \bbeta^*\| \leq \|\hat \bbeta - \bbeta^*\|$ and hence
\begin{align*}
    \PP(\|\hat \bbeta - \bbeta^*\| \geq r_n) \geq \PP(\|\tilde \bbeta - \bbeta^*\| \geq r_n).
\end{align*}
Thus we have established
\begin{align*}
    \inf_{\hat \bbeta} \sup_{\bbeta^*} \PP(\|\hat \bbeta - \bbeta^*\| \geq r_n) \geq \inf_{\hat \bbeta \in S'} \sup_{\bbeta^* \in S} \PP(\|\hat \bbeta - \bbeta^*\| \geq r_n),
\end{align*}
where $\hat \bbeta \in S'$ means measurable functions which output values in the set $S'$. From Assouad's lemma above we know that for any $\hat \bbeta$, $\sup_{\bbeta^* \in S} \EE \|\hat \bbeta - \bbeta^*\|^2_2 \geq 2r_n^2$. Thus for any $\hat \bbeta$, there exists a $\bbeta' \in S$ such that 
\begin{align*}
    \sup_{\bbeta^* \in S} \PP(\|\hat \bbeta - \bbeta^*\|^2_2 \geq r^2_n) \geq \PP(\|\hat \bbeta - \bbeta'\|^2 \geq r^2_n) \geq \PP(\|\hat \bbeta - \bbeta'\|^2_2 \geq 1/2\EE \|\hat \bbeta - \bbeta'\|^2) \geq 1/4 \frac{(\EE \|\hat \bbeta - \bbeta'\|^2_2)^2}{\EE \|\hat \bbeta - \bbeta'\|^4_2}\\
    \geq \frac{r_n^4}{\sup_{\bbeta^* \in S} \EE \|\hat \bbeta - \bbeta^*\|^4_2}
\end{align*}
provided that $\EE \|\hat \bbeta - \bbeta^*\|^4_2$ exists, where the above follows by Paley-Zygmund's inequality. We now need to note that 
\begin{align*}
    \sup_{\hat \bbeta \in S'} \sup_{\bbeta^* \in S} \EE \|\hat \bbeta - \bbeta^*\|^4 \leq 4 \operatorname{diam}_{L_2}(S)^4 = 4 (\delta^2 p)^2.
\end{align*}
Observe that this is precisely of the same order as $r^4_n$ hence it shows that the probability $$\inf_{\hat \bbeta \in S'} \sup_{\bbeta^* \in S} \PP(\|\hat \bbeta - \bbeta^*\| \geq r_n),$$ is lower bounded by a constant ($1/2^8$).

\end{proof}




\begin{proof}[Proof of Proposition \ref{modded:lower:bound:theorem}] 
We will only indicate where the proof differs from the proof of Theorem \ref{theorem:lower_bound}. We set the matrix $\Rb$ to the any orthonormal matrix such that one of its rows contains the vector $\vb$ which minimizes $\inf_{\vb \in \mathbf{S}^{p-1}}\EE |\bX\T \vb|$. We then follow the proof of Theorem \ref{theorem:lower_bound} until equation \eqref{equation:which:is:different}:
\begin{align*}
\max_{\bnu, \bnu': d_{H}(\bnu, \bnu') \leq 1} \EE_{\Xb}\|P^{\otimes n}_{\bnu} - P^{\otimes n}_{\bnu'}\|_{\TV} \leq \max_{j\in[p]} \EE_{\Xb} \sum_{i \in [n]} \frac{|(\bX_i\T \Rb)_{j}| \delta}{a}.
\end{align*}
For the index $j$ corresponding to the vector $\vb$, we have 
\begin{align*}
 \EE_{\Xb} \sum_{i \in [n]} \frac{|(\bX_i\T \Rb)_{j}| \delta}{a} = \EE_{\Xb} \sum_{i \in [n]} \frac{|\bX_i\T \vb| \delta}{a} 
\end{align*}
Hence if one selects $\delta = \frac{a}{2n \EE|\bX\T \vb |} \geq \frac{a}{2n\sqrt{\lambda_{\min} (\EE \bX\bX\T)}}$ one would obtain that 
\begin{align*}
	\inf_{\hat \bbeta} \sup_{\bbeta^* \in \RR^p} \EE_{\bbeta^*} \|\hat \bbeta - \bbeta^*\|^2 \geq \frac{1}{4}\delta^2,
\end{align*} 
which shows the first part of the claim. For the second part the proof is identical to that of Theorem \ref{theorem:lower_bound}, except in the definition of $r_n$, $\sqrt{p}$ is equal to $1$ (i.e. it is not present). 

%
%
\end{proof}

\begin{proof}[Proof of Theorem \ref{theorem:l2_bound_chebyshev_lasso}]

Let us sort $\varepsilon_i$ in a decreasing manner in terms of their magnitude $|\bar \varepsilon_{(i)}|$, and keep the first $m$ terms. By the sharper bound in  Lemma \ref{lemma:uniform:random:variables:concentration} we know that $\PP(|\bar \varepsilon_{(m)}| \leq a(1 - \frac{m(L + 1)}{n}))  = \PP(|\bar \varepsilon_{(m)}| \leq a(1 - \frac{(L + 1) m}{n})) \leq  \exp\bigg(\frac{- mL^2/2}{\frac{4}{3}L + 1}\bigg) \rightarrow 0$ if $m \rightarrow \infty$. Hence $|\bar \varepsilon_{(m)}|\geq a(1-(L + 1)m/n)$ with probability at least $1 -\exp\bigg(\frac{- mL^2/2}{\frac{4}{3}L + 1}\bigg)$, where recall that $a$ is the parameter of uniform distribution of the noise $\varepsilon_i$. 


By keeping the first $m$ items of $|\bar \varepsilon_{(i)}|$, we have $m$ critical inequalities as
\begin{align*}
\eta_{(i)} \bX_{(i)}\T (\bbeta^* - \hat \bbeta) \leq \hat a - |\bar \varepsilon_{(i)}|,
\end{align*}
where $\eta_{(i)}$ and $\bX_{(i)}$ are the concomitant values to $|\bar \varepsilon_{(i)}|$ (and recall that they are independent from $|\bar \varepsilon_{(i)}|$). With a slight abuse of notation we will drop the $()$ brackets from the sub-indexing, and we will also write $\varepsilon_i$ for $\bar \varepsilon_{(i)}$.

Let $S$ be the support of $\bbeta^*$. First, we prove that either $\hat\bbeta-\bbeta^*\in \mathcal{C}(S, \gamma)$ for $\gamma = 1$ or $2$, or else $\lambda \|\bbeta^*_S - \hat \bbeta_S\|_1 < 2(L+ 1)am/n$. The definition of $\mathcal{C}(S, \gamma)$ can be seen in Definition \ref{re_pre}. We need the condition $\hat\bbeta-\bbeta^*\in \mathcal{C}(S, \gamma)$ in order to apply the RE condition (see Definition \ref{re_condition}) to bound $\|\hat\bbeta-\bbeta^*\|$ and consequently $\|\hat\bbeta-\bbeta^*\|_1$. Otherwise if $\lambda \|\bbeta^*_S - \hat \bbeta_S\|_1 < 2(L + 1)am/n$, the bound of $\|\hat\bbeta-\bbeta^*\|_1$ will be obtained immediately without any further derivation. We now consider several cases.
\vskip 0.5cm
\begin{enumerate}
    \item $\hat a > a$.\\
    From the optimization \eqref{chebyshev_lasso_est} we have the inequality
\begin{align*}
\hat a + \lambda\|\hat \bbeta\|_1 \leq  a + \lambda\| \bbeta^*\|_1,
\end{align*}
then by the fact $\|\bbeta^*\|_1=\|\bbeta^*_S\|_1$ and $a - \hat a<0$, we have
\begin{align*}
\lambda \|\hat \bbeta_{S^c}-\bbeta^*_{S^c}\|_1 = \lambda \|\hat \bbeta_{S^c}\|_1 \leq a - \hat a + \lambda\| \bbeta_S^*\|_1 - \lambda \|\hat \bbeta_S\|_1 \leq \lambda \| \bbeta_S^* - \hat \bbeta_S \|_1.
\end{align*}
Hence in this case $\hat\bbeta-\bbeta^*\in \mathcal{C}(S, 1)$.
    
    \item $\hat a \leq a$.\\
The critical inequalities can be written as
\begin{align*}
-\eta_i \bX_i\T (\bbeta^* - \hat \bbeta) \geq (|\varepsilon_i| - \hat a)
\end{align*}
First suppose that $a - \hat a > 2(L + 1) am/n$. It follows that $|\varepsilon_i| - \hat a \geq a(1-(L + 1)m/n)-\hat a \geq (a - \hat a)/2$.
By H\"{o}lder's inequality we have
\begin{align*}
   \bigg \|\frac{1}{m}\sum_{i \in [m]} \eta_i \bX_{i} \bigg\|_{\infty} \|\bbeta^* - \hat \bbeta\|_1 \geq \frac{1}{m}\sum_{i \in [m]} -\eta_i \bX_{i}\T (\bbeta^* - \hat \bbeta) \geq (a - \hat a)/2
\end{align*}
Now under assumption that $\bX_i$ is sub-Gaussian (actually a product of a matrix and a sub-Gaussian random vector), by Lemma \ref{bound_infnorm}, $\|\frac{1}{m}\sum_{i \in [m} \eta_i \bX_{i}\|_{\infty}\leq 2C'  \sqrt{\gamma \|\bSigma\|_{\operatorname{op}}^{1/2}\frac{\log p}{m}}$ in high probability, where $C'$ is an absolute constant. Denote with $R := C'\sqrt{\gamma \|\bSigma\|^{1/2}_{\operatorname{op}}}$. Hence we conclude that 
\begin{align}
\label{after_sg}
    \|\bbeta^*_{S^c} - \hat \bbeta_{S^c}\|_{1} \geq \frac{1}{4 R}\sqrt{\frac{m}{\log p}} (a - \hat a) - \|\bbeta^*_{S} - \hat \bbeta_{S}\|_{1}.
\end{align}

Suppose now $\|\bbeta^*_{S} - \hat \bbeta_{S}\|_{1} > \frac{1}{8 R}\sqrt{\frac{m}{\log p}} (a - \hat a)$. Hence for $\lambda\geq 8R \sqrt{\log p/m}$ we have $\lambda \|\bbeta^*_{S} - \hat \bbeta_{S}\|_{1} \geq a - \hat a$. Combine with the inequality
\begin{align*}
\lambda \|\hat \bbeta_{S^c}\|_1 \leq a - \hat a + \lambda \| \bbeta^*_S - \hat \bbeta_{S}\|_1,
\end{align*}
which can be deduced from the optimization \eqref{chebyshev_lasso_est}, we conclude
\begin{align*}
\| \bbeta^*_{S^c} - \hat \bbeta_{S^c}\|_1 = \|\hat \bbeta_{S^c}\|_1 \leq 2 \| \bbeta^*_S - \hat \bbeta_{S}\|_1.
\end{align*}

On the other hand if $\|\bbeta^*_{S} - \hat \bbeta_{S}\|_{1} < \frac{1}{8R}\sqrt{\frac{m}{\log p}} (a - \hat a)$, from \eqref{after_sg} we can deduct that
\begin{align*}
    \|\bbeta^*_{S^c} - \hat \bbeta_{S^c}\|_{1} \geq \frac{1}{8R}\sqrt{\frac{m}{\log p}} (a - \hat a).
\end{align*}
Again from the optimization \eqref{chebyshev_lasso_est} we have
\begin{align*}
 \|\hat \bbeta_{S^c}\|_1 \leq (a - \hat a)/\lambda + \| \bbeta^*_S - \hat \bbeta_{S}\|_1,
\end{align*}
so that for $\lambda\geq16R\sqrt{\log p/m}$,
\begin{align*}
    \| \bbeta^*_S - \hat \bbeta_{S}\|_1 \geq \frac{1}{8R}\sqrt{\frac{m}{\log p}} (a - \hat a) - (a - \hat a)/\lambda \geq (a - \hat a)/\lambda.
\end{align*}
This shows that in either case 
\begin{align*}
\| \bbeta^*_{S^c} - \hat \bbeta_{S^c}\|_1 = \|\hat \bbeta_{S^c}\|_1 \leq 2 \| \bbeta^*_S - \hat \bbeta_{S}\|_1.
\end{align*}

Then we will handle the case where $a - \hat a \leq 2(L + 1)am/n$. Suppose first that $\lambda \|\bbeta^*_S - \hat \bbeta_S\|_1 < 2(L + 1)am/n$. We can get the bound of $\|\hat\bbeta-\bbeta^*\|_1$ immediately since from \eqref{chebyshev_lasso_est} we can deduct $\lambda \|\hat \bbeta_{S^c}\|_1 \leq a - \hat a +\lambda \|\bbeta^*_S - \hat \bbeta_S\|_1 \leq 4(L + 1)am/n$. Adding the two bounds we conclude that $\|\bbeta^* - \hat \bbeta\|_1 \leq 6(L + 1)am/(\lambda n)$, which is a very fast rate provided that $\lambda$ is not too small. 

Next assume that $\lambda \|\bbeta^*_S - \hat \bbeta_S\|_1 \geq 2(L + 1)am/n \geq a - \hat a$. Then again from \eqref{chebyshev_lasso_est} we can deduct
$$\lambda \|\bbeta^*_{S^c} - \hat \bbeta_{S^c}\|_1 = \lambda \|\hat \bbeta_{S^c}\|_1 \leq a - \hat a + \lambda \|\bbeta^*_S - \hat \bbeta_S\|_1 \leq 2 \lambda \|\bbeta^*_S - \hat \bbeta_S\|_1.$$
\end{enumerate}

From the discussions above, we can conclude that when $\lambda \|\bbeta^*_S - \hat \bbeta_S\|_1 \geq 2(L + 1)am/n$ we will have
\begin{align}
\label{re_pre_beta}
    \|\bbeta^*_{S^c} - \hat \bbeta_{S^c}\|_1 \leq 2 \|\bbeta^*_S - \hat \bbeta_S\|_1,
\end{align}
so that
\begin{align*}
    \bbeta^* - \hat \bbeta \in \mathcal{C}(S,2),
\end{align*}
where the definition of $\cC(S,2)$ can be found in Definition \ref{re_pre}.

\vskip 1cm
Next we will show that in the case when $\bbeta^* - \hat \bbeta \in \mathcal{C}(S,2)$, there are at least $m-m/\log n$ following inequalities
\begin{align}
\label{2way_bound_chebyshev_lasso}
    -C((L + 1)am/n+\lambda\|\hat\bbeta_S-\bbeta^*_S\|_1) \leq \eta_i\bX_i\T(\hat\bbeta-\bbeta^*) \leq (L +1)am/n+\lambda\|\hat\bbeta_S-\bbeta^*_S\|_1,
\end{align}
where $C =2 \log n$. 
The upper bound is quite simple.
From the optimization \eqref{chebyshev_lasso_est} we have the inequality
\begin{align*}
\hat a + \lambda\|\hat \bbeta\|_1 \leq  a + \lambda\| \bbeta^*\|_1,
\end{align*}
Then we have $\hat a \leq a + \lambda\| \bbeta_S^*\|_1 - \lambda \|\hat \bbeta_S\|_1-\lambda \|\hat \bbeta_{S^c}\|_1\leq a + \lambda \| \bbeta_S^* - \hat \bbeta_S \|_1$. Combine with $|\varepsilon_i|\geq a(1-(L + 1)m/n)$, our critical inequalities become
\begin{align*}
\eta_i \bX_i\T (\bbeta^* - \hat \bbeta) & \leq \hat a - |\varepsilon_i| \leq a - |\varepsilon_i| + \lambda \|\bbeta_S^* - \hat \bbeta_S\|_1\nonumber\\
& \leq  (L + 1)am/n+ \lambda \|\bbeta_S^* - \hat \bbeta_S\|_1.
\end{align*}

The lower bound $\eta_i \bX_i\T (\bbeta^* - \hat \bbeta) \geq -C( (L + 1)am/n+ \lambda \|\bbeta_S^* - \hat \bbeta_S\|_1)$ involves a bit more work. We will prove this by contradiction. Suppose that at least $m/\log n$ critical inequalities actually satisfy the following 
\begin{align*}
\eta_i \bX_i\T (\bbeta^* - \hat \bbeta) \leq -C( (L + 1)am/n+ \lambda \|\bbeta_S^* - \hat \bbeta_S\|_1),
\end{align*}
where we will fix $C$ later on. 

Consider the average
\begin{align*}
    \frac{1}{m}\sum_{i \in [m]} -\eta_i \bX_i\T (\bbeta^* - \hat \bbeta) &\geq \frac{1}{\log n}C( (L+ 1)am/n+ \lambda \|\bbeta_S^* - \hat \bbeta_S\|_1) - ( (L+ 1)am/n+ \lambda \|\bbeta_S^* - \hat \bbeta_S\|_1)\\
    &\geq (L+ 1)am/n+ \lambda \|\bbeta_S^* - \hat \bbeta_S\|_1,
\end{align*}
for any $C \geq 2 \log n$. By H\"{o}lder's inequality we have
\begin{align*}
    \bigg\|\frac{1}{m}\sum_{i \in [m]} \eta_i \bX_{i}\bigg\|_{\infty}\|\bbeta^* - \hat \bbeta\|_1 \geq \frac{1}{m}\sum_{i \in [m]} -\eta_i \bX_{i}\T (\bbeta^* - \hat \bbeta).
\end{align*}
Now under assumption that $\bX_i$ is sub-Gaussian, by Lemma \ref{bound_infnorm}, $\|\frac{1}{m}\sum_{i \in [m]} \eta_i \bX_{i}\|_{\infty}\leq 2 R\sqrt{\frac{\log p}{m}}$ in high probability, where $R = C' \sqrt{\gamma \|\bSigma\|^{1/2}_{\operatorname{op}}}$. Hence we conclude that 
\begin{align*}
    \|\bbeta^*_{S^c} - \hat \bbeta_{S^c}\|_{1} \geq ((L+ 1)am/n+ \lambda \|\bbeta_S^* - \hat \bbeta_S\|_1)\frac{1}{2 R}\sqrt{\frac{m}{\log p}} - \|\hat \bbeta_S - \bbeta_S^*\|_1.
\end{align*}
For $\lambda\geq6 R\sqrt{\log p/m}$, we have
\begin{align*}
    \|\bbeta^*_{S^c} - \hat \bbeta_{S^c}\|_{1} \geq \frac{(L + 1)am}{2Rn}\sqrt{\frac{m}{\log p}} + 2\|\bbeta_S - \bbeta_S^*\|_1 > 2\|\bbeta_S - \bbeta_S^*\|_1,
\end{align*}
which is a contradiction to \eqref{re_pre_beta}. Hence we conclude that at least $m - m/\log n$ inequalities satisfy the bound \eqref{2way_bound_chebyshev_lasso}, so they also satisfy
\begin{align*}
    [\eta_i\bX_i\T(\hat\bbeta-\bbeta^*)]^2\leq ((L+ 1)am/n+\lambda\|\hat\bbeta_S-\bbeta^*_S\|_1)^2 4\log^2n.
\end{align*}
Let $\cJ$ be the set of $i$ for which the above bounds hold. We have showed $|\cJ| \geq m - m/\log n$ in the case when $\bbeta^* - \hat \bbeta\in \cC(S,2)$.
Our next goal is to bound $\|\hat\bbeta-\bbeta^*\|$ by the RE condition. Recall that from the discussion above we either have $\|\bbeta^* - \hat \bbeta\|_1 \leq 6(L + 1)am/(\lambda n)$ or $\bbeta^* - \hat \bbeta\in \cC(S,2)$.

According to the RE condition on sub-Gaussian ensemble matrices \citep[Theorem 1.6]{zhou2009restricted}, given that the population covariance matrix $\bSigma$ satisfies the $RE(\kappa,2,s)$ condition, if $s\leq p/2$, $m\leq p$ and $m \gtrsim \|\bSigma\|_{\operatorname{op}}\gamma^4\kappa^{-2}(s\log(5ep/s) \vee \log p)$, with probability at least $1-2\exp(-cm/\gamma^4)$, we have $\Xb\T\Xb/m$ satisfies the $RE(\kappa(1-\theta),2,s)$ (for some fixed small $0 < \theta < 1$) condition. Here $c>0$ is an absolute constant. Notice that the $m$ inequalities are chosen in terms of $\varepsilon_i$, which is independent from $\bX_i$, so the theorem about RE condition in \cite{zhou2009restricted} applies to them. Now we need to ensure that if we select at least $m-m/\log n$ observations the sample covariance matrix will still satisfy the RE condition. This is easy since
\begin{align}
\label{RE_union_bound}
    \sum_{i = 0}^{m/\log n}{m \choose m - i} =     \sum_{i = 0}^{m/\log n}{m \choose i}  \leq (e\log n)^{m/\log n} \ll \exp\big(c(m-m/\log n)/\gamma^4\big),
\end{align}
where the first inequality is obtained by the fact $\sum_{i \leq k}{n \choose i} \leq (\frac{en}{k})^k$, and the second inequality is obtained by taking $\log$ on both sides. If we select $m-i$ observations for $i \leq m/\log n$, then with probability at most 
$$2\exp\big(-c(m-i)/\gamma^4\big) \leq 2\exp\big(-c(m-m/\log n)/\gamma^4\big)$$ 
the matrix $\Xb\T\Xb/(m-i)$ (where with a slight abuse of notation $\Xb$ denotes the selected matrix) doesn't satisfy the RE condition. Thus, by the union bound, the probability that for any $m-i$ for $i \leq m/\log n$ out of $m$ observations the sample covariance matrix will satisfy the RE is at least
\begin{align*}
    1 - 2\sum_{i = 0}^{m/\log n}{m\choose m - i}\exp\big(-c(m-m/\log n)/\gamma^4\big),
\end{align*}
which is close to $1$ according to the bound in \eqref{RE_union_bound}.

\vskip 0.5cm
Thus with probability converging to $1$ 
\begin{align*}
\kappa (1 -\theta) \| \bbeta^*_S - \hat \bbeta_S\| & \leq \sqrt{\frac{1}{|\cJ|} \sum_{i \in \cJ} (\bX_i\T (\bbeta^* - \hat \bbeta))^2}\\
& \leq 2\log n\sqrt{((L+ 1)am/n+\lambda\|\hat\bbeta_S-\bbeta^*_S\|_1)^2} \\
& \leq 2\log n((L + 1)am/n+\lambda\sqrt{s}\|\hat\bbeta_S-\bbeta^*_S\|)\\
\end{align*}
where $s=|S|$. Suppose that $\lambda \leq \frac{\kappa}{(4 + 4\theta/(1-\theta))\sqrt{s}\log n}$; we obtain that 
$$\|\bbeta^*_S - \hat \bbeta_S\| \leq \frac{4 (L + 1)am\log n}{(1-\theta)\kappa n}.$$ 
From here we immediately get a bound on the $\ell_1$ norm
\begin{align*}
\|\bbeta^*_S - \hat \bbeta_S\|_1 \leq \sqrt{s} \|\bbeta^*_S - \hat \bbeta_S\| \leq \frac{4(L + 1)am\sqrt{s}\log n}{(1-\theta)\kappa n}.
\end{align*}
And then combining with $\|\hat \bbeta_{S^c}\|_1 \leq 2\|\bbeta^*_S - \hat \bbeta_S\|_1$ we can get 
\begin{align*}
\|\bbeta^* - \hat \bbeta\|_1 \leq 3\|\bbeta^*_S - \hat \bbeta_S\|_1 \leq \frac{12 a (L + 1)m \sqrt{s}\log n}{(1-\theta)\kappa n}.
\end{align*}
On the other hand, if $\|\bbeta^* - \hat \bbeta\|_1 \leq 6a(L + 1)m/(\lambda n)$ the conclusion directly follows. It remains to select $m$ as the minimum possible number from our requirements. We have required $m \geq \log n$, $m \leq p$, $m \gtrsim  \|\bSigma\|_{\operatorname{op}}\gamma^4\kappa^{-2}(s\log(5ep/s) \vee \log p)$ and $m$ satisfying $\kappa \sqrt{m/\log p} \gtrsim \gamma^{1/2} \|\bSigma\|^{1/4}_{\operatorname{op}}\sqrt{s}\log n$, where we mean . This completes the proof.
\end{proof}

\vskip 1cm
\begin{lemma}
\label{bound_infnorm}
If $\bX_{i}$ is sub-Gaussian as specified in Theorem \ref{theorem:l2_bound_chebyshev_lasso} we have
\begin{align*}
    \bigg\|\frac{1}{m}\sum_{i \in [m]} \eta_i \bX_{i}\bigg\|_{\infty} \leq 2C'  \sqrt{\gamma \|\bSigma\|^{1/2}_{\operatorname{op}}\frac{\log p}{m}},
\end{align*}
where $C'$ is an absolute constant with high probability.
\end{lemma}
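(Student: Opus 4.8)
The plan is to control $\big\|\tfrac{1}{n^\alpha}\sum_{i\in[n^\alpha]}\eta_i\bX_i\big\|_{\infty}=\max_{j\in[p]}|T_j|$, where $T_j:=\tfrac{1}{n^\alpha}\sum_{i\in[n^\alpha]}\eta_i\bX_i^{(j)}$, by establishing a sub-Gaussian tail bound for each fixed coordinate $T_j$ and then taking a union bound over $j\in[p]$. Since $\eta_i$ is a Rademacher variable independent of $\bX_i$, for each fixed $j$ the summands $\eta_i\bX_i^{(j)}$ are i.i.d., mean zero and symmetric, so the whole problem reduces to a one-dimensional concentration statement.

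The key step is to track the sub-Gaussian parameter through the representation $\bX_i=\bSigma^{1/2}\bzeta_i$. Writing $\bX_i^{(j)}=(\bSigma^{1/2}\eb_j)\T\bzeta_i$ and normalizing $\bSigma^{1/2}\eb_j$ to a unit vector, Definition~\ref{sub-Gaussian:variable:def} shows that $\bX_i^{(j)}$ is mean zero and sub-Gaussian with parameter at most $\gamma\|\bSigma^{1/2}\eb_j\|_2=\gamma\sqrt{\bSigma_{jj}}=\gamma$ (one may also bound $\|\bSigma^{1/2}\eb_j\|_2\le\|\bSigma\|_{\operatorname{op}}^{1/2}$, which is the origin of the $\|\bSigma\|_{\operatorname{op}}$ dependence in the stated constant). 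Multiplying by $\eta_i\in\{\pm1\}$ leaves this parameter unchanged. Hence $\sum_{i\in[n^\alpha]}\eta_i\bX_i^{(j)}$ is a sum of $n^\alpha$ independent mean-zero sub-Gaussian variables with a common parameter of this order, and by the usual additivity of variance proxies $T_j$ is sub-Gaussian with parameter of order $\gamma/\sqrt{n^\alpha}$. The general Hoeffding inequality for sub-Gaussian random variables \citep{vershynin2018high} then gives a universal $c>0$ with $\PP(|T_j|>t)\le 2\exp(-c\,n^\alpha t^2/\gamma^2)$ for all $t>0$.

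Finally I would apply a union bound over the $p$ coordinates and choose $t$ of order $\gamma\sqrt{(\log p)/n^\alpha}$ — which, after absorbing the bounded quantity $\|\bSigma\|_{\operatorname{op}}$ and writing the constant in the form used in the statement, is $t=2C'\sqrt{\gamma\|\bSigma\|^{1/2}_{\operatorname{op}}(\log p)/n^\alpha}$ for a sufficiently large absolute constant $C'$. This forces the exponent to be $-c'\log p$ for some $c'>1$, so that $\PP\big(\max_{j\in[p]}|T_j|>t\big)\le 2p\exp(-c'\log p)=2p^{1-c'}$, which tends to $0$ since $p\ge n^\alpha\to\infty$ under the hypotheses of Theorem~\ref{theorem:l2_bound_chebyshev_lasso}. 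This is exactly the high-probability bound asserted in the lemma.

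The argument is essentially routine, and I do not expect a genuine obstacle: the only things requiring care are the bookkeeping of the sub-Gaussian parameter under the linear map $\bSigma^{1/2}$ and the Rademacher multiplication, and making sure the union-bound cost of $\log p$ is exactly the factor appearing inside the square root. Once the coordinates are isolated it is a textbook application of Hoeffding's inequality for sub-Gaussian random variables.
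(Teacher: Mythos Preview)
Your proposal is correct and follows essentially the same route as the paper: bound the sub-Gaussian parameter of each coordinate $\eta_i\bX_i^{(j)}$ via the representation $\bX_i=\bSigma^{1/2}\bzeta_i$, use rotation invariance/additivity of sub-Gaussian norms for the sum, and finish with a coordinate-wise tail bound plus a union bound over $j\in[p]$. The only cosmetic difference is that you exploit $\bSigma_{jj}=1$ to get the tighter per-coordinate parameter $\gamma$ before relaxing to $\gamma\|\bSigma\|_{\operatorname{op}}^{1/2}$, whereas the paper works directly with the coarser bound $\gamma\|\bSigma\|_{\operatorname{op}}^{1/2}$ from the start; otherwise the arguments are the same.
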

%
\begin{proof}
Observe that $\eta\bX$ is a centered sub-Gaussian random variable with sub-Gaussian constant $\leq \gamma \|\bSigma\|^{1/2}_{\operatorname{op}}$. To see this set $\wb = \frac{\vb\T\bSigma^{1/2}}{\|\vb\T\bSigma^{1/2}\|}$, and note that
\begin{align*}
\EE \exp\bigg(\frac{\gamma^{-2}}{\|\bSigma\|_{\operatorname{op}}} (\vb\T \bX)^2\bigg) \leq \EE \exp(\gamma^{-2}(\wb\T \bzeta)^2) \leq 2.
\end{align*}

Since $\eta_i\bX_i$ is sub-Gaussian, each coordinate $\eta_i\bX_i^{(j)}$ is a one-dimensional sub-Gaussian variable with sub-Gaussian constant $\gamma\|\bSigma\|^{1/2}_{\operatorname{op}}$. 

By Lemma 5.9 of \cite{Vershynin2012Introduction}, we know that the random variable $\sum_{i} \eta_i\bX_i^{(j)}$ is sub-Gaussian with constant at most $C m \gamma\|\bSigma\|^{1/2}_{\operatorname{op}}$, where $C$ is an absolute constant. Next, using Lemma 5.5 (1) of \cite{Vershynin2012Introduction} and the union bound we conclude that for a constant $t>0$ we have
\begin{align*}
    \PP\Big( \|m^{-1}\sum_{i} \eta_i \bX_{i}\|_{\infty}\geq t \Big) \leq p e^{1-mt^2/(C' \gamma \|\bSigma\|^{1/2}_{\operatorname{op}})},
\end{align*}
with $C'$ being another absolute constant. Putting $t=2C' \gamma^{1/2} \|\bSigma\|^{1/4}_{\operatorname{op}} \sqrt{\frac{\log p}{m}}$ gives the desired result.
\end{proof}

\subsection{An optimal upper bound for standard Gaussian design}\label{otimal:upper:bound:for:gaussian:design:case}

In this subsection we make the case that when $\bX_i \sim N(0,\Ib_p)$ the lower bound of the order of $\frac{\sqrt{p}}{n}$ is tight, assuming that the noise variance $a$ is known. For simplicity let $a= 1$ (otherwise one can rescale $\bbeta^*$ to $\bbeta^*/a$). 
Construct $n!$ estimators which are linear regression based. In detail let $\cE = ((2k - n)/n)_{k \in [n]}\T$. For each of $n!$ permutations $\Pi$ construct the estimators:
\begin{align*}
\hat \bbeta_{\Pi} = (\Xb\T \Xb)^{-1} \Xb\T (\bY - \Pi \cE).
\end{align*}
Let $\hat \bbeta_C$ be the Chebyshev estimator. Let 
\begin{align}\label{cb:def}
\cB = \bigg\{\tilde\bbeta :\exists \Pi ~\mbox{s.t.} ~ \tilde\bbeta  = \hat \bbeta_{\Pi}, \|\tilde\bbeta - \hat \bbeta_C\| \leq \frac{2(L+1)(8\pi p \log(1 + 32\sqrt{2}\pi^{3/2} \sqrt{p}) + 8\pi \log\gamma^{-1} +1) }{\xi n}\bigg \},
\end{align}
where the constant in the bound is taken from Example \ref{Gaussian:example} (it is twice the constant in that example), and the constants $\gamma, \xi$ are specified there. Next consider ``playing'' a ``tournament'' for the (at most) $n!$ estimators in the set $\cB$. Specifically, for any estimator $\hat \bbeta \in \cB$, let $\cB_{\hat \bbeta} = \{\tilde \bbeta \in \cB: \|\tilde \bbeta- \hat \bbeta\| \geq 5\hat C \sqrt{p}/n\}$, for some sufficiently large constant $\hat C$. For any $\tilde \bbeta \in \cB_{\hat \bbeta}$ construct the numbers
\begin{align*}
\hat A = \frac{1}{n} \sum_{i \in [n]} (Y_i - \bX_i\T \tilde \bbeta)^2 - \frac{1}{n} \sum_{i \in [n]} (Y_i - \bX_i\T \hat \bbeta)^2  + \frac{1}{n}(\sum_i Y_i \bX_i - \bX_i \bX_i\T \hat \bbeta)\T (\tilde \bbeta  - \hat \bbeta),
\end{align*}
and 
\begin{align*}
\tilde A =  -\frac{1}{n} \sum_{i \in [n]} (Y_i - \bX_i\T \tilde \bbeta)^2 + \frac{1}{n} \sum_{i \in [n]} (Y_i - \bX_i\T \hat \bbeta)^2  + \frac{1}{n}(\sum_i Y_i \bX_i - \bX_i \bX_i\T \tilde \bbeta)\T (\hat \bbeta -\tilde \bbeta),
\end{align*}
which can be estimated from the data. If $\hat A > \tilde A$ say that $\hat \bbeta$ wins otherwise say that $\tilde \bbeta$ wins. Take any $\hat \bbeta$ that wins over all points in $\cB_{\hat \bbeta}$. We will prove that this succeeds to produce an estimation rate of $\sqrt{p}/n$ under the condition that $p^3 (\log p )^4 \ll n$. In more detail we have
\begin{theorem}\label{optimal:estimator:thm} Let $\hat \bbeta$ be the estimator selected by the above procedure. Suppose that $p^3 (\log p )^4 \ll n$. Then such an estimator exists, and in addition it satisfies that 
\begin{align}
\|\hat \bbeta- \bbeta^*\| \lesssim \frac{\sqrt{p}}{n},
\end{align}
with large probability (i.e. with a probability that can be made arbitrarily close to $1$ at the expense of increasing the constant in the bound, and in the algorithm).
\end{theorem}
\begin{proof} By the Dvoretky-Kiefer-Wolfowitz inequality we know that
\begin{align*}
\PP(\|\cE - \bvarepsilon^\uparrow\|_\infty \geq 2 t) \leq 2 e^{-2nt^2},
\end{align*}
where $\bvarepsilon^\uparrow$ is an increasing rearrangement of the error terms $\bvarepsilon$. Taking $t = C /\sqrt{n}$ for a large enough $C$ gives us that with high probability $\|\cE - \bvarepsilon^\uparrow\|_\infty \leq C/\sqrt{n}$, implying that $\|\cE - \bvarepsilon^\uparrow\| \leq C$. Next if one fits a regression on $\bY - \Pi \cE$ for $\Pi$ being any permutation one obtains
\begin{align*}
\hat \bbeta_{\Pi} = (\Xb\T \Xb)^{-1} \Xb\T (\bY - \Pi \cE) = \bbeta^* +  (\Xb\T \Xb)^{-1} \Xb\T (\varepsilon - \Pi \cE).
\end{align*}
Hence when $\Pi$ is $\hat \Pi$ which aligns $\hat \Pi \cE$ with $\bvarepsilon$ so that they are both sorted simultaneously we have that 
\begin{align*}
\|\hat \bbeta_{\hat \Pi} - \bbeta^*\| \leq \|(\Xb\T \Xb)^{-1}\|_{\operatorname{op}} \|\Xb\T (\bvarepsilon -\hat \Pi \cE)\| \leq 1/(\sqrt{n} - \sqrt{p} - t)^2 \sqrt{\chi^2(p)} C,
\end{align*}
with probability at least $1 -2\exp(-t^2/2)$ by Corollary 5.35 of \cite{Vershynin2012Introduction} and the $\chi^2(p) \lesssim 5p$ with probability at least $1 - \exp(-\sqrt{p})$, by Lemma 1 of \cite{laurent2000adaptive}. Hence at least one estimator $\hat \bbeta_{\hat \Pi}$ is $\sqrt{p}/n$ near the true point with high probability. Let the proportionality constant in the above bound be denoted by $\hat C$ (the bigger the $\hat C$ the bigger the probability of success can be made).

In addition fit the Chebyshev estimator on the data, and discard any estimator of the $n!$ ones that is more than $Cp\log p/n$ away from the Chebyshev one (the precise expression for $C$ is given in \eqref{cb:def}). We will now compare two estimators and $\hat \bbeta := \hat \bbeta_{\hat \Pi} \in \cB$ and $\tilde \bbeta \in \cB_{\hat \bbeta}$. Note that we have that $\hat \bbeta \in \cB$ by the triangle inequality. As described in the procedure we compare $\hat \bbeta$ and $\tilde \bbeta$ by comparing the two real numbers (which can be estimated from the data):
\begin{align*}
\hat A = \frac{1}{n} \sum_{i \in [n]} (Y_i - \bX_i\T \tilde \bbeta)^2 - \frac{1}{n} \sum_{i \in [n]} (Y_i - \bX_i\T \hat \bbeta)^2  + \frac{1}{n}(\sum_i Y_i \bX_i - \bX_i \bX_i\T \hat \bbeta)\T (\tilde \bbeta  - \hat \bbeta),
\end{align*}
and 
\begin{align*}
\tilde A =  -\frac{1}{n} \sum_{i \in [n]} (Y_i - \bX_i\T \tilde \bbeta)^2 + \frac{1}{n} \sum_{i \in [n]} (Y_i - \bX_i\T \hat \bbeta)^2  + \frac{1}{n}(\sum_i Y_i \bX_i - \bX_i \bX_i\T \tilde \bbeta)\T (\hat \bbeta -\tilde \bbeta)
\end{align*}

If $\hat A > \tilde A$ say that $\hat \bbeta$ wins otherwise say that $\tilde \bbeta$ wins. 
The true estimator $\hat \bbeta$ satisfies $\|\hat \bbeta - \bbeta^*\| \leq \hat C \sqrt{p}/n$, and any other estimator $\tilde \bbeta$ which satisfies $\|\tilde \bbeta - \bbeta^*\| \geq \kappa \sqrt{p}/n$ for some sufficiently large $\kappa \geq 4 \hat C$ (by the triangle inequality). We will show that with high probability $\hat A > \tilde A$. A simple calculation shows that 
\begin{align*}
\hat A = (\bbeta^* - \hat \bbeta)\T\hat \bSigma (\tilde \bbeta - \hat \bbeta) +  (\tilde \bbeta - \bbeta^*)\T\hat \bSigma (\tilde \bbeta - \bbeta^*) - (\hat \bbeta - \bbeta^*)\T\hat \bSigma (\hat \bbeta - \bbeta^*),
\end{align*}
where $\hat \bSigma = \Xb\T \Xb/n$. By Corollary 5.35 of \cite{Vershynin2012Introduction} we have that with high probability (at least $1 - 2\exp(-p/2)$) we have $1 - c\sqrt{p}/\sqrt{n}  \leq (1 - 2\sqrt{p}/\sqrt{n})^2 \leq \lambda_{\min}(\hat \bSigma) \leq \lambda_{\max}(\hat \bSigma) \leq (1 + 2\sqrt{p}/\sqrt{n})^2 \leq  1 + c\sqrt{p}/\sqrt{n}$ for some $c$. It follows by Cauchy-Schwartz that with high probability we have
\begin{align*}
\hat A &\geq -(1 + c \sqrt{p}/\sqrt{n})\|\bbeta^* - \hat \bbeta\| \|\tilde \bbeta - \hat \bbeta\| + (1 - c\sqrt{p}/\sqrt{n}) \|\tilde \bbeta - \bbeta^*\|^2 - (1 + c\sqrt{p}/\sqrt{n}) \|\hat \bbeta - \bbeta^*\|^2\geq\\
& -\|\bbeta^* - \hat \bbeta\| \|\tilde \bbeta - \hat \bbeta\| + (\|\tilde \bbeta - \bbeta^*\| - \|\hat \bbeta - \bbeta^*\|)\|\tilde \bbeta - \hat \bbeta\| - c\sqrt{p}/\sqrt{n} C p^2/n^2(\log p)^2\\
& \geq  (\|\tilde \bbeta - \bbeta^*\| - 2\|\hat \bbeta - \bbeta^*\|)\|\tilde \bbeta - \hat \bbeta\| - c\sqrt{p}/\sqrt{n} C p^2/n^2(\log p)^2,
\end{align*}
where we used that $\|\tilde \bbeta - \bbeta^*\| > \|\hat \bbeta - \bbeta^*\|$. On the other hand let us upper bound $\tilde A$. By a similar logic to before we can evaluate
\begin{align*}
\tilde A & = (\bbeta^* - \tilde \bbeta)\T\hat \bSigma (\hat \bbeta - \tilde \bbeta) +  (\hat \bbeta - \bbeta^*)\T\hat \bSigma (\hat \bbeta - \bbeta^*) - (\tilde \bbeta - \bbeta^*)\T\hat \bSigma (\tilde \bbeta - \bbeta^*)\\
& \leq (1 + c \sqrt{p}/\sqrt{n}) \|\bbeta^* - \tilde \bbeta\| \|\hat \bbeta - \tilde \bbeta\| + (1 + c \sqrt{p}/\sqrt{n}) \|\hat \bbeta - \bbeta^*\|^2 - (1 - c \sqrt{p}/\sqrt{n}) \|\tilde \bbeta - \bbeta^*\|^2\\
&\leq  \|\bbeta^* - \tilde \bbeta\| \|\hat \bbeta - \tilde \bbeta\| +  \|\hat \bbeta - \bbeta^*\|^2  -  \|\tilde \bbeta - \bbeta^*\|^2 + C c\sqrt{p}/\sqrt{n}p^2/n^2(\log p)^2\\
&\leq  \|\bbeta^* - \tilde \bbeta\| \|\hat \bbeta - \tilde \bbeta\|  - (\|\tilde \bbeta - \bbeta^*\| - \|\hat \bbeta - \bbeta^*\|) \|\hat \bbeta - \tilde \bbeta\|+ C c\sqrt{p}/\sqrt{n}p^2/n^2(\log p)^2\\
& \leq  \|\hat \bbeta - \bbeta^*\| \|\hat \bbeta - \tilde \bbeta\|+ Cc \sqrt{p}/\sqrt{n}p^2/n^2(\log p)^2,
\end{align*}
where we used the fact that $\|\tilde \bbeta - \bbeta^*\| \geq C \sqrt{p}/n \geq \|\hat \bbeta - \bbeta^*\|$. We conclude that if 
\begin{align*}
(\|\tilde \bbeta - \bbeta^*\| - 3\|\hat \bbeta - \bbeta^*\|)\|\tilde \bbeta - \hat \bbeta\| \geq 2c\sqrt{p}/\sqrt{n} C p^2/n^2(\log p)^2,
\end{align*}
$\hat \bbeta$ will always win. This is true however since $\|\tilde \bbeta - \bbeta^*\| \geq 4\|\hat \bbeta - \bbeta^*\| $ by assumption which also guarantees that $\|\tilde \bbeta - \hat \bbeta\| \geq \|\tilde \bbeta - \bbeta^*\| - \|\hat \bbeta - \bbeta^*\| \geq \sqrt{p}/n$. 

So when $p/n^2 >\sqrt{p}/\sqrt{n} C p^2/n^2(\log p)^2$ we will always have $\hat \bbeta$ win. That is the same as requiring $p^3 (\log p )^4 \ll n$. Hence we have established that $\hat \bbeta_{\hat \Pi}$ will win over all points outside of a radius $\kappa \sqrt{p}/n$. Therefore the selected point will not be more than $\kappa\sqrt{p}/n$ apart from  $\hat \bbeta_{\hat \Pi}$ which is at most $\lesssim \sqrt{p}/n$ from $\bbeta^*$. The proof is completed by the triangle inequality.
\end{proof}

\begin{remark} We would like to point out that this result remains valid for designs whose entries are i.i.d. centered sub-Gaussian random variables with variances equal to $1$ and sub-Gaussian parameter bounded by some constant $C < \infty$. This implies that the rows of $\Xb$ are i.i.d. sub-Gaussian isotropic random vectors. To see why the theorem extends to this setting, one needs to replace the application of \cite{laurent2000adaptive}'s Lemma 1 with a general sub-exponential bound such as the one offered by \cite{Vershynin2012Introduction}'s Proposition 5.16. In addition, the eigenvalue concentration of the matrix $\hat \bSigma$ can be deduced from Theorem 4.6.1 \citep{vershynin2018high}. The bound on the Chebyshev estimator can be taken from Example \ref{most:important:example}.
\end{remark}

\subsection{Lower bound for the Chebyshev estimator}

In this subsection we prove a general lower bound for the performance of the Chebyshev estimator. We start with a simple lemma on the order statistics of the error terms.

\begin{lemma}\label{lemma:uniform:random:variables:double:concentration}
Let $\{\varepsilon_i\}_{i \in [n]}$ be i.i.d. $U([-a,a])$ random variables. Sort the errors $|\varepsilon_i| \sim U([0,a])$ in decreasing manner $|\bar \varepsilon_{(i)}|$, so that $a \geq |\bar \varepsilon_{(1)}| \geq\ldots \geq |\bar \varepsilon_{(n)}| \geq 0$. Suppose $K \leq n$ is a fixed positive integer. Then 
\begin{align}
    \PP\Big[ \frac{|\bar \varepsilon_{(K)}|}{a} \geq 1 - \frac{K}{2n}\Big] \leq \exp\bigg(\frac{- 3K}{16}\bigg).
\label{bernstein_K:double}
\end{align}
\end{lemma}

\begin{proof}[Proof of Lemma \ref{lemma:uniform:random:variables:double:concentration}]
Consider the inequality 
\begin{align*}
    a-|\bar \epsilon_{(K)}| \leq \theta \quad \Leftrightarrow \quad |\bar \epsilon_{(K)}| \geq a - \theta,
\end{align*}
for some $\theta$. Suppose now $\theta \leq a$. Denote the number of $|\bar \varepsilon_i|$ being in the interval $[a-\theta,a]$ with $Z$. If  $|\bar \varepsilon_{(K)}| \geq a - \theta$ then $Z \geq K$. Since $|\bar \varepsilon_i|\sim U([0,a])$, the probability for an individual $|\bar \varepsilon_i|$ falling into the interval $[a-\theta,a]$ is $\frac{\theta}{a}$. One can see $Z$ follows a binomial distribution $Bin(n, \frac{\theta}{a})$. By (a one-sided) Bernstein's inequality \citep[Theorem 2.8.4]{vershynin2018high} we have
\begin{align*}
    \mathbb{P}\bigg(Z\geq\frac{n\theta}{a}+t\bigg) \leq \exp\bigg(\frac{-t^2/2}{\frac{n\theta}{a}(1-\frac{\theta}{a})+\frac{t}{3}}\bigg) \leq  \exp\bigg(\frac{-t^2/2}{\frac{n\theta}{a}+\frac{t}{3}}\bigg).
\end{align*}
Set $\theta = \frac{Ka}{2n}$ and $t = K/2$. This yields $\frac{n\theta}{a} + t = K$, and
\begin{align*}
\PP(Z \geq K) \leq \exp\bigg(\frac{-t^2/2}{\frac{n\theta}{a}+\frac{t}{3}}\bigg) = \exp\bigg(\frac{- K^2/8}{K/2 + K/6}\bigg) \leq \exp\bigg(\frac{- 3K}{16}\bigg),
\end{align*}
which is what we wanted to show.

\end{proof}

\begin{theorem}\label{Chebyshev:lower:bound:thm} Suppose the the matrix $\Xb$ has i.i.d. standard Gaussian entries. With at least a constant probability we have that the Chebyshev estimator $\hat \bbeta$ satisfies
\begin{align*}
\|\hat \bbeta - \bbeta^*\| \gtrsim a p/(n (\log n)^{3/2}),
\end{align*}
where the inequality $\gtrsim$ hides absolute constant factors.
\end{theorem}

\begin{proof}[Proof of Theorem \ref{Chebyshev:lower:bound:thm}]
Without loss of generality we assume $a = 1$. We have that $\hat a = \min_{\bbeta} \|\bY - \Xb\bbeta \|_{\infty} = \min_{\vb} \max_i |\varepsilon_i + \bX_i\T \vb| = \min_{\vb} \max_{\eb: \|\eb\|_1 \leq 1} \eb\T(\bvarepsilon + \Xb \vb)$. We can then write,
\begin{align*}
\hat a \leq \hat a(R) := \min_{\vb: \|\vb\| \leq  R} \max_{\eb: \|\eb\|_1 \leq 1} \eb\T(\bvarepsilon + \Xb \vb),
\end{align*}
for some $R > 0$. Applying the minimax theorem gives us that 
\begin{align*}
\hat a(R) = \max_{\eb: \|\eb\|_1 \leq 1} \min_{\vb: \|\vb\| \leq  R}  \eb\T(\bvarepsilon + \Xb \vb) = \max_{\eb: \|\eb\|_1 \leq 1} \eb\T\bvarepsilon  - R \|\eb\T \Xb\|
\end{align*}
Taking $R \rightarrow \infty$, shows that $\hat a \leq \max_{\eb: \|\eb\|_1 \leq 1, \eb\T \Xb = 0} \eb\T\bvarepsilon$. Next using Theorem 9.1.1 \citep{vershynin2018high}, we have that 
\begin{align*}
\EE \sup_{\eb: \|\eb\|_1 \leq 1} |\|\eb\T \Xb\| - \sqrt{p}\|\eb\|| \lesssim \sqrt{\log n}, 
\end{align*}
for some absolute constant, where we used that the Gaussian width of the $\ell_1$ ball is $\sqrt{\log n}$ up to constant factors. Hence by Chebyshev's inequality we can claim that with probability at least $.99$ for all $\eb: \|\eb\|_1 \leq 1$ we have $\|\eb\T \Xb\| \geq \sqrt{p}\|\eb\| - C \sqrt{\log n}$ for a sufficiently large absolute constant $C$. It follows that $\hat a \leq \max_{\eb: \|\eb\|_1 \leq 1, \|\eb\| \leq C \sqrt{\log n/p}} \eb\T\bvarepsilon$. 
Let $s$ be the biggest integer smaller than $s \leq [C\sqrt{\log n /p}]^{-2}/4$. Then $\|\eb_S\|_1 \leq \sqrt{s}C \sqrt{\log n/p} \leq 1/2$, where $S$ is the support of the maximal $s$ coefficients of $\eb$ corresponding to the maximal $s$ values of $\bvarepsilon$ (which always needs to be the case due to the rearrangement inequality). By Lemma \ref{lemma:uniform:random:variables:double:concentration} we know that with at least a constant probability,
\begin{align*}
|\bar \varepsilon_{(s + 1)}| & \leq  1 - (s+1)/(2n)  \leq 1 - (p/(4C^2\log n))/(2n) \\
& \leq 1 - \kappa'(p/(n\log n)). 
\end{align*}
Since $\|\bvarepsilon\|_{\infty} \leq 1$ we have $\eb\T \bvarepsilon \leq \|\eb_S\|_1 + (1 - \|\eb_S\|_1)|\bar \varepsilon_{(s + 1)}|  \leq 1 - 1/2 \kappa'(p/n\log n)$. We conclude that
\begin{align*}
\hat a \leq 1 - \kappa''p/(n\log n).
\end{align*}
Now by Lemma \ref{lemma:uniform:random:variables:concentration} we know that with constant probability for $L$ large enough,
\begin{align*}
(1-i(L+1)/n)- ( 1 - \kappa''p/(n\log n))\leq |\bar \varepsilon_{(i)}| - \hat a \leq |\bX_{(i)}\T (\hat \bbeta - \bbeta^*)|,
\end{align*}
and the LHS is positive for the first $\approx \kappa'' p/((L+1)\log n)$ entries, and where $\bX_{(i)}$ are the concomitant $\bX_i$ values for the top order statistics. Squaring and adding these inequalities yields, 
\begin{align}
\sum_{i < \kappa'' p/((L+1)\log n)} (\kappa''p/(n\log n) - i(L+1)/n)^2 & \leq (\hat \bbeta - \bbeta^*)\T \sum \bX_{(i)} \bX_{(i)}\T(\hat \bbeta - \bbeta^*) \nonumber \\
& \leq (p/((L+1)\log n) + p + p) \|\hat \bbeta  - \bbeta^*\|^2 \label{upper:bound:for:beta},
\end{align}
with probability at least $1-2\exp(cp^2)$ for some absolute constant $c$, where we used Corollary 7.3.3 of \cite{vershynin2018high}. On the other hand we have
\begin{align*}
\sum_{i < \kappa'' p/((L+1)\log n)} (\kappa''p/(n\log n) - i(L+1)/n)^2 & \geq \sum_{i =1}^{\lfloor \kappa'' p/((L+1)\log n) \rfloor} (\bar i(L+1)/n - i(L+1)/n)^2\\
&= \frac{(L+1)^2}{n^2}\bigg[\sum_{i =1}^{\lfloor \kappa'' p/((L+1)\log n) \rfloor} (i^2 - \bar i^2)\bigg]\\
& \gtrsim \frac{(L+1)^2}{n^2} \lfloor \kappa'' p/((L+1)\log n) \rfloor^3,
\end{align*}
where $\bar i =  \sum_{i =1}^{\lfloor \kappa'' p/((L+1)\log n) \rfloor} i/\lfloor \kappa'' p/((L+1)\log n) \rfloor = (\lfloor \kappa'' p/((L+1)\log n) \rfloor+1)/2$. Dividing \eqref{upper:bound:for:beta} by $3p$ yields that 
\begin{align*}
\|\hat \bbeta - \bbeta^*\| \gtrsim p/(n (\log n)^{3/2}),
\end{align*}
with at least constant probability. 
\end{proof}

\begin{remark} The proof remains valid if one substitutes the entries of the design matrix $\Xb$ with i.i.d. centered sub-Gaussian random variables with variances equal to $1$ and sub-Gaussian parameter bounded by some $C < \infty$. This implies that the rows and columns of $\Xb$ are i.i.d. sub-Gaussian isotropic random vectors. \eqref{upper:bound:for:beta} needs to be replaced with the eigenvalue concentration of the matrix $\hat \bSigma$ which can be deduced from Theorem 4.6.1 \citep{vershynin2018high}.

\end{remark}

\end{document}